\documentclass[12pt,reqno,fleqn]{amsart}  
\usepackage{tikz}
\usepackage{amsmath,amstext,amsthm,amssymb,amsxtra}
\usepackage[top=1.5in, bottom=1.5in, left=1.25in, right=1.25in]	{geometry}
\usepackage{lmodern}

\usepackage{mathtools}
\mathtoolsset{showonlyrefs,showmanualtags}

\usepackage{hyperref} 
\hypersetup{
	colorlinks=true,       
	linkcolor=blue,          
	citecolor=magenta,        
	filecolor=magenta,      
	urlcolor=cyan           
}

\usepackage[msc-links]{amsrefs}


\newtheorem{theorem}{Theorem}[section]
\newtheorem{proposition}{Proposition}[section]
\newtheorem{lemma}{Lemma}[section]
\newtheorem{corollary}{Corollary}[section]
\newtheorem{OldTheorem}{Theorem}
\theoremstyle{definition}
\newtheorem{definition}{Definition}[section]
\theoremstyle{definition}

\theoremstyle{remark}
\newtheorem{remark}{Remark}[section]

\def\spec{{\rm spec}}
\def\sign{{\rm sign\,}}
\def\pv{{\rm p.v. }}

\def\OSC{{\rm OSC}}

\def\supp{{\rm supp\,}}

\def\dist{{\rm dist}}

\def\ZZ{\ensuremath{\mathbb Z}}
\def\ZI{\ensuremath{\textbf 1}}
\def\x{\ensuremath{\textbf x}}
\def\u{\ensuremath{\textbf u}}
\def\t{\ensuremath{\textbf t}}
\def\h{\ensuremath{\textbf h}}
\def\n{\ensuremath{\textbf n}}
\def\ZN{\ensuremath{\mathbb N}}

\def\ZP{\ensuremath{\mathcal P}}

\def\ZK{\ensuremath{\mathcal K}}
\def\ZR{\ensuremath{\mathbb R}}

\def\ZT{\ensuremath{\mathbb T}}

\def\Re{\ensuremath{\mathrm {Re }}}
\def\Im{\ensuremath{\mathrm {Im }}}

\numberwithin{equation}{section}
\def\md#1#2\emd{\ifx0#1
	\begin{equation*} #2 \end{equation*}\fi  
	\ifx1#1\begin{equation}#2\end{equation}\fi   
	\ifx2#1\begin{align*}#2\end{align*}\fi   
	\ifx3#1\begin{align}#2\end{align}\fi    
	\ifx4#1\begin{gather*}#2\end{gather*}\fi  
	\ifx5#1\begin{gather}#2\end{gather}\fi   
	\ifx6#1\begin{multline*}#2\end{multline*}\fi  
	\ifx7#1\begin{multline}#2\end{multline}\fi  
	\ifx8#1\begin{multline*}\begin{split}#2\end{split}\end{multline*}\fi
	\ifx9#1\begin{multline}\begin{split}#2\end{split}\end{multline}\fi
}
\newcommand {\e }[1]{\eqref{#1}}
\newcommand {\lem }[1]{Lemma \ref{#1}}

\newcommand {\cor }[1]{Corollary \ref{#1}}
\newcommand {\pro }[1]{Proposition \ref{#1}}
\newcommand {\trm }[1]{Theorem \ref{#1}}

\title[] {On Weyl multipliers of the rearranged trigonometric system}
\author{Grigori A. Karagulyan}
\address{Faculty of Mathematics and Mechanics, Yerevan State
	University, Alex Manoogian, 1, 0025, Yerevan, Armenia} 
\email{g.karagulyan@ysu.am}
\address{Institute of Mathematics of NAS of RA, Marshal Baghramian ave., 24/5, Yerevan, 0019, Armenia} 
\email{g.karagulyan@gmail.com}

\subjclass[2010]{42C05, 42C10, 42C20}
\keywords{Trigonometric series, Weyl multipliers, Menshov-Rademacher theorem}
	\begin{document}
\begin{abstract}
We prove that the condition
\begin{equation}
\sum_{n=1}^\infty\frac{1}{nw(n)}<\infty
\end{equation}
is necessary for an increasing sequence of numbers $w(n)$ to be an almost everywhere unconditional convergence Weyl multiplier for the trigonometric system. This property for Haar, Walsh,  Franklin and some other classical orthogonal systems was known long ago. The proof of this result is based on a new sharp logarithmic lower bound on $L^2$ of the majorant operator related to the rearranged trigonometric system.
\end{abstract}

	\maketitle  
\section{Introduction}
	Let $\Phi=\{\phi_n:\, n=1,2,\ldots\}\subset L^2(0,1)$ be an orthonormal system. Recall that a sequence of positive numbers $w(n)\nearrow\infty$ is said to be an a.e. convergence Weyl multiplier (shortly C-multiplier) if every series 
	\begin{equation}\label{a1}
	\sum_{n=1}^\infty a_n\phi_n(x),
	\end{equation}
	with coefficients satisfying the condition 
	\begin{equation}\label{a3}
	\sum_{n=1}^\infty a_n^2w(n)<\infty
	\end{equation}
	is a.e. convergent (see \cite{KaSa} or \cite{KaSt}). 
	The Menshov-Rademacher classical theorem (\cite{Men},  \cite{Rad}) states that the sequence $\log^2 n$ is a C-multiplier for any orthonormal system. The sharpness of $\log^2 n$ in this statement was established by Menshov in the same paper \cite{Men}, proving that any sequence $w(n)=o(\log^2n)$ fails to be C-multiplier for some  orthonormal system. The following definitions are well known in the theory of orthogonal series. 
	\begin{definition}
	A sequence of positive numbers $w(n)\nearrow\infty$ is said to be an a.e. convergence Weyl multiplier for the rearrangements (RC-multiplier) of an orthonormal system $\Phi$ if it is C-multiplier for any rearrangement of $\Phi$.
	\end{definition}
\begin{definition}
	A sequence of positive numbers $w(n)\nearrow\infty$ is said to be an a.e. unconditional convergence Weyl multiplier (UC-multiplier) for an orthonormal system $\Phi$ if  under the condition \e{a3} the series \e{a1} converge a.e. after any rearrangement of its terms.
\end{definition}
For a given orthonormal system $\Phi$, we denote by ${\rm RC}(\Phi)$ and ${\rm UC}(\Phi)$ the families of RC and UC multipliers, respectively. Observe that according to the Menshov-Rademacher theorem we have $\log^2 n\in {\rm RC}(\Phi)$ for any orthonormal system $\Phi$ and the counterexample of Menshov tells us that $\log^2 n$ is optimal in this statement. The following two theorems provide a necessary and sufficient condition for a sequence to be UC-multiplier for all orthonormal systems. Namely,
\begin{OldTheorem}[Orlicz, \cite{Orl}]
If an increasing sequence of positive numbers $\lambda(n)$ satisfies
	\begin{equation}\label{d3}
\sum_{n=1}^\infty\frac{1}{n\lambda(n)\log n}<\infty,
\end{equation} 
then $w(n)=\lambda(n)\log^2n$ is a UC-multiplier for any orthonormal system.
\end{OldTheorem}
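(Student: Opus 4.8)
\emph{Proposal.} Fix an arbitrary permutation $\sigma$ of $\mathbb{N}$; the goal is to show that $\sum_k a_{\sigma(k)}\phi_{\sigma(k)}$ converges a.e. (we may discard the single bounded term $a_1\phi_1$, so assume $a_1=0$). Since $\sum_k a_{\sigma(k)}^2=\sum_n a_n^2<\infty$ and $\{\phi_{\sigma(k)}\}$ is orthonormal, the partial sums $S_n:=\sum_{k\le n}a_{\sigma(k)}\phi_{\sigma(k)}$ are Cauchy in $L^2$, hence $S_n\to f$ in $L^2$ with $f=\sum_n a_n\phi_n$ (the $L^2$-limit is the same for every rearrangement). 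The reason the Menshov--Rademacher theorem cannot be applied in the new order is that the weight at position $k$ is $w(\sigma(k))$, which is neither monotone nor even unbounded in general; the remedy is to keep the \emph{original} ordering, where $w$ is increasing, and to track how each block of original indices gets absorbed by $S_n$.

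First I would block the original index set along the \emph{super-lacunary} scale $\rho_j=2^{2^j}$ ($\rho_0:=1$): put $D_j=[\rho_{j-1},\rho_j)\cap\mathbb{N}$, $\alpha_j=\sum_{m\in D_j}a_m^2$, and let $\mu_J=\max\{\sigma^{-1}(m):m<\rho_J\}$ be the position by which every original index below $\rho_J$ has already appeared; $\mu_J$ is finite, nondecreasing in $J$, and $\mu_J\to\infty$. For $n\ge\mu_J$ no index of a block $D_j$ with $j<J$ survives past position $n$, so
\[
f-S_n=\sum_{m:\ \sigma^{-1}(m)>n}a_m\phi_m=\sum_{j\ge J}Q^{(j)}_n,\qquad Q^{(j)}_n:=\sum_{\substack{m\in D_j\\ \sigma^{-1}(m)>n}}a_m\phi_m .
\]
Hence, with $R_j:=\sup_n|Q^{(j)}_n|$, the tail oscillation obeys $G_J(x):=\sup_{n\ge\mu_J}|f(x)-S_n(x)|\le\sum_{j\ge J}R_j(x)$. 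As $n$ varies, $\{m\in D_j:\sigma^{-1}(m)>n\}$ ranges over the final segments of $D_j$ listed by increasing $\sigma^{-1}$, so $R_j\le 2\sup_i\bigl|\sum_{l\le i}a_{m_l}\phi_{m_l}\bigr|$, where $m_1,m_2,\dots$ enumerate $D_j$ in that order; the classical Rademacher--Menshov maximal inequality then gives
\[
\|R_j\|_2\ \le\ C\,\log(|D_j|)\,\alpha_j^{1/2}\ \le\ C\,2^j\,\alpha_j^{1/2},
\]
since $|D_j|<\rho_j=2^{2^j}$.

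Now sum over $j$. As $w$ is increasing, $w(m)\ge w(\rho_{j-1})=\lambda(2^{2^{j-1}})\,4^{j-1}(\log 2)^2$ for $m\in D_j$, so $\sum_j\lambda(2^{2^{j-1}})\,4^{j-1}\alpha_j\le(\log2)^{-2}\sum_n a_n^2 w(n)<\infty$, and Cauchy--Schwarz yields
\[
\sum_j 2^j\alpha_j^{1/2}\ \le\ \Bigl(4\sum_{k\ge0}\frac1{\lambda(2^{2^k})}\Bigr)^{1/2}\Bigl(\sum_j\lambda(2^{2^{j-1}})\,4^{j-1}\alpha_j\Bigr)^{1/2}.
\]
The first factor is finite \emph{precisely} by the hypothesis: Cauchy condensation applied twice gives $\sum_n\frac1{n\lambda(n)\log n}<\infty\iff\sum_i\frac1{i\,\lambda(2^i)}<\infty\iff\sum_k\frac1{\lambda(2^{2^k})}<\infty$. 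Therefore $\sum_j\|R_j\|_2<\infty$, the function $H:=\sum_{j\ge1}R_j$ lies in $L^2$, $G_J\le H$ for all $J\ge1$, and $\|G_J\|_2\le\sum_{j\ge J}\|R_j\|_2\to0$. Since $G_J$ is nonincreasing in $J$ and dominated by $H\in L^2$, it follows that $G_J\downarrow 0$ a.e.; because $\mu_J\to\infty$ this means $\limsup_{n\to\infty}|f-S_n|=0$ a.e., i.e. $S_n\to f$ a.e. As $\sigma$ and $\Phi$ were arbitrary, $w$ is a UC-multiplier for every orthonormal system.

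The one genuinely delicate point — and where the exact strength of the condition is used — is the choice of block scale. With ordinary dyadic blocks the maximal inequality costs a factor $\log 2^j\asymp j$, and $\sum_j j\,\alpha_j^{1/2}$ need not converge under $\sum\frac1{n\lambda(n)\log n}<\infty$ alone; the double-exponential scale $\rho_j=2^{2^j}$ is exactly the one for which the logarithmic loss in Rademacher--Menshov matches the iterated-logarithm strength of the Orlicz condition through the two condensations above. Everything else — the maximal inequality, the condensation identities, and the monotone/dominated-convergence endgame — is routine.
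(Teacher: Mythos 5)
Your argument is correct, and it is essentially the classical proof of Orlicz's theorem (the paper itself only \emph{cites} this statement as Theorem~A and does not reprove it). The key ingredients — blocking the \emph{original} indices on the super-lacunary scale $\rho_j=2^{2^j}$ so that the Menshov--Rademacher loss $\log|D_j|\lesssim 2^j$ is matched by the weight $w(\rho_{j-1})\gtrsim\lambda(2^{2^{j-1}})4^{j}$, observing that for any rearrangement $\sigma$ each tail $\{m\in D_j:\sigma^{-1}(m)>n\}$ is a final segment in the $\sigma^{-1}$-order so that $R_j$ is controlled by a single Menshov--Rademacher maximal function, and then closing with Cauchy--Schwarz plus the double condensation $\sum 1/(n\lambda(n)\log n)<\infty\iff\sum_k 1/\lambda(2^{2^k})<\infty$ — are exactly the standard ones. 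A useful observation worth recording is that your argument never uses anything about $\log^2 n$ beyond the Menshov--Rademacher maximal inequality within each block; if one replaces it by the maximal inequality that \emph{defines} an arbitrary RC-multiplier $w$ and re-runs the same block/Cauchy--Schwarz/condensation scheme, one obtains the stronger Ul\cprime yanov--Poleshchuk Theorem~C of the paper, of which Orlicz is the special case $w=\log^2 n$. Two cosmetic points: the boundary block $j=1$ (where $\rho_0=1$ gives $w(\rho_0)=0$) should be set aside as a finite exceptional term rather than fed into the Cauchy--Schwarz inequality, and the a.e.\ statement $G_J\downarrow 0$ follows already from the pointwise bound $G_J\le\sum_{j\ge J}R_j$ together with the a.e.\ convergence of $\sum_j R_j$ — the appeal to domination by $H$ is not needed and, on its own, would not suffice.
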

\begin{OldTheorem}[Tandori, \cite{Tan}]
	If an increasing sequence of positive numbers $\lambda(n)$ doesn't satisfy \e{d3},  then there exists an orthonormal system for which the sequence $w(n)=\lambda(n)\log^2n$ fails to be a UC-multiplier.
\end{OldTheorem}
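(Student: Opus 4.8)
This is a classical counterexample construction (the exact dual of the Orlicz theorem quoted above), and I would proceed by the block method. Fix $\lambda(n)\nearrow$ violating \eqref{d3}; passing to dyadic scales this reads $\sum_{j}\frac1{j\lambda(2^j)}=\infty$, with $\lambda_j:=\lambda(2^j)\nearrow\infty$. The goal is to produce an orthonormal system $\Phi=\{\phi_n\}\subset L^2(0,1)$, a sequence $(a_n)$ with $\sum_n a_n^2\lambda(n)\log^2 n<\infty$, and a single permutation $\sigma$ of $\mathbb N$ for which $\sum_n a_{\sigma(n)}\phi_{\sigma(n)}$ diverges on a set of positive measure; this contradicts the defining property of a UC-multiplier. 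I would build $\Phi$ from consecutive index blocks $B_j$ of length $n_j=2^j$ (so $B_j$ occupies indices of order $2^j$), putting on the indices of $B_j$ a scaled copy, of amplitude $\beta_j$ (i.e.\ $\ell^2$-norm of the coefficients on $B_j$ equal to $\beta_j$), of a finite orthonormal system that is ``as bad as possible for rearrangements''; the different copies are realized on independent coordinates under a measure-preserving identification $(0,1)\cong\prod_j(0,1)$, which keeps $\Phi$ orthonormal and makes distinct blocks stochastically independent.

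The building block is the sharpness half of the Menshov--Rademacher theorem, in pointwise form: for every $N$ there is an orthonormal system $\{\psi_1,\dots,\psi_N\}$ on $(0,1)$, coefficients with $\sum_k c_k^2=1$, and an ordering of the $\psi_k$ along which the maximal partial sum is $\gtrsim\log N$ on a subset of $(0,1)$ whose measure is bounded below by an absolute constant. Using this for $B_j$ with $N=n_j=2^j$, the rearranged partial sums, as the permutation sweeps through $B_j$ in that order, undergo an excursion of size $\gtrsim\beta_j\log n_j\asymp\beta_j\, j$. The permutation $\sigma$ I would actually use does \emph{not} process the blocks one after another, but interleaves the initial segments of the blocks so that, at a suitable moment, the excursions of $B_1,\dots,B_J$ are all ``in progress'' simultaneously and add up; controlling the signs so that these contributions reinforce rather than cancel on a non-null set is the heart of the construction. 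Granting this, the partial sums of the rearranged series are unbounded in $x$ on a positive-measure set — by a Borel--Cantelli/zero--one argument using the independence of the blocks — precisely when $\sum_j\beta_j\, j=\infty$, while the Weyl condition to be respected, since $w(n)=\lambda(n)\log^2 n\asymp\lambda_j\, j^2$ on $B_j$, is $\sum_j \beta_j^2\lambda_j\, j^2<\infty$.

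So everything reduces to the scalar problem: choose $\beta_j>0$, equivalently $d_j:=\beta_j\, j>0$, with $\sum_j d_j=\infty$ and $\sum_j d_j^2\lambda_j<\infty$. This is exactly where the failure of \eqref{d3} is used, and it is, I expect, the cleanest way to see why the Orlicz exponent is sharp: take $d_j=\big(j\lambda_j\big)^{-1/2}(\log j)^{-1}$. Then $\sum_j d_j^2\lambda_j=\sum_j\frac1{j\log^2 j}<\infty$, whereas for all large $j$ one has $\log^2 j\le j\lambda_j$, hence $\log j\le (j\lambda_j)^{1/2}$ and $d_j\ge \frac1{j\lambda_j}$, so $\sum_j d_j\ge\sum_j\frac1{j\lambda_j}=\infty$. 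With these $\beta_j$ in place, the remaining work — and the main obstacle — is the measure-theoretic part of the second paragraph: turning ``$\sum_j d_j=\infty$'' into genuine a.e.\ (or positive-measure) divergence of the rearranged series, which requires a careful choice of the interleaving permutation, a uniform form of the pointwise lower bounds for the building blocks, and an estimate ruling out destructive interference between different blocks. The necessity direction for the trigonometric system, which is the novelty of the present paper, is harder still, since there one cannot design the blocks freely but must extract such bad behaviour from dilated arithmetic progressions of characters $e^{2\pi i k x}$.
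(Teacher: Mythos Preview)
The paper does not prove this statement: Tandori's theorem is quoted as background (Theorem~B, cited to \cite{Tan}) and no argument for it appears anywhere in the text. So there is no ``paper's own proof'' to compare your attempt against.

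That said, your outline is the classical one and the scalar reduction is carried out correctly. Passing to dyadic scales, choosing Menshov blocks $B_j$ of size $2^j$ on independent coordinates, and seeking amplitudes $\beta_j$ with $\sum_j(\beta_j j)^2\lambda_j<\infty$ but $\sum_j\beta_j j=\infty$ is exactly right, and your choice $d_j=\beta_j j=(j\lambda_j)^{-1/2}(\log j)^{-1}$ does the job: $\sum_j d_j^2\lambda_j=\sum_j(j\log^2 j)^{-1}<\infty$, while $\lambda_j\ge\lambda_1>0$ forces $\log^2 j\le j\lambda_j$ eventually, whence $d_j\ge (j\lambda_j)^{-1}$ and $\sum_j d_j=\infty$ by the failure of~\eqref{d3}. (One may assume $\lambda\ge1$ without loss, so that also $\sum_j d_j^2<\infty$, which you will want for the Borel--Cantelli step.)

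The part you flag as ``the main obstacle'' is indeed the crux, and your description of it is a bit too optimistic. The difficulty is that the stopping index $m_j(x)$ at which the $j$-th Menshov block achieves its excursion depends on $x$, so one cannot simply freeze all blocks ``in progress'' with a single permutation; a naive pigeonhole on $m_j$ loses a factor $1/n_j$ in measure, which is fatal. What actually works is a \emph{hierarchical} (not parallel) interleaving: one nests the $(k{+}1)$-th block inside the $k$-th at the position dictated by the stopping time, after first discretising the stopping time so that it is constant on fine subintervals. The paper carries out precisely this manoeuvre in its proof of \cor{C2} (Section~10): see the passage around \eqref{u34}--\eqref{u28}, where \lem{L16} is used to freeze the stopping index on intervals of the next scale, and the recursive placement ``immediately after $U_{l-1,\bar j,m}$'' realises the interleaving. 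For Tandori's theorem the same scheme works and is in fact easier, since the Menshov blocks can be taken to be step functions whose oscillation control is trivial; combined with \lem{L15} (applied to the independent events $E_j$ and weights $d_j$), this closes the gap you identify.
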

In particular, these results imply that the sequence $\log^2n(\log\log n)^{1+\varepsilon}$, $\varepsilon >0$ is an UC-multiplier for any orthonormal system, while $\log^2n\log\log n$ is not a UC-multiplier for some orthonormal systems.

The study of RC and UC multipliers of classical orthonormal systems is an old issue in the theory of orthogonal series. It is well known that the sequence $w(n)\equiv 1$ is a C-multiplier for trigonometric, Walsh, Haar and Franklin systems, while it fails to be RC-multiplier for those systems. Kolmogorov \cite{Kol} was the first who remarked that the sequence $w(n)\equiv 1$ is not RC-multiplier for the trigonometric system. However, he has never published the proof of this fact. A proof of this assertion was later given by Zahorski \cite{Zag}. Afterward, developing Zahorski's argument, Ul\cprime yanov \cite{Uly6, Uly7} established such a property for Haar and Walsh systems. Using the Haar functions technique, Olevskii \cite{Ole} succeed proving that such a phenomenon is common for arbitrary complete orthonormal system.

Later on Ul\cprime yanov \cite{Uly1, Uly4} found the optimal growth of the RC and UC multipliers of Haar system. Moreover, his  technique of the proof became a key argument in the study of the analogous problems for other classical systems.
Namely, 
\begin{OldTheorem}[Ul\cprime yanov, \cite{Uly3}]\label{OT1}
	The sequence $\log n$ is an RC-multiplier for the Haar system and any sequence $w(n)=o(\log n)$ is not.
\end{OldTheorem}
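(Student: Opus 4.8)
The statement has two halves: that $\log n$ is an RC-multiplier for the Haar system $\ZH=\{h_n\}$ (briefly, $\log n\in{\rm RC}(\ZH)$), and that no sequence $w(n)=o(\log n)$ is. I would prove them separately, the pivot in each case being a Menshov--Rademacher-type maximal inequality for rearranged partial sums of Haar series, but with the logarithm entering to the power $\tfrac12$ rather than to the power $1$ granted by the general theory.

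\emph{Upper bound.} The plan is to isolate the inequality
\begin{equation}\label{e-planmax}
\Big\|\max_{1\le k\le N}\Big|\sum_{j=1}^{k}c_j h_{\sigma(j)}\Big|\Big\|_{L^2(0,1)}\ \le\ C\,\sqrt{\log(N+1)}\ \Big(\sum_{j=1}^{N}c_j^2\Big)^{1/2},
\end{equation}
valid for every permutation $\sigma$ of $\ZN$, every $N$, and all $c_j\in\ZR$, and then to feed \eqref{e-planmax} into the classical dyadic blocking argument. Indeed, assume $\sum_n a_n^2\log n<\infty$ and set $\beta_m=\sum_{2^m\le n<2^{m+1}}a_n^2$, so $\sum_m m\,\beta_m<\infty$. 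Applying \eqref{e-planmax} to the $2^m$ Haar functions $h_{\sigma(n)}$ with $2^m\le n<2^{m+1}$ bounds the in-block oscillation $D_m=\max_{2^m\le k<2^{m+1}}\big|\sum_{n=2^m}^{k}a_n h_{\sigma(n)}\big|$ by $\|D_m\|_2^2\le C(1+m)\beta_m$, whence $\sum_m\|D_m\|_2^2<\infty$ and $D_m\to0$ a.e.; the dyadic subsequence $S_{2^m}$ converges a.e. by the ordinary Menshov--Rademacher theorem applied to the mutually orthogonal block functions $B_m=\sum_{2^m\le n<2^{m+1}}a_n h_{\sigma(n)}$, since $\sum_m\|B_m\|_2^2\log^2(m+1)=\sum_m\beta_m\log^2(m+1)\le C\sum_m(1+m)\beta_m<\infty$. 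Together these give a.e.\ convergence of $\sum_n a_n h_{\sigma(n)}$, hence $\log n\in{\rm RC}(\ZH)$.

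\emph{Proving \eqref{e-planmax}} is the heart of the matter, and here the dyadic structure of the Haar system has to be used in an essential way: at a fixed point $x$ only those $h_{\sigma(j)}$ whose supporting dyadic interval contains $x$ affect the partial sums, these intervals form a single nested chain, and along such a chain every sufficiently coarse Haar function is constant on every finer interval. A plain telescoping of $\max_k|S_k|^2$ followed by Cauchy--Schwarz is too wasteful --- it over-counts along long nested chains and returns only a power of $N$ --- so the argument must split each partial sum, on the support of the newly adjoined Haar function, into a coarse part (constant there, hence built from only $O(\log)$ terms) and a fine, orthogonal part, and balance the two so that exactly one power of $\sqrt{\log N}$ survives, uniformly over all rearrangements and over all families of distinct Haar functions, including very deeply nested ones. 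I expect this bookkeeping to be the main obstacle.

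\emph{Sharpness.} For the lower bound, fix $w(n)=o(\log n)$; the plan is a condensation-of-singularities construction producing one permutation $\sigma$ of $\ZH$ and one sequence $(a_n)$ with $\sum_n a_n^2 w(n)<\infty$ whose partial sums diverge on a set of positive measure. The building brick is a finite family of dyadic intervals, with a choice of signs, magnitudes and --- crucially --- an adversarial order of enumeration, for which \eqref{e-planmax} is essentially attained: normalised to unit $\ell^2$-mass it is a block of $N$ Haar functions whose rearranged partial-sum maximal function is $\gtrsim\sqrt{\log N}$ on a set of measure bounded below. (Constructing this brick --- forcing the partial-sum overshoot along many nested chains at once, as far as a single global ordering permits --- is the combinatorial core of this half, dual to the difficulty in \eqref{e-planmax}.) Scaling down the $j$-th brick to have $\ell^2$-mass $\asymp 1/m_j$ and inserting it at positions $n\asymp 2^{m_j}$ (where $w(2^{m_j})=o(m_j)$) keeps its in-block oscillation bounded below while contributing only $\asymp w(2^{m_j})/m_j$ to $\sum_n a_n^2 w(n)$; choosing the scales $m_j$ to increase fast enough makes $\sum_j w(2^{m_j})/m_j<\infty$. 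Concatenating the bricks along $\sigma$, separated by blocks of zero coefficients, yields $\sum_n a_n h_{\sigma(n)}$ with $\sum_n a_n^2 w(n)<\infty$ whose partial sums oscillate by a fixed positive amount infinitely often on a set of measure bounded below, hence diverge there; so $w$ is not a C-multiplier for this $\sigma$, i.e.\ $w\notin{\rm RC}(\ZH)$.
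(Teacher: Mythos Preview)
This theorem is not proved in the paper; it is quoted as a classical result of Ul\cprime yanov and attributed to \cite{Uly3} (see also \cite{Uly1,Uly4}). There is therefore no proof here against which to compare your proposal.

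On its own merits, your outline is pointed in the right direction: both halves of Ul\cprime yanov's theorem do reduce to a sharp maximal inequality of the form you state --- a $\sqrt{\log N}$ bound for the rearranged partial-sum maximal function of $N$ Haar functions --- and the dyadic-blocking deduction of $\log n\in{\rm RC}(\ZH)$ from that inequality is carried out correctly. But the proposal is a plan, not a proof. You explicitly flag the two substantive steps --- establishing the $\sqrt{\log N}$ bound for an \emph{arbitrary} finite family of Haar functions in an arbitrary order, and constructing a family and an ordering that saturate it on a set of uniformly positive measure --- as ``the main obstacle'' and ``the combinatorial core'', and then defer both. These are precisely where the Haar-specific structure must be exploited, and neither is routine: your own remark that naive Cauchy--Schwarz along the nested chain at each point is too wasteful is correct (for a deeply nested family it returns only the trivial bound), and passing from the generic $\log N$ of Menshov--Rademacher down to $\sqrt{\log N}$ requires a genuinely sharper argument using the tree geometry of dyadic intervals. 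Likewise, the saturating brick is a concrete combinatorial construction, not an abstract existence statement. For the actual arguments see \cite{Uly1,Uly3,Uly4}; a textbook treatment is in \cite{KaSa}.
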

\begin{OldTheorem}[Ul\cprime yanov, \cite{Uly3}]\label{OT2}
	The sequence $w(n)$ is a UC-multiplier for the Haar system if and only if it holds the bound
	\begin{equation}\label{a4}
	\sum_{n=1}^\infty\frac{1}{nw(n)}<\infty.
	\end{equation}
\end{OldTheorem}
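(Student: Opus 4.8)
\medskip
\noindent\emph{Proof strategy.}\; I would treat the two implications separately, relying throughout on the dyadic structure of the Haar system. Index the Haar functions as $\chi_{j,k}$, $j\ge 0$, $0\le k<2^{j}$ (together with the constant): $\chi_{j,k}$ is supported on $I_{j,k}=[k2^{-j},(k+1)2^{-j})$ and equals $2^{j/2}$ on its left half and $-2^{j/2}$ on its right half; write $g_{j}=\sum_{k}a_{j,k}\chi_{j,k}$ for the $j$-th generation block, so that the $g_{j}$ are mutually orthogonal, $|g_{j}(x)|=|a_{j,k_{j}(x)}|\,2^{j/2}$ with $k_{j}(x)=\lfloor 2^{j}x\rfloor$, and $\sum_{j}\|g_{j}\|_{2}^{2}=\sum_{n}a_{n}^{2}$. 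The observation that drives everything is that, since the functions in one generation are disjointly supported, any partial sum of $\{a_{j,k}\chi_{j,k}\}_{k}$ takes at a point $x$ only the value $0$ or $a_{j,k_{j}(x)}\chi_{j,k_{j}(x)}(x)$; consequently, for any rearrangement $\sigma$ of the Haar system, $\sup_{N}|S^{\sigma}_{N}(x)|=\sup_{m}\big|\sum_{\ell\le m}g_{\rho_{x}(\ell)}(x)\big|$, where $\rho_{x}$ orders the generations by the time at which $\chi_{j,k_{j}(x)}$ is reached in the list $\sigma(1),\sigma(2),\dots$. Thus the rearranged Haar maximal function is, pointwise, a maximal partial sum of the orthogonal series $\sum_{j}g_{j}$ reordered by an $x$-dependent permutation which can be no worse than a genuine reordering of one orthogonal function per generation.

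\emph{Sufficiency.} Assume \e{a4}. Split the coefficients into a finite head (whose every rearrangement converges everywhere) and a tail $g$ with $\sum a_{n}^{2}w(n)$ as small as we please; it then suffices to prove a weak-type bound $\big|\{\sup_{N}|S^{\sigma}_{N}g|>\lambda\}\big|\lesssim\lambda^{-2}\sum_{n}a_{n}^{2}w(n)$ with constant independent of $\sigma$, i.e.\ to estimate $\int\sup_{m}\big|\sum_{\ell\le m}g_{\rho_{x}(\ell)}(x)\big|^{2}\,dx$. Two features of the Haar system are exploited here, which is why the comparatively small weight $w$ suffices in place of the $\log^{2}n$ needed for general orthonormal systems (Orlicz): reordering inside a generation is free, and $\sigma$ is forced to distribute the $2^{j}$ functions of generation $j$ over $2^{j}$ distinct positions, so that after integration in $x$ the generations enter in a ``typical'' rather than adversarial order. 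Applying the Menshov--Rademacher $L^{2}$ maximal inequality over the generation blocks with these two economies, and summing the per-block estimates by a Cauchy--Schwarz/Abel argument in which the increasing weight $w$ is weighed against the logarithmic Menshov--Rademacher factors, yields the desired bound precisely under $\sum_{n}1/(nw(n))<\infty$. Combined with convergence on the head, this gives a.e.\ convergence of every rearrangement.

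\emph{Necessity.} Assume $\sum_{n}1/(nw(n))=\infty$. Using that $w\nearrow\infty$, choose generations $0=p_{0}<q_{0}=p_{1}<q_{1}=p_{2}<\cdots$ with $q_{s}$ increasing very fast, such that $\sum_{2^{p_{s}}\le n<2^{q_{s}}}1/(nw(n))\ge 1$ for every $s$; since $w(n)\le w(2^{q_{s}})$ on this range, the $s$-th block of generations has length $L_{s}=q_{s}-p_{s}\gtrsim w(2^{p_{s}})$, while $w(2^{q_{s}})=o(q_{s}^{2})$. In the generations $p_{s}\le j<q_{s}$ I would plant a Haar realization of the basic block of Menshov's sharpness example for the Menshov--Rademacher theorem: using \emph{all} $M_{s}\approx 2^{q_{s}}$ Haar functions of these generations, with a common coefficient modulus $\delta_{s}/\sqrt{M_{s}}$ and suitably chosen signs, together with an internal ordering for which the partial sums within the block reach absolute value $\gtrsim\delta_{s}\log M_{s}\approx\delta_{s}q_{s}$ on a set $E_{s}$ of measure bounded below by an absolute constant $c_{0}$. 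Taking $\delta_{s}\approx 1/q_{s}$ makes the block-$s$ excursion of height $\ge 1$ while the weighted mass contributed by the block is $\delta_{s}^{2}w(2^{q_{s}})\approx w(2^{q_{s}})/q_{s}^{2}$, which is summable once the $q_{s}$ grow fast enough; hence $\sum_{n}a_{n}^{2}w(n)<\infty$. Splicing the internal orderings of the successive blocks into a single rearrangement of the Haar system, the partial sums of the rearranged series make an excursion of height $\ge 1$ during block $s$ for every $x\in E_{s}$; since $E_{s}$ depends only on the binary digits of $x$ resolved by the generations $p_{s}\le j<q_{s}$, and distinct blocks use disjoint ranges of generations, the events $E_{s}$ are independent and each of measure $\ge c_{0}$, so by the Borel--Cantelli lemma almost every $x$ lies in infinitely many $E_{s}$, and for such $x$ the partial sums fail to converge; thus $w$ is not a UC-multiplier.

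\emph{Main difficulty.} The heart of the matter is the necessity construction, and precisely the step of realizing, inside a prescribed block of consecutive dyadic generations, the \emph{optimal} Menshov gain (logarithmic in the number of functions used, not merely its square root) on a set whose measure does not shrink with the block. This is exactly what pins the threshold at $\sum 1/(nw(n))<\infty$: the cheap ``geometric'' divergence built from nested Haar functions (partial sums of size $\sim 2^{L/2}$) costs almost nothing in weighted $\ell^{2}$ but occurs only on a set of measure $\sim 2^{-L}$, far too small for the Borel--Cantelli step, so all the work goes into obtaining a large bad set at the efficient rate.
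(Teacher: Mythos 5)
The paper does not prove this statement: Theorem~B is cited as a known result of Ul\cprime yanov (\cite{Uly3}) and serves only as background for the trigonometric-system results, so there is no in-paper proof to compare against. Your sketch must therefore be judged on its own terms, and the necessity half contains a genuine error.

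You propose to ``plant a Haar realization of the basic block of Menshov's sharpness example'' using \emph{all} $M_s\approx 2^{q_s}$ Haar functions of generations $p_s\le j<q_s$, giving them a common coefficient modulus $|a_{j,k}|=\delta_s/\sqrt{M_s}$, and to choose an internal ordering making partial sums reach $\gtrsim\delta_s\log M_s$ on a set of measure $\gtrsim1$. This is impossible, and the obstruction is precisely the feature you yourself isolated at the start: at each $x$ only one Haar function per generation is nonzero, and $|\chi_{j,k}(x)|=2^{j/2}$ on its support. With equal coefficient moduli the single relevant term from generation $j$ has pointwise size $(\delta_s/\sqrt{M_s})\,2^{j/2}$, so the pointwise contributions form a geometric series dominated by the top generation, and for \emph{every} $x$ and \emph{every} partial sum (in any ordering)
\[
\Big|\sum_{j\in J} a_{j,k_j(x)}\chi_{j,k_j(x)}(x)\Big|\ \le\ \frac{\delta_s}{\sqrt{M_s}}\sum_{j=p_s}^{q_s-1}2^{j/2}\ \le\ \frac{\delta_s}{\sqrt{2}-1}\ \lesssim\ \delta_s .
\]
So partial sums never exceed $O(\delta_s)=O(\|\text{block}\|_2)$; there is no Menshov-type $\log M_s$ gain to be had from Haar blocks with flat $L^2$ coefficients, and your weighted-mass accounting ($\delta_s\approx 1/q_s$, cost $\approx w(2^{q_s})/q_s^2$) collapses. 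This is not incidental: the whole point of \otrm{OT1} above is that Haar is \emph{better} than a general orthonormal system, with optimal RC-multiplier $\log n$ rather than $\log^2 n$; the dyadic structure that you flagged as helpful in the sufficiency direction is exactly what forbids the general Menshov counterexample here. Ul\cprime yanov's construction instead takes $|a_{j,k}|\sim b_j2^{-j/2}$, so that each generation contributes comparably to the pointwise sum, and exploits a nested rearrangement of the Haar functions to make the $x$-dependent induced permutation of generations behave adversarially for a set of $x$ of positive measure. The gain then accrues linearly in the number of generations, not logarithmically in the number of functions, and the threshold $\sum 1/(n w(n))<\infty$ emerges from that arithmetic.

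For the sufficiency half your discussion is too vague to verify: ``applying the Menshov--Rademacher $L^2$ maximal inequality over the generation blocks with these two economies, and summing the per-block estimates by a Cauchy--Schwarz/Abel argument'' names ingredients without specifying the estimate that actually closes the argument. Note that the paper itself supplies a cleaner route: \otrm{OT1} ($\log n$ is an RC-multiplier for Haar) together with \otrm{OT3} (Ul\cprime yanov--Poleshchuk) already yields that every increasing $w$ with $\sum 1/(n w(n))<\infty$ is a UC-multiplier for Haar, so the substantive content of \otrm{OT2} is the necessity direction where your construction fails.
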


  In his famous overview \cite {Uly5} of 1964 Ul\cprime yanov raised two problems (see \cite{Uly5}, pp. 58, 62-63), and those have been further recalled several times in different papers of the author (see \cite{Uly4}, p. 1041, \cite{Uly8}, p. 80, \cite{Uly9}, p. 57). The problems claim

 1) Find the optimal sequence $w(n)$ to be RC-multiplier for the trigonometric (Walsh) system?

 2) Characterize the UC-multipliers of the trigonometric (Walsh) system?
 
 The following result somehow clarifies the relationship between these two problems in the terms of the Orlicz "extra factor" $\lambda(n)$ (see \e{d3}). It also tells us that the Orlicz theorem can be deduced from the Menshov-Rademacher theorem. 
 \begin{OldTheorem}[Ul\cprime yanov-Poleshchuk, \cite{Uly3, Pol}]\label{OT3}
 	If $w(n)$ is an RC-multiplier for an orthonormal system $\Phi=\{\phi_n(x)\}$ and $\lambda(n)$ is an increasing sequence of positive numbers  satisfying \e{d3},
 	then the sequence $\lambda(n)w(n)$ is a UC-multiplier for $\Phi$.
 \end{OldTheorem}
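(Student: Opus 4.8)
\noindent The plan is to fix one rearrangement $\sigma$ of $\ZN$ and a series $\sum_n a_n\phi_n$ with $\sum_n a_n^2\lambda(n)w(n)<\infty$, and to show that $\sum_k a_{\sigma(k)}\phi_{\sigma(k)}$ converges a.e. Since $\lambda(n)\nearrow\infty$, this hypothesis already forces \e{a3}, so $w$ controls the size of the coefficients while the factor $\lambda$ is a reserve to be spent against the loss caused by $\sigma$. Write $T_m=\sum_{k\le m}a_{\sigma(k)}\phi_{\sigma(k)}$ and $A(m)=\{\sigma(1),\dots,\sigma(m)\}$. By the Cauchy‑criterion argument it is enough to exhibit $K_1<K_2<\cdots$ for which
\md0
\left\|\ \sup_{m\ge m_{K_j}}\Big|\sum_{n\in A(m)\setminus A(m_{K_j})}a_n\phi_n\Big|\ \right\|_{2}\le 2^{-j},
\emd
where $m_K$ is the first index with $A(m)\supseteq\{1,\dots,K\}$; the sets $A(m)\setminus A(m_K)$ then contain only indices $>K$, so we may work inside the subsystem $\{\phi_n:n>K\}$, for which $w$ remains an RC‑multiplier.

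The next step is a dyadic block decomposition $\ZN=\bigcup_\nu\Delta_\nu$, $\Delta_\nu\approx(2^\nu,2^{\nu+1}]$, adapted to $\lambda$ so that $\lambda$ is comparable to a constant $\lambda_\nu$ on $\Delta_\nu$. Since $\log n\asymp\nu$ and $\sum_{n\in\Delta_\nu}n^{-1}\asymp1$ on $\Delta_\nu$, condition \e{d3} becomes $\sum_\nu(\nu\lambda_\nu)^{-1}<\infty$ and the hypothesis becomes $\sum_\nu\lambda_\nu\Lambda_\nu<\infty$, where $\Lambda_\nu:=\sum_{n\in\Delta_\nu}a_n^2w(n)$. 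For $m>m_K$ I would split $A(m)\setminus A(m_K)$ into the union $E(m)$ of the blocks it contains entirely and the \emph{fragments} $F_\nu(m)=(A(m)\setminus A(m_K))\cap\Delta_\nu$ left in the (possibly numerous) partially filled blocks, so that the left‑hand side above is controlled by
\md0
\left\|\sup_m\Big|\sum_{n\in E(m)}a_n\phi_n\Big|\right\|_2+\left\|\sup_m\Big|\sum_\nu\sum_{n\in F_\nu(m)}a_n\phi_n\Big|\right\|_2.
\emd

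For the first term, $E(m)$ grows by whole blocks; the order in which blocks get completed is a bona fide rearrangement $\rho$ of the block indices (the internal order of a completed block is irrelevant), so $\sup_m|\sum_{n\in E(m)}a_n\phi_n|$ is at once the majorant of a rearrangement of $\Phi$ that lists the blocks in the order $\rho$, and the majorant of the orthogonal block series $\sum_\nu f_\nu$, $f_\nu=\sum_{n\in\Delta_\nu}a_n\phi_n$, summed in the order $\rho$; I would estimate it by combining these two viewpoints — the Menshov–Rademacher theorem for the orthonormal system $\{f_\nu/\|f_\nu\|_2\}$ (whose Weyl multiplier $\log^2\nu$ in the block index, i.e.\ $(\log\log n)^2$ in the original scale, is absorbed by $\lambda_\nu$) together with the RC‑multiplier property of $w$ for $\Phi$ — aiming at an $L^2$ bound that is a tail of $\sum_\nu\lambda_\nu\Lambda_\nu$. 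For the fragment term the blocks are mutually orthogonal, and inside $\Delta_\nu$ the sum $\sum_{n\in F_\nu(m)}a_n\phi_n$ runs through an initial segment of $\Delta_\nu$ in the $\sigma$‑order; one bounds its maximum over $m$ by the Menshov–Rademacher maximal inequality inside $\Delta_\nu$ (loss $\asymp\log|\Delta_\nu|\asymp\nu$), uses orthogonality across $\nu$ instead of the triangle inequality, and sums against $\sum_\nu(\nu\lambda_\nu)^{-1}<\infty$ and $\sum_\nu\lambda_\nu\Lambda_\nu<\infty$ via Cauchy–Schwarz, again producing a tail of a convergent series.

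The hard part is exactly this bookkeeping. The RC‑multiplier hypothesis gives, for \emph{each} rearrangement of $\Phi$, only a majorant inequality whose constant may grow with the rearrangement; it cannot give a uniform one, since an RC‑multiplier need not be a UC‑multiplier (compare Theorems~\ref{OT1} and~\ref{OT2} for the Haar system). The sole function of the extra factor $\lambda$ — equivalently of condition \e{d3} — is to absorb this growth, block by block, simultaneously against the $\log|\Delta_\nu|$ loss of the within‑block Menshov–Rademacher estimate, the $\log^2(\text{block index})$ loss of coarsening to the block system, and the reattachment of weights forced by the block‑completion order $\rho$; a further delicate point is that many blocks may be fragmented at one moment, so the fragment term must be handled through orthogonality rather than the triangle inequality. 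Getting all these estimates to close at once — i.e.\ distributing the reserve $\lambda_\nu$ correctly between the ``block‑order'' and ``within‑block'' parts — is the technical core; when $w(n)\gtrsim\log^2 n$ the argument collapses to the deduction of the Orlicz theorem from the Menshov–Rademacher theorem mentioned in the introduction.
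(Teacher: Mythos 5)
The paper does not prove this result: \trm{OT3} is stated as background and attributed to Ul\cprime yanov and Poleshchuk, with references to \cite{Uly3} and \cite{Pol}, so there is no proof in the text against which your sketch can be compared. I can therefore only assess the sketch on its own terms.

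The broad strategy — dyadic blocking $\Delta_\nu\approx(2^\nu,2^{\nu+1}]$, reformulation of \e{d3} as $\sum_\nu(\nu\lambda_\nu)^{-1}<\infty$ and of the coefficient hypothesis as $\sum_\nu\lambda_\nu\Lambda_\nu<\infty$, and a split of $A(m)\setminus A(m_K)$ into completed blocks and fragments — has the right flavour. But there are two genuine gaps. First, the RC-multiplier hypothesis yields only a.e.\ convergence, not the $L^2$ majorant bound that your Cauchy-criterion display asks for. Passing from ``a.e.\ convergence for every $\ell^2(w)$ coefficient sequence'' to a quantitative maximal estimate is a real step (a Banach--Steinhaus/Nikishin-type argument), and what one gets is a weak-type bound, not the strong $L^2$ one that you write down; this needs to be said and handled, since it is exactly the kind of qualitative-to-quantitative upgrade that can fail.

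Second, the fragment estimate does not numerically close as written. The Menshov--Rademacher maximal inequality inside $\Delta_\nu$ gives
$\bigl\|\sup_m\bigl|\sum_{n\in F_\nu(m)}a_n\phi_n\bigr|\bigr\|_2\lesssim\nu\bigl(\sum_{n\in\Delta_\nu}a_n^2\bigr)^{1/2}\lesssim\nu(\Lambda_\nu/w(2^\nu))^{1/2}$, and whether one then sums over $\nu$ by the triangle inequality or by the Cauchy--Schwarz-with-weights device you allude to, one is forced to require $\sum_\nu\nu^2/(\lambda_\nu w(2^\nu))<\infty$. But \e{d3} only delivers $\sum_\nu 1/(\nu\lambda_\nu)<\infty$, so the estimate closes only when $w(2^\nu)\gtrsim\nu^3$, i.e.\ $w(n)\gtrsim\log^3 n$. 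That is far more than an RC-multiplier need satisfy: for the Haar system the minimal RC-multiplier is $w(n)\sim\log n$ by \trm{OT1}, and then the required sum is $\sum_\nu\nu/\lambda_\nu$, which already diverges for the borderline $\lambda_\nu\sim\log\nu(\log\log\nu)^2$. The proposed ``orthogonality across $\nu$ instead of the triangle inequality'' does not rescue this, since the suprema $\sup_m|\sum_{n\in F_\nu(m)}a_n\phi_n|$ over different $\nu$ are not orthogonal and the pointwise supremum over $m$ does not commute with the orthogonal decomposition of a fixed partial sum. You rightly flag that ``getting all these estimates to close at once'' is the technical core, but as sketched they do not close; a correct argument must treat the partially filled blocks more cleverly — for instance by choosing the checkpoints $K_j$ so that only boundedly many blocks are fragmented at any moment, or by replacing the raw $\log|\Delta_\nu|$ Menshov--Rademacher loss inside a block by something tied to the RC property of $w$ rather than to the block length.
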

Relating to the problem 1), we first note that the Menshov-Rademacher theorem implies that $\log^2 n$ is a RC-multiplier for the trigonometric and Walsh systems, and second, it is not known any RC-multiplier $w(n)=o(\log^2n)$ for those systems. Similarly, the only known UC-multipliers of trigonometric and Walsh systems are sequences $\lambda(n)\log^2 n$ coming form the result of Orlicz for the general orthonormal systems. 

The lower estimates for RC and UC multipliers of Walsh system were studied in \cite{Boch, Nak4, Nak5, Nak3, Tan3}. The best result at this moment proved independently by Bochkarev \cite{Boch2, Boch} and Nakata \cite{Nak3} says that if an increasing sequence $w(n)$ satisfies 
\begin{equation}\label{4}
\sum_{n=1}^\infty\frac{1}{nw(n)}=\infty,
\end{equation}
then it is not an UC-multiplier for the Walsh system.

For the trigonometric system analogous bounds were studied in \cite{Mor, Tan, Nak1, Nak2, Seroj}. The most general result is due Galstyan \cite{Seroj}, 1992, who proved that under the condition 
\begin{equation}\label{y3}
\sum_{n=1}^\infty\frac{1}{n\log\log n\cdot w(n)}=\infty
\end{equation}
the sequence $w(n)$ fails to be UC-multiplier for the trigonometric system. In contrast to Haar and Walsh systems in the trigonometric case we see here extra $\log\log n$ factor in \e{y3}. The \cor{C2} stated below tells us that the factor $\log\log n$ can be removed also in the trigonometric system case.

Note that the following inequality is the key argument in the proof of the Menshov-Rademacher theorem.
\begin{OldTheorem}[Menshov-Rademacher, \cite{Men}, \cite{Rad}, see also \cite{KaSa}]\label{MR}
	For any orthonormal system $\{\phi_k:\, \,k=1,2,\ldots,n\}\subset L^2(0,1)$ and coefficients $a_k$ it holds
	\begin{equation}\label{1}
	\left\|\max_{1\le m \le n}\left|\sum_{k=1}^ma_k\phi_k\right|\,\right\|_2\le c\cdot\log n \left\|\sum_{k=1}^na_k\phi_k\right\|_2,
	\end{equation}
	where $c>0$ is an absolute constant.
\end{OldTheorem}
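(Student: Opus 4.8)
The plan is to establish \e{1} by the classical dyadic bisection (chaining) argument. First I would reduce to the case $n=2^N$: if $2^{N-1}<n\le 2^N$, set $a_k=0$ for $n<k\le 2^N$, which changes neither side except that $\log n$ gets replaced by $\log 2^N=N\log 2\le c\log n$. Writing $S_m=\sum_{k=1}^m a_k\phi_k$, I would introduce, for $0\le j\le N$ and $1\le i\le 2^{N-j}$, the block sums over dyadic intervals of length $2^j$,
\begin{equation}
\Delta_{j,i}=\sum_{k=(i-1)2^{j}+1}^{i2^{j}}a_{k}\phi_{k}.
\end{equation}

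The combinatorial heart of the argument is that every initial segment $\{1,2,\dots,m\}$ with $1\le m\le 2^N$ is a disjoint union of at most $N+1$ dyadic intervals, one of each length $2^{j}$ with $j$ running over the positions of the nonzero binary digits of $m$. Hence $S_m=\sum_{j}\Delta_{j,i_j(m)}$, where the sum is over those scales and $i_j(m)$ is the index of the relevant block at scale $j$, and therefore pointwise
\begin{equation}
\max_{1\le m\le n}|S_{m}|\ \le\ \sum_{j=0}^{N}\ \max_{1\le i\le 2^{N-j}}|\Delta_{j,i}|,
\end{equation}
since adjoining the missing scales only adds nonnegative terms.

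Next I would take $L^2$ norms. By the triangle inequality in $L^2$ it suffices to bound each $\bigl\|\max_{i}|\Delta_{j,i}|\bigr\|_2$. Here one passes from the maximum to the $\ell^2$-sum, $\max_{i}|\Delta_{j,i}|^{2}\le\sum_{i}|\Delta_{j,i}|^{2}$, integrates, and uses that the dyadic intervals of a fixed length $2^{j}$ are pairwise disjoint and partition $\{1,\dots,2^N\}$, so that orthonormality (Parseval inside each block, then additivity over blocks) gives
\begin{equation}
\Bigl\|\max_{1\le i\le 2^{N-j}}|\Delta_{j,i}|\Bigr\|_{2}^{2}\ \le\ \sum_{i}\|\Delta_{j,i}\|_{2}^{2}\ =\ \sum_{k=1}^{2^{N}}a_{k}^{2}\ =\ \Bigl\|\sum_{k=1}^{n}a_{k}\phi_{k}\Bigr\|_{2}^{2}.
\end{equation}
Summing this over the $N+1$ scales $j=0,\dots,N$ yields $\bigl\|\max_{1\le m\le n}|S_m|\bigr\|_{2}\le (N+1)\bigl\|\sum_{k=1}^{n}a_{k}\phi_{k}\bigr\|_{2}$, and $N+1\le c\log n$ for $n\ge 2$ (the case $n=1$ being trivial).

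The displayed computations are essentially routine; the genuine point is the observation that controls the logarithm. The maximal quantity ranges over all $n$ partial sums, yet it is dominated by only $O(\log n)$ maximal functions — one per dyadic scale — and at each fixed scale the passage from $\max_i$ to the $\ell^2$-sum is \emph{free} in $L^2$, because orthonormality collapses $\sum_i\|\Delta_{j,i}\|_2^2$ to $\|\sum_k a_k\phi_k\|_2^2$ regardless of the number $2^{N-j}$ of blocks. This is exactly why the growth is logarithmic rather than polynomial, and it is the step one should state and justify carefully; everything else is bookkeeping.
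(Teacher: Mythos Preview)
Your argument is correct and is the standard dyadic bisection proof of the Menshov--Rademacher inequality. However, the paper does not actually prove this statement: it is recorded there as a classical theorem (labeled Theorem~F, with citations to \cite{Men}, \cite{Rad}, and \cite{KaSa}) and is invoked only as background to motivate the main results. So there is no proof in the paper to compare yours against; your write-up simply supplies the well-known argument that the paper takes for granted.
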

Similarly, the counterexample of Menshov is based on the following results.
\begin{OldTheorem}[Menshov, \cite{Men}]\label{M}
	For any natural number $n\in\ZN$ there exists an orthogonal system $\phi_k$, $k=1,2,\ldots,n$, such that
	\begin{equation}
	\left\|\max_{1\le m \le n}\left|\sum_{k=1}^m\phi_k\right|\,\right\|_2\ge c\cdot\log n \left\|\sum_{k=1}^n\phi_k\right\|_2,
	\end{equation}
	for an absolute constant $c>0$.
\end{OldTheorem}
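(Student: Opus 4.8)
The plan is to reduce the statement to a finite linear‑algebra problem about matrices and then to build, by hand, a matrix with the required combinatorial structure; this is Menshov's classical construction, whose heart I only sketch.

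\emph{Step 1: reduction to a matrix.} Fix $n$, partition $[0,1)$ into $n$ equal intervals $\Delta_1,\dots,\Delta_n$ of length $1/n$, and look for the system among step functions
\begin{equation*}
\phi_k=\sqrt n\sum_{j=1}^n a_{kj}\,\chi_{\Delta_j}\qquad(k=1,\dots,n),
\end{equation*}
with a real $n\times n$ matrix $A=(a_{kj})$ to be chosen. Then $\langle\phi_k,\phi_{k'}\rangle=\sum_{j}a_{kj}a_{k'j}$, so $\{\phi_k\}$ is orthogonal precisely when the rows of $A$ are pairwise orthogonal, while
\begin{gather*}
\Big\|\sum_{k=1}^n\phi_k\Big\|_2^2=\sum_{j=1}^n\Big(\sum_{k=1}^n a_{kj}\Big)^2,\\
\Big\|\max_{1\le m\le n}\Big|\sum_{k=1}^m\phi_k\Big|\,\Big\|_2^2=\sum_{j=1}^n\max_{1\le m\le n}\Big(\sum_{k=1}^m a_{kj}\Big)^2.
\end{gather*}
Hence it suffices to produce an $n\times n$ real matrix with pairwise orthogonal rows for which
\begin{equation*}
\sum_{j=1}^n\max_{1\le m\le n}\Big(\sum_{k=1}^m a_{kj}\Big)^2\ \ge\ c^2(\log n)^2\sum_{j=1}^n\Big(\sum_{k=1}^n a_{kj}\Big)^2;
\end{equation*}
informally, the full column‑sums must be small in $\ell^2$ while a fixed proportion of the columns must carry some partial column‑sum of order $\log n$.

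\emph{Step 2: the combinatorial construction.} Here I would follow Menshov. Assume $n=2^p$ (the general case follows by passing to the nearest smaller power of $2$), so $\log n\asymp p$, and split the rows into $p$ consecutive generations, generation $\nu$ carrying a weight $h_\nu>0$. Within generation $\nu$ one places balanced $\pm h_\nu$ patterns on a suitable partition of the column index set into blocks, the whole scheme arranged so that: (i) a combinatorial balancing identity forces each generation‑$\nu$ row to be orthogonal to every other row; (ii) on a column‑set of proportion bounded below, the contributions of the successive generations reinforce, so that the partial column‑sum after generation $\nu$ attains the value $h_1+\dots+h_\nu$ there; and (iii) the blocks are organized so that the full column‑sums telescope down to something of controlled $\ell^2$‑norm. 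The key structural point behind (ii) is that one must go beyond disjoint‑support (Haar‑type) systems — for which the corresponding maximal constant is only of order $\sqrt{\log n}$ — and spread the reinforcement over a column‑set of non‑negligible density; this is precisely what the nontrivial balancing in (i) is there to permit. Choosing the weights $h_\nu$ to optimize the ratio (a constant profile $h_\nu\equiv 1$ is essentially the right choice) yields $h_1+\dots+h_p\asymp\log n$.

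\emph{Step 3: verification, and the main obstacle.} Granting such a matrix, the desired inequality is bookkeeping: orthogonality of the rows is (i); the quantity $\sum_j(\sum_k a_{kj})^2$ is read off from the telescoping in (iii); and $\sum_j\max_m(\sum_{k\le m}a_{kj})^2$ is bounded from below by restricting the sum over $j$ to the reinforcing column‑set of (ii), on which some partial column‑sum has size $\asymp\log n$. Combining the three estimates gives the claim, and Theorem~\ref{MR} shows that the exponent $1$ on $\log n$ is optimal. I expect the genuine difficulty to lie entirely in Step 2: designing the block partitions and balancing incidences so that exact row‑orthogonality coexists with partial column‑sums that add up constructively across all $\sim\log n$ scales on a column‑set of positive proportion, while the full column‑sums stay small. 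That simultaneous reconciliation is the crux of Menshov's theorem; everything around it is routine.
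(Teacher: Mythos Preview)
The paper does not contain its own proof of this statement: Theorem~\ref{M} is quoted as a classical result of Menshov and used only as background motivation. (One could regard the lower bound in the paper's Theorem~\ref{T3} as furnishing, a posteriori, a completely different proof of Theorem~\ref{M} via the rearranged trigonometric system and Demeter's directional Hilbert transform estimate, but that is not how the paper presents it.) So there is no ``paper's proof'' to compare against in the usual sense; what can be assessed is whether your sketch stands on its own.

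Your Step~1 is correct and is exactly the standard reduction: passing to step functions on $n$ equal intervals turns the problem into a purely matrix-theoretic one about orthogonal rows, full column sums, and maximal partial column sums.

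Step~2, however, is not a proof but a placeholder --- and you say as much. The description in terms of $p\sim\log n$ ``generations'' carrying weights $h_\nu$ and ``balanced $\pm h_\nu$ patterns on blocks'' does not match the classical Menshov construction and, as written, does not determine any concrete matrix. The standard argument (as in \cite{Men}, or the presentations in \cite{KaSa}, \cite{KaSt}) is not multi-scale at all: one takes a single $n\times n$ matrix whose off-diagonal entries are essentially $c\,n^{-1/2}(k-j)^{-1}$, a discrete Hilbert kernel; near-orthogonality of rows comes from the boundedness of the discrete Hilbert transform on $\ell^2$, exact orthogonality from a small diagonal correction, and the logarithmic growth of $\max_m\big|\sum_{k\le m}(k-j)^{-1}\big|$ is what produces the factor $\log n$ on a positive proportion of columns. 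Your remark that a disjoint-support (Haar-type) scheme yields only $\sqrt{\log n}$ is correct and identifies the obstacle, but the text gives no mechanism for overcoming it; phrases like ``a combinatorial balancing identity forces\ldots'' and ``the blocks are organized so that the full column-sums telescope down'' are requirements, not constructions. As it stands, Step~2 is the entire content of the theorem, and it is missing.
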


To state the results of the present paper let us introduce some notations. For two positive quantities $a$ and $b$ the notation $a\lesssim b$ will stand for the inequality $a<c\cdot b$, where $c>0$ is an absolute constant, and we write $a\sim b$ whenever $a\lesssim b\lesssim a$. 
Let $\Sigma_N$ denote the family of one to one mappings (permutations) on $\{1,2,\ldots,N\}$. We will consider the trigonometric system on the torus $\ZT=\ZR/\ZZ$. For a given integer $N\ge 1$ and $\sigma\in\Sigma_N$ we consider the operator $T_{\sigma,N}:L^2(\ZT)\to L^2(\ZT)$ defined by 
\begin{align}
T_{\sigma,N}f(x)=\max_{1\le m\le N}\left|\sum_{k=1}^mc_{\sigma(k)}e^{2\pi i\sigma(k)x}\right|\text { where }c_k=\int_\ZT f(x)e^{-2\pi ikx}dx.
\end{align}

Our main result is the following.
\begin{theorem}\label{T3}
	For any integer $N> 1$ there exists a permutation $\sigma\in \Sigma_N$ such that
	\begin{align}
	\|T_{\sigma,N}\|_{L^2\to L^2}\sim  \log N.\label{a50}
	\end{align}
\end{theorem}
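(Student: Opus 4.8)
The estimate \e{a50} is an equivalence, so there are two inequalities to establish, and they are of very different character. For the upper bound $\|T_{\sigma,N}\|_{L^2\to L^2}\lesssim\log N$ one observes that, for \emph{any} $\sigma\in\Sigma_N$, the finite family $\{e^{2\pi i\sigma(k)x}:k=1,\dots,N\}$ is an orthonormal system in $L^2(\ZT)$; applying the Menshov--Rademacher inequality \e{1} (\otrm{MR}) to this system, with the coefficients $c_{\sigma(k)}$, and then Bessel's inequality $\sum_{k=1}^N|c_{\sigma(k)}|^2=\sum_{k=1}^N|c_k|^2\le\|f\|_2^2$, gives $\|T_{\sigma,N}f\|_2\le c\log N\bigl\|\sum_{k=1}^Nc_{\sigma(k)}e^{2\pi i\sigma(k)x}\bigr\|_2\le c\log N\,\|f\|_2$. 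Also, taking $f=e^{2\pi i\sigma(1)x}$ shows $\|T_{\sigma,N}\|_{L^2\to L^2}\ge 1$ for every $\sigma$, so for bounded $N$ the lower bound is trivial and we may assume $N$ large.

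\textbf{The lower bound: strategy.}
The substantial part is to produce, for large $N$, one permutation $\sigma$ with $\|T_{\sigma,N}\|_{L^2\to L^2}\gtrsim\log N$; this is the trigonometric analogue of Menshov's counterexample (\otrm{M}), but now the ``bad orthonormal system'' is forced to be a rearrangement of the first $N$ characters, which makes it far more rigid. The plan is to build $\sigma$, together with a test function $f$, from a multiscale scheme: partition $\{1,\dots,N\}$ into consecutive groups, which we use as pairwise disjoint frequency blocks; on each block we place a trigonometric polynomial obtained by modulating a Dirichlet-type (or $\sum_{k}k^{-1}e^{2\pi ikx}$-type) kernel, positioned at a prescribed family of centres; and we prescribe the order of the blocks together with the order of the frequencies inside each block. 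The order and the centres are to be chosen so that for every $x$ in a set of measure $\gtrsim 1$ there is an index $m=m(x)$ for which the rearranged partial sum $S_m(x)$ accumulates a contribution of size $\gtrsim\log N$ --- either by collecting $\gtrsim\log N$ pieces from different blocks that are in phase at $x$, or (what avoids phase cancellation between distinct blocks) by catching a single block whose \emph{own} internal partial sums attain $\gtrsim\log N$ at $x$ at a moment when the already completed preceding blocks sum, at $x$, to something of size at most half of that. Since we only need a lower bound on the maximum, we are free to restrict the inner maximum to these favourable indices. At the same time disjointness of the frequency supports makes $\|f\|_2^2$ exactly the sum of the blockwise $L^2$-norms squared, and with the normalisation $\sim k^{-1}$ each block contributes $O(1)$, so $\|f\|_2$ stays under control; dividing the two estimates yields $\|T_{\sigma,N}\|_{L^2\to L^2}\gtrsim\log N$.

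\textbf{Where the difficulty lies.}
The hard point is reconciling, inside the rigid integer‑frequency lattice, three competing requirements: (a) the ``good sets'' on which blocks are large must have total measure $\gtrsim1$, which forces the building blocks to be \emph{spread out}; (b) $\|f\|_2$ must stay $O(1)$, which forces small coefficients, hence building blocks whose individual partial sums are only logarithmically large, not polynomially; and (c) the partial sums must genuinely accumulate $\gtrsim\log N$ at the good points, i.e. the contributions collected along the way must not be annihilated by the modulation phases nor by the other blocks. A naive attempt to ``spread out'' the frequencies fails badly: dilating different blocks by multiplicatively independent factors turns the partial sums into a martingale and, by Doob's inequality, destroys the logarithmic blow-up. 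Hence the centres and the modulation parameters must be chosen with care --- for instance rational centres whose denominators divide the relevant modulation parameters, arranged in a recursive, self-similar pattern so that the ratio $\|T_{\sigma,\cdot}\|/\|\cdot\|_2$ gains a fixed amount at each of $\sim\log N$ scales --- and verifying that this choice delivers the required pointwise accumulation on a set of measure $\gtrsim 1$ is, I expect, the heart of the argument. It is precisely this phase/alignment obstruction that is responsible for the spurious $\log\log N$ factor in the earlier bounds of Galstyan and others, and removing it is the new contribution of \trm{T3}.
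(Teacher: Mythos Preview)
Your treatment of the upper bound is fine and matches the paper. The lower bound, however, is not a proof but a programme, and one that you yourself flag as incomplete (``verifying that this choice delivers the required pointwise accumulation \ldots\ is, I expect, the heart of the argument''). The specific gap is exactly the one you name: you have not explained how to choose the centres, modulation parameters and the internal orderings so that the partial sums of distinct blocks \emph{accumulate} rather than cancel on a set of positive measure. Direct one-dimensional multiscale constructions of the kind you describe are precisely what earlier authors tried; they are what produces the spurious $\log\log n$ factor, and nothing in your proposal indicates a mechanism for removing it.

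The paper's route is entirely different from yours and sidesteps the phase/alignment obstruction altogether. It does \emph{not} build a one-dimensional example directly. Instead it starts in $\ZR^2$ from Demeter's sharp lower bound $\|H_\Theta^*\|_{L^2\to L^2}\gtrsim\log N$ for the directional Hilbert transform along $N$ uniformly distributed directions; converts this, via sectorial multipliers and a mollification, into a system of $N$ functions $r_k$ on $\ZT^2$ with almost disjoint frequency supports and with the desired maximal-partial-sum blow-up (\lem{L14}); then passes to genuine two-dimensional trigonometric polynomials $p_k$ by a sampling/averaging argument. The crucial and perhaps unexpected step is the transfer from $\ZT^2$ to $\ZT$: one discretises at coprime resolutions $p=N^{31}$ and $q=N^{31}+1$, and uses the Chinese Remainder Theorem (\pro{P1}) to identify the product discrete system $T^{(p)}\times T^{(q)}$ with the one-dimensional discrete system $T^{(pq)}$; a final Ces\`aro smoothing turns the resulting step functions into honest trigonometric polynomials $Q_k$ with spectra in $[1,N^{70}]$ (\lem{L10}). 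The logarithmic gain is inherited from Demeter's two-dimensional example, where the ``phase alignment'' is handled by the geometry of directions rather than by any delicate arithmetic choice in one variable. Your proposal contains none of these ingredients.
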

We note that the upper bound in \e{a50} follows from the Menshov-Rademacher inequality \e{1}. Recall the weak $L^2$-norm of an operator $T:L^2\to L^2$ defined
\begin{equation*}
\|T\|_{L^2\to L^{2,\infty}}=\sup_{\|f\|_2\le 1,\,\lambda>0}\lambda(|\{|Tf(x)|>\lambda\}|)^{1/2}.
\end{equation*}
From the lower bound of \e{a50}, applying  \lem{L11}, we easily deduce also a lower estimate for the weak $L^2$-norm of the operator $T_{\sigma,N}$. Namely,
\begin{corollary}\label{C0}
For any integer $N> 1$ one can find a permutation  $\sigma\in \Sigma_N$ such that
\begin{equation}\label{a51}
\|T_{\sigma,N}\|_{L^2\to L^{2,\infty}}\gtrsim \sqrt{\log N}
\end{equation}
\end{corollary}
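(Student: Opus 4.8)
The plan is to take $\sigma\in\Sigma_N$ to be precisely the permutation furnished by \trm{T3}, so that $\|T_{\sigma,N}\|_{L^2\to L^2}\gtrsim\log N$, and then to convert this strong-type lower bound into a weak-type one. The conversion rests on the general fact that for a maximal operator of the type of $T_{\sigma,N}$ the strong $L^2$ norm never exceeds $C\sqrt{\log N}$ times the weak $L^2$ norm; this is the content of \lem{L11}, and once it is in hand \e{a51} follows at once. Since \lem{L11} is a soft real-variable statement, let me indicate how the comparison goes.

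I would fix $f$ with $\|f\|_2\le1$ for which $\|T_{\sigma,N}f\|_2\ge\tfrac12\|T_{\sigma,N}\|_{L^2\to L^2}$, put $g=T_{\sigma,N}f$ and $A=\|T_{\sigma,N}\|_{L^2\to L^{2,\infty}}$, and examine the distribution function $d(\lambda)=|\{x\in\ZT:\ g(x)>\lambda\}|$. Two elementary observations are needed. First, by the Cauchy--Schwarz inequality and Parseval's identity every partial sum obeys $\left|\sum_{k=1}^m c_{\sigma(k)}e^{2\pi i\sigma(k)x}\right|\le\sum_{k=1}^N|c_k|\le\sqrt N\,\|f\|_2\le\sqrt N$, so $0\le g\le\sqrt N$ pointwise and $d(\lambda)=0$ for $\lambda>\sqrt N$; this hard cutoff is what makes the integral below converge at its upper end. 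Second, the definition of the weak norm gives $d(\lambda)\le A^2/\lambda^2$, while trivially $d(\lambda)\le1$, so $d(\lambda)\le\min\{1,\,A^2/\lambda^2\}$.

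The key step is then just the layer-cake identity $\|g\|_2^2=\int_0^{\sqrt N}2\lambda\,d(\lambda)\,d\lambda$, with the integral split at $\lambda=A$: on $0<\lambda\le A$ I use $d(\lambda)\le1$, which contributes at most $A^2$, and on $A<\lambda\le\sqrt N$ I use $d(\lambda)\le A^2/\lambda^2$, which contributes at most $2A^2\log(\sqrt N/A)=A^2\log(N/A^2)$. The degenerate possibility $A<1$ is ruled out for large $N$ because it would force $A^2\log(N/A^2)\gtrsim\|g\|_2^2\gtrsim(\log N)^2$, which is impossible since $t^2\log(1/t)$ stays bounded on $(0,1)$ whereas $\log N\to\infty$; so $A\ge1$, and then $\|g\|_2^2\le A^2(1+\log N)\lesssim A^2\log N$, that is, $\|T_{\sigma,N}\|_{L^2\to L^2}\lesssim A\sqrt{\log N}$. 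Plugging in $\|T_{\sigma,N}\|_{L^2\to L^2}\gtrsim\log N$ from \trm{T3} now yields $A\gtrsim\sqrt{\log N}$, which is \e{a51} (the few small values of $N$ being absorbed into the implied constant). I do not anticipate any real difficulty here: the whole weight of the corollary rests on \trm{T3}, and the only point in the present argument calling for a moment's care is securing the pointwise bound $g\le\sqrt N$ so that the layer-cake integral is genuinely truncated at the top.
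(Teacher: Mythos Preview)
Your proof is correct and follows essentially the same route as the paper: take $\sigma$ from \trm{T3} to secure $\|T_{\sigma,N}\|_{L^2\to L^2}\gtrsim\log N$, then convert this to a weak-type lower bound via the layer-cake computation that constitutes \lem{L11}, using the trivial pointwise $L^\infty$ bound on $T_{\sigma,N}f$ to truncate the integral at the top. The only cosmetic differences are that you split the distribution integral at $\lambda=A$ rather than at $\lambda=1$ and use the sharper cutoff $\sqrt N$ in place of $N$; neither changes the substance of the argument.
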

Applying \e{a51} we prove the following results.
\begin{corollary}\label{C1}
 Any increasing sequence of positive numbers $w(n)$, satisfying  
 \begin{equation}\label{y1}
w(n)=o(\log n),
 \end{equation}
 fails to be a RC-multiplier for the trigonometric system. Moreover, there are coefficients $a_n$, satisfying \e{a3}, such that the series
 \begin{equation*}
 \sum_{n=1}^\infty a_ne^{2\pi i\sigma(n)x}
 \end{equation*}
 is almost everywhere divergent for some permutation $\sigma$.
\end{corollary}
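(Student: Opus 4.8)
I would prove \cor{C1} by a Menshov-type condensation-of-singularities construction driven by the weak-type bound \e{a51}, with an extra replication step so that the divergence set has full measure and not merely positive measure. \textbf{Building blocks.} Fix the target partial-sum oscillation to be $1$, and proceed inductively: suppose positions $1,\dots,\Pi_{j-1}$ of the rearrangement have been assigned, $\Pi_0=0$. For a large integer $N_j$, chosen below, \cor{C0} (i.e.\ \e{a51}) provides $\sigma_j\in\Sigma_{N_j}$, a function $f_j$ with $\|f_j\|_2\le1$, and a level $\lambda_j>0$ such that $E_j=\{x\in\ZT:\ T_{\sigma_j,N_j}f_j(x)>\lambda_j\}$ satisfies $\lambda_j|E_j|^{1/2}\gtrsim\sqrt{\log N_j}$. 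Writing $c^{(j)}_k=\int_\ZT f_j(x)e^{-2\pi ikx}\,dx$, Bessel's inequality and Cauchy--Schwarz give $T_{\sigma_j,N_j}f_j(x)\le\sum_{k\le N_j}|c^{(j)}_k|\le\sqrt{N_j}$, hence $\lambda_j\le\sqrt{N_j}$; combined with $|E_j|\le1$ this forces $\lambda_j\gtrsim\sqrt{\log N_j}$ and $|E_j|\gtrsim\log N_j/N_j$. Setting $\delta_j=1/\lambda_j$ one gets $\delta_j^2\lesssim|E_j|/\log N_j$, which is the inequality that makes the whole construction balance.

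\textbf{Spreading and assembling.} Since $|E_j|$ is small, $E_j$ is far from full measure, so I would replicate it by translations. Choosing $R_j\sim j/|E_j|$ shifts $t_{j,0},\dots,t_{j,R_j-1}\in\ZT$ uniformly at random, the expected measure of $\ZT\setminus\bigcup_l(E_j+t_{j,l})$ equals $(1-|E_j|)^{R_j}<2^{-j}$, so some deterministic choice yields $|\ZT\setminus\widetilde E_j|<2^{-j}$ with $\widetilde E_j=\bigcup_{l=0}^{R_j-1}(E_j+t_{j,l})$. Super-block $j$ then occupies positions $\Pi_{j-1}+1,\dots,\Pi_j$ with $\Pi_j=\Pi_{j-1}+R_jN_j$, split into $R_j$ sub-blocks of length $N_j$, the $l$-th at positions $\Pi_{j-1}+lN_j+1,\dots,\Pi_{j-1}+lN_j+N_j$. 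I let the rearrangement $\tau$ of $\ZN$ send position $\Pi_{j-1}+lN_j+k$ to the frequency $\Pi_{j-1}+lN_j+\sigma_j(k)$ — so $\tau$ permutes each sub-block by $\sigma_j$ and is a genuine bijection of $\ZN$ — and put $a_{\Pi_{j-1}+lN_j+k}=\delta_j c^{(j)}_{\sigma_j(k)}e^{-2\pi i\sigma_j(k)t_{j,l}}$, i.e.\ the Fourier coefficient of $\delta_j e^{2\pi i(\Pi_{j-1}+lN_j)x}f_j(x-t_{j,l})$ at the assigned frequency. Then the partial sums $S_P$ of $\sum_n a_ne^{2\pi i\tau(n)x}$ telescope inside each sub-block, so that $\max_{1\le r\le N_j}\bigl|S_{\Pi_{j-1}+lN_j+r}(x)-S_{\Pi_{j-1}+lN_j}(x)\bigr|=\delta_j\,T_{\sigma_j,N_j}f_j(x-t_{j,l})$.

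\textbf{Verification.} Super-block $j$ contributes to $\sum_n|a_n|^2w(n)$ at most $w(\Pi_j)R_j\delta_j^2\|f_j\|_2^2\le w(\Pi_j)R_j\delta_j^2$; here $R_j\delta_j^2\lesssim R_j|E_j|/\log N_j\sim j/\log N_j$ while $\Pi_j\le\Pi_{j-1}+R_jN_j\lesssim jN_j^2/\log N_j\le N_j^3$ once $N_j$ is large, so, since $w(n)=o(\log n)$, this contribution is $\lesssim j\,w(N_j^3)/\log N_j\to0$ as $N_j\to\infty$ with $j$ fixed, uniformly over the (existential) choice of $E_j$. Thus at stage $j$ I fix $N_j$ large enough that the contribution is $<2^{-j}$, and then $\sum_n|a_n|^2w(n)<\infty$, i.e.\ \e{a3} holds. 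For divergence: if $x\in\widetilde E_j$ then $x-t_{j,l}\in E_j$ for some $l$, so $\max_r\bigl|S_{\Pi_{j-1}+lN_j+r}(x)-S_{\Pi_{j-1}+lN_j}(x)\bigr|>\delta_j\lambda_j=1$, i.e.\ two partial sums at positions $\ge\Pi_{j-1}$ differ by more than $1$; since $\sum_j|\ZT\setminus\widetilde E_j|<\infty$, the Borel--Cantelli lemma gives that a.e.\ $x$ lies in all but finitely many $\widetilde E_j$, so $\{S_P(x)\}$ fails the Cauchy criterion and $\sum_n a_ne^{2\pi i\tau(n)x}$ diverges for a.e.\ $x$. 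Taking $\sigma=\tau$ yields the ``Moreover'' claim and shows $w$ is not an RC-multiplier. (The operator $T_{\sigma,N}$ is stated with complex exponentials; the passage to the real trigonometric system, if desired, is routine.)

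\textbf{Main obstacle.} I expect the delicate point to be exactly this quantitative balance: pushing the divergence set up to full measure needs about $R_j\sim|E_j|^{-1}$ translated copies of $E_j$, at a cost $R_j\delta_j^2\sim R_j|E_j|/\log N_j$ in the weight sum, and it is precisely the $\sqrt{\log N}$ strength of \e{a51} — equivalently $\lambda_j^2|E_j|\gtrsim\log N_j$ — that keeps this cost of order $j/\log N_j$, small enough to be absorbed by $w(n)=o(\log n)$ once $N_j$ is taken large. A lower bound weaker than $\sqrt{\log N}$ would leave behind an unbounded residual factor and the argument would not close, which is why this corollary rests on the full strength of \trm{T3}.
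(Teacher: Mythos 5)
Your proof is correct and its core mechanism — amplify the weak $L^{2,\infty}$ lower bound \e{a51} by translating and modulating copies of the extremal function to disjoint frequency blocks, choose the block scale $N_j$ so large that $w(n)=o(\log n)$ absorbs the cost, and drive the divergence set up to full measure — is the same as the paper's. The organization differs, though. The paper first proves \lem{L12}: starting from \e{a51} at scale $M\sim\sqrt N$, it packs $l\sim\lambda_0^2/\log M\le M/\log M$ modulated translates $\tfrac{1}{\sqrt l}Q(x-x_k)e^{2\pi i kMx}$ into $\ZP_{lM}\subset\ZP_N$, producing a normalized polynomial $P\in\ZP_N$, $\|P\|_2\sim1$, and a permutation $\sigma\in\Sigma_N$ with $|\{T_{\sigma,N}(\cdot,P)>\sqrt{\log N}\}|\gtrsim1$. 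Only then, in the proof of \cor{C1} proper, does it scale these blocks by $\frac{1}{k\sqrt{w(2N_k)}}$ on lacunary $N_k$ chosen with $w(2N_k)\le\log N_k/k^2$, modulate by $e^{2\pi iN_kx}$, and invoke the Zygmund translation lemma (ch.\ 13, Lemma 1.24) once more to make the $\limsup$ of the singularity sets have full measure. You instead merge the two translation steps into one: at stage $j$ you replicate $R_j\sim j/|E_j|$ times directly so that $|\ZT\setminus\widetilde E_j|<2^{-j}$, and conclude by Borel--Cantelli. Both routes carry the same quantitative balance $\lambda_j^{-2}\cdot R_j\lesssim j/\log N_j$ that you correctly identify as the crux, and both rely on the same probabilistic covering fact (the Zygmund lemma \emph{is} your expectation argument). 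The paper's two-stage organization is slightly more modular — \lem{L12} is reused verbatim in \lem{L13} en route to \cor{C2}, whereas your one-shot construction would have to be reworked there — and it decouples the renormalization of the level $\lambda_0$ from the choice of $w$; your version is a bit more compressed. Two cosmetic remarks: (i) the weight in \e{a3} should really be evaluated at the frequency $\tau(n)$ rather than the position $n$, but in your construction both lie in $(\Pi_{j-1},\Pi_j]$ so $w(\Pi_j)$ dominates either and nothing changes; (ii) the bound $\Pi_j\le N_j^3$ needs $N_j\ge\Pi_{j-1}$, which you are free to impose inductively.
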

\begin{corollary}\label{C2}
	If  an increasing sequence of positive numbers $w(n)$ satisfies \e{4},
then it is not UC-multiplier for the trigonometric system. Namely, there are coefficients $a_n$ satisfying \e{a3} such that the series  
\begin{equation}\label{u25}
\sum_{n=1}^\infty a_ne^{2\pi inx}
\end{equation}
can be rearranged into almost everywhere divergent series.
\end{corollary}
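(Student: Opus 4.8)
The plan is to run a condensation-of-singularities (``gliding hump'') construction whose elementary brick is the permutation supplied by \e{a51}. First I would recast the hypothesis: since $w$ is nondecreasing, Cauchy's condensation test shows that \e{4} is equivalent to $\sum_k 1/w(2^k)=\infty$, and this is the form in which it will be consumed, through a careful selection of a very sparse sequence of scales $N_j=2^{m_j}\nearrow\infty$ together with weights $\theta_j>0$.

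For the brick: given $N$, pick $\sigma\in\Sigma_N$ as in \cor{C0}; unwinding $\|T_{\sigma,N}\|_{L^2\to L^{2,\infty}}\gtrsim\sqrt{\log N}$ produces a polynomial $P$ with spectrum in $\{1,\dots,N\}$, $\|P\|_2\le1$, and a threshold $\lambda$ with $\bigl|\{T_{\sigma,N}P>\lambda\}\bigr|\gtrsim\log N/\lambda^2$; one then argues (here a closer look at the proof of \cor{C0} / \lem{L11} is needed) that $\lambda$ may be kept of size $\sim\sqrt{\log N}$, so that the exceptional set $E$ has $|E|\gtrsim1$. Two invariances make this brick portable: modulation $f\mapsto e^{2\pi iMx}f$ carries its spectrum onto any block $\{M{+}1,\dots,M{+}N\}$ of consecutive integers without affecting any modulus, and the translation $x\mapsto x-t$ replaces $E$ by $E+t$. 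I would then lay down pairwise disjoint consecutive blocks $B_j$ of lengths $N_j$, install on $B_j$ the brick for $N=N_j$ rescaled by $\theta_j$ and translated by a parameter $t_j$, and take $\rho$ to list $B_1,B_2,\dots$ in turn (each block internally in its $\sigma$-order). Disjointness and monotonicity of $w$ give $\sum_n a_n^2 w(n)\le\sum_j\theta_j^2 w(\max B_j)$, which is where the choice of scales from the first paragraph must force convergence; simultaneously, for every $x\in E_j+t_j$ the $\rho$-rearranged partial sums oscillate by at least $c\,\theta_j\sqrt{\log N_j}$ along the stretch of indices occupied by $B_j$. Finally, choosing the $t_j$ independent and uniform on $\ZT$, the events $\{x\in E_j+t_j\}$ are independent with probabilities $\gtrsim1$, so Borel--Cantelli (plus Fubini) yields, for a suitable fixed choice of $(t_j)$ and a.e.\ $x$, membership in $E_j+t_j$ for infinitely many $j$; hence the partial sums of \e{u25} oscillate persistently and diverge a.e.

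The genuinely hard part is the bookkeeping that ties the last two paragraphs together. The naive accounting asks for $\theta_j^2\log N_j$ bounded below (to keep the oscillations from dying) together with $\sum_j\theta_j^2 w(\max B_j)<\infty$ (the Weyl condition), and since $\max B_j\ge N_j$ this seems to demand $\sum_j w(N_j)/\log N_j<\infty$, which is too strong on its own when $w$ is, say, comparable to $\log n$. So the real argument must be subtler: either one exploits that the brick's excursions behave as accumulating jumps (so that $\sum_j\theta_j^2\log N_j=\infty$ already destroys convergence, rather than a pointwise lower bound on each term), or one promotes the brick to the strong form ``$T_{\sigma,N}P\gtrsim\log N$ on a set of measure $\gtrsim1$'' coming from \trm{T3} itself, which cuts the per-block cost to $w(N_j)/(\log N_j)^2$; then selecting the $m_j$ along the ``good'' indices of $\sum_k 1/w(2^k)=\infty$ — those with $w(2^k)\lesssim k^2/\log^2 k$, which must occur infinitely often — spaced so that $m_{j+1}>m_j^2$, makes $\sum_j w(N_j)/(\log N_j)^2\lesssim\sum_j(\log m_j)^{-2}<\infty$. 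Pinning down which of these routes the proof takes, and verifying the ``fat exceptional set'' claim used throughout, is precisely the content that upgrades Galstyan's bound \e{y3} by removing its $\log\log$ factor.
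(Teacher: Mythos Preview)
Your side-by-side block construction is essentially the paper's proof of \cor{C1}, and it does not reach \cor{C2}. With blocks $B_j$ laid end to end, the partial sums of the rearranged series at the two endpoints of $B_j$ differ by the full brick sum, so the only divergence mechanism is an oscillation of size $\theta_j\sqrt{\log N_j}$ inside $B_j$; for a.e.\ divergence you must keep this bounded below along an infinite set of indices, which forces $\sum_j w(N_j)/\log N_j<\infty$ over that set. For $w(n)=\log n$ (which satisfies \e{4}) this sum equals the cardinality of the index set and diverges for any choice of $N_j$, so the scheme collapses. Your Option~B does not rescue it: \trm{T3} gives $\|T_{\sigma,N}P\|_2\gtrsim\log N$ for some $P$ with $\|P\|_2=1$, but an $L^2$ lower bound does \emph{not} yield $T_{\sigma,N}P\gtrsim\log N$ on a set of measure $\gtrsim 1$ (the mass could sit at height $\sqrt{N}$ on a set of measure $\log^2 N/N$), and the stacking argument behind \lem{L12} can only trade height for measure down to $\sqrt{\log N}$, not $\log N$. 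Your ``good indices with $w(2^k)\lesssim k^2/\log^2 k$'' also need not exist: take $w(2^k)=k$.

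What the paper does is genuinely different from side-by-side blocks: it \emph{nests} the bricks so that the positive jumps at different scales accumulate. First the brick is spatially localized to a small interval (\lem{L13}), producing polynomials $U_{k,j,n}$ essentially supported on dyadic-type intervals $\Delta_{k,j}$; the maximal excursion is over the \emph{real part} (\lem{L21}) so the jumps have a sign. Then the rearrangement is built inductively: the polynomials $U_{l,j,n}$ at level $l$ are inserted \emph{inside} the parent block at level $l-1$, right after the partial sum $\sum_{n\le m(l,j)}U_{l-1,\bar j,n}$ that already realizes a positive jump $\gtrsim\sqrt{50^{l-1}}$ on $\Delta_{l,j}$ (see \e{u28}). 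Following the rearrangement at a point $x$, one finds a single partial sum that collects \emph{all} these positive jumps over the levels $k$ with $x\in E_k$, giving the decisive estimate \e{u59}: the real part exceeds $\tfrac12\sum_k \tfrac{50^k}{q_k w(\nu_k)}\ZI_{E_k}(x)$, which diverges a.e.\ by \lem{L15}. This accumulation is exactly what removes the extra $\log\log$ from Galstyan's bound, and it cannot be achieved by laying blocks end to end. The selection of scales also requires \lem{L20}, which extracts indices where $w(\nu_{k+1})\le 100\,w(\nu_k)$ while preserving \e{u49}; this is what keeps the Weyl sum \e{a3} under control despite the spectra living in $(\nu_k,\nu_{k+1}]$.
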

\begin{remark}
	Corollaries \ref{C1}, \ref{C2} can be stated in the terms of real trigonometric series, considering
	\begin{equation*}
	\sum_{n=1}^\infty a_n\cos(nx +\rho_n)
	\end{equation*}
instead of series \e{u25}. In fact, using an elementary argument, one can deduce the real trigonometric versions of Corollaries \ref{C1} and \ref{C2} from their complex analogs.
\end{remark}
\begin{remark}
	We do not know wheather it holds the converse inequality of \e{a51} for any permutation $\sigma$, i.e.
	\begin{equation*}
	\max_{\sigma\in \Sigma_N }\|T_{\sigma,N}\|_{L^2\to L^{2,\infty}}\lesssim \sqrt{\log N}.
	\end{equation*}
\end{remark}
\begin{remark}
It is also not known the estimate like \e{a51} for the Walsh system. Note that our proof of \e{a51} is based on a specific argument that is common only for the trigonometric system and it is not applicable in the case of Walsh system. Namely, we use a logarithmic lower bound for the directional Hilbert transform on the plane due to Demeter \cite{Dem}. 
\end{remark}
\begin{remark}
	Recall the following problem posed Kashin in \cite{Kas}, that became more interesting after the result of \trm{T3}: is there a sequence of positive numbers $\gamma(n)=o(\log n)$ such that for any orthonormal system $\phi_n$ on $(0,1)$ it holds the inequality
	\begin{equation}
	\left(\int_0^1\int_0^1\max_{1\le m \le n}\left|\sum_{k=1}^m\phi_k(x)\phi_k(y)\right|^2dxdy\right)^{1/2}\le \gamma(n)\sqrt n?
	\end{equation}

\end{remark}
\begin{remark}
	Finally, we note that an analogous result of \trm{OT2} for the Franklin system was proved by Gevorkyan \cite{Gev}. In a recent paper of author \cite{Kar} it was proved the analogous of the Theorems \ref{OT1} and \ref{OT2} for the orthonormal systems of non-overlapping martingale-difference (in particular, Haar) polynomials.
\end{remark}

I would like to thank Boris Kashin for the discussions on the subject, as well as the referees for careful reading and valuable remarks improving the quality of the paper.

\section{Directional Hilbert transform and Demeter's example}
The starting point for our construction is an example given by Demeter \cite{Dem} for the directional Hilbert transform. 
To state it we need the notations 
\begin{align}
&B(a,b)=\{\textbf{x}\in \ZR^2:\, a\le  \|\textbf{x}\| <b\},\,0\le a<b\le \infty,\\
&\Gamma_\theta=\{\textbf{x}=(x_1,x_2)\in \ZR^2:\,x_1\cos \theta +x_2\sin\theta \ge 0\}.
\end{align}
For a rapidly decreasing function $f$ and a unit vector $(\cos\theta , \sin \theta  ),\,\theta\in [0,2\pi),$ we define
\begin{equation}
H_\theta f(\textbf{x})=\pv\frac{1}{\pi}\int_{\ZR } \frac{f(\textbf{x}-t(\cos \theta,\sin  \theta ))}{t}dt,\quad \textbf{x}=(x_1,x_2)\in \ZR^2,
\end{equation}
which is the one dimensional Hilbert transform corresponding to the direction
$\theta$. It is well known this operator can be extended to a bounded
operator on $L^2(\ZR^2)$. For the collection of uniformly distributed unit vectors
\begin{equation}
\Theta=\{\theta_k=\pi k/N,\, k=1,2,\ldots, N\},
\end{equation}
consider the operator
\begin{equation*}\label{HU}
H_\Theta^*f(\x)=\sup_{\theta\in \Theta}|H_\theta f(\x)|.
\end{equation*}
The result of the paper \cite{Dem} is the lower bound $\|H_\Theta^*\|_{2\to 2}\gtrsim \log N$. We find suitable to give a detailed proof of this result. 
\begin{lemma}[Demeter, \cite{Dem}]
	For any integer $N>N_0$, where $N_0$ is an absolute constant,  the function 
	\begin{equation}\label{x8}
	f(\x)=\frac{1}{\|\x\|}\cdot \ZI_{B(10N^{-9},N^{-8})}(\x)
	\end{equation}
	satisfies the inequality
	\begin{equation}\label{x9}
	\|H_\Theta^*(f)\|_2\gtrsim \log N\cdot \|f\|_2.
	\end{equation}
\end{lemma}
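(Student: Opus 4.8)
The plan is to exploit the near-singularity of $f$ at the origin together with the fact that the vectors in $\Theta$ are equally spaced at angular scale $\sim 1/N$, so that the Hilbert transforms $H_{\theta_k}$ probe essentially disjoint angular sectors and their contributions add up. Fix a point $\x$ in an annulus $B(r, 2r)$ with $r$ somewhat larger than $N^{-8}$ — concretely, $\|\x\| \sim N^{-8}\cdot M$ for a range of dyadic scales $M$ between $1$ and a power of $N$, so that the support ball $B(10N^{-9}, N^{-8})$ is seen from $\x$ under a small angular aperture $\sim 1/M$. For such $\x$, and for each direction $\theta_k$ whose line through $\x$ passes within distance $\sim N^{-8}$ of the origin (there are $\sim M$ consecutive values of $k$ with this property, since the angular spacing is $\pi/N$ and $N^{-8}\cdot N / \|\x\| \sim N/(MN^{8})\cdot N^8 \cdot \ldots$ — I will pin the arithmetic down so that the count is comparable to $M$), the integral defining $H_{\theta_k}f(\x)$ picks up the $1/\|t\|$ singularity of $f$ along a segment of length $\sim N^{-8}$ at distance $\sim \|\x\|$, contributing $\sim \frac{1}{\|\x\|}\cdot \log(\|\x\|/N^{-9}) \sim \frac{\log N}{\|\x\|}$ in absolute value (the logarithm coming from integrating $dt/t$ from $N^{-9}$ up to $N^{-8}$, and more importantly from the choice $10N^{-9}$ as inner radius, which creates a genuine $\log N$ because $N^{-8}/N^{-9}=N$).

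Next I would argue that these contributions do not cancel: for $\x$ in a suitable sub-annulus, the directions $\theta_k$ that are "active" all produce a contribution of the \emph{same sign}, because the principal-value kernel $1/t$ integrated against the one-signed function $f$ over the part of the line inside the support ball has a definite sign determined by which side of $\x$ the origin lies on. Hence $H_\Theta^* f(\x) = \sup_k |H_{\theta_k} f(\x)| \gtrsim \frac{\log N}{\|\x\|}$ on a set of $\x$ of measure $\sim \|\x\|^2$ at each relevant scale. Actually one does not even need the sup over many $k$: a single well-chosen $k=k(\x)$ already gives $|H_{\theta_{k(\x)}} f(\x)| \gtrsim \frac{\log N}{\|\x\|}$, which suffices for the lower bound on $H_\Theta^*$. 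Integrating, $\int_{B(N^{-8},\, N^{-8}\sqrt{\log N})} |H_\Theta^* f|^2 \gtrsim \log^2 N \int \frac{d\x}{\|\x\|^2} \gtrsim \log^2 N \cdot \log\log N$ — but I should be more careful and simply integrate over a single dyadic annulus to get $\|H_\Theta^* f\|_2^2 \gtrsim \log^2 N$. On the other hand $\|f\|_2^2 = \int_{B(10N^{-9}, N^{-8})} \frac{d\x}{\|\x\|^2} \sim \log(N^{-8}/N^{-9}) = \log N$, so $\|f\|_2 \sim \sqrt{\log N}$, and therefore $\|H_\Theta^* f\|_2 \gtrsim \log N \cdot \frac{\sqrt{\log N}}{\sqrt{\log N}} = \sqrt{\log N}\cdot\sqrt{\log N}$; matching exponents, $\|H_\Theta^* f\|_2 \gtrsim \log N \cdot \|f\|_2$ requires $\|H_\Theta^* f\|_2^2 \gtrsim \log^2 N \cdot \|f\|_2^2 \sim \log^3 N$, so I actually need the pointwise bound $|H_\Theta^* f(\x)| \gtrsim \frac{\log N}{\|\x\|}$ to hold on a region of $\x$ of measure $\sim \|\x\|^2 \log N$, i.e. across $\sim \log N$ dyadic scales simultaneously — which is exactly what the wide annulus $B(10N^{-9}, N^{-8})$ (spanning a factor $N$, hence $\sim \log N$ dyadic scales) provides.

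The main obstacle, and the step I would spend the most care on, is the \textbf{non-cancellation estimate}: showing that for $\x$ in the right region there is a direction $\theta_k \in \Theta$ for which $|H_{\theta_k} f(\x)|$ is genuinely of size $\frac{\log N}{\|\x\|}$ rather than being killed by the oscillation of the principal value. Concretely one must choose $\theta_{k(\x)}$ so that the line $\x + \ZR(\cos\theta_{k(\x)}, \sin\theta_{k(\x)})$ enters the annulus $B(10N^{-9}, N^{-8})$ but stays on one side, bounded away from the origin by $\gtrsim N^{-9}$, so that on the intersection of this line with the support the variable $t$ ranges over an interval $[t_0, t_1]$ with $t_0 \sim \|\x\|$ and $t_1 - t_0 \sim N^{-8}$ while $t$ never changes sign — then $H_{\theta_k}f(\x) = \frac1\pi\int_{t_0}^{t_1} \frac{f(\x - t\omega)}{t}\,dt$ is an integral of a positive integrand, of size $\gtrsim \frac{1}{\|\x\|}\int \frac{ds}{\text{dist to origin}} \gtrsim \frac{\log N}{\|\x\|}$. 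Making the geometry of "which $k$ works for which $\x$" precise — counting lattice-angle lines through a small ball and controlling the distance from each such line to the origin — is the technical heart of the argument; the rest is the $L^2$ bookkeeping sketched above. I would also double-check the exponents $-9$ and $-8$: they are chosen precisely so that (i) the ball is small enough that the $N$ directions resolve it, and (ii) the ratio of outer to inner radius is exactly $N$, forcing the two independent logarithms ($\log N$ from $\|f\|_2$ and an extra $\log N$ from the scale-span) that together yield the gain $\|H_\Theta^* f\|_2 / \|f\|_2 \gtrsim \log N$.
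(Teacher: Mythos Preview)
Your geometric picture is essentially inverted relative to the paper's, and the inversion leaves a real gap in your bookkeeping. You place $\x$ \emph{outside} the support annulus, at $\|\x\|\sim M N^{-8}$ with $M>1$, and look for a direction $\theta_k$ whose line from $\x$ grazes the support. Two things go wrong in the sketch. First, the pointwise size $\frac{1}{\|\x\|}\log(\|\x\|/N^{-9})$ is not what this picture produces: after pulling out $1/t\sim 1/\|\x\|$, the remaining integral $\int ds/\|\x-s\omega\|$ along the chord is $\sim\log(N^{-8}/d)$, where $d$ is the distance from the line to the origin; for the nearest $\theta_k$ one only knows $d\lesssim\|\x\|/N$, so the honest lower bound is $\frac{1}{\|\x\|}\log(N^{-7}/\|\x\|)$, which \emph{decreases} as $\|\x\|$ grows, not increases. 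Second, and this is the actual gap, your final accounting says the $\log N$ dyadic scales of $\x$ you need are ``exactly what the wide annulus $B(10N^{-9},N^{-8})$ provides'' --- but that annulus is the support of $f$, not where your $\x$ lives. With $\x$ outside, the relevant range is $N^{-8}\lesssim\|\x\|\lesssim N^{-7}$ (beyond that no $\theta_k$ hits the support at all); integrating the corrected pointwise bound over that exterior annulus does still give $\log^3 N$, so the outside route is salvageable, but not as you have written it. Incidentally, your sign worry is misplaced in your own setup: once $\x$ is outside the support the whole chord lies on one side of $\x$, so $t$ never changes sign regardless of whether the line threads the inner hole.

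The paper instead rescales to $h(\x)=\|\x\|^{-1}\ZI_{B(10,N)}$ and takes $\x$ \emph{inside} the annulus, with $10^5\le\|\x\|\le N/3$. For $\theta_k$ closest to the radial direction $\u=\x/\|\x\|$ (so $\|\u_k-\u\|\le\pi/N$), the line $\x-t\u_k$ passes through the inner hole; one first replaces $\u_k$ by $\u$ at the cost of an $O(1/N)$ error, and then the principal-value integral $\int_E\frac{dt}{t\cdot\big|\|\x\|-t\big|}$ over the two segments $E=(A,a]\cup[b,B)$ is computed exactly by partial fractions. The dominant contribution is $\log\|\x\|$, coming from the $\int dt/t$ pieces, and one obtains $H_\Theta^* h(\x)\gtrsim\frac{\log\|\x\|}{\|\x\|}$ throughout $B(10^5,N/3)$; integrating in polar coordinates gives $\|H_\Theta^* h\|_2^2\gtrsim\log^3 N$. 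Working inside and reducing to the exact radial direction sidesteps all the angular case analysis your outside approach would need.
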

\begin{proof}
	A change of variable enables to prove \e{x9} for the function
	\begin{equation*}
	h(\x)=\frac{1}{\|\x\|}\cdot \ZI_{B(10,N)}(\x)
	\end{equation*}
instead of $f$.	Fix a point $\x$ in the upper half-plane  $\ZR^2_+$ satisfying 
\begin{align}
10^5\le \|\x\|\le N/3,\label{u5}
\end{align}
and consider the unit vector $\textbf{u}=\frac{\x}{\|\x\|}=(\cos \theta,\sin  \theta )$. Clearly, there is a unit vector $\textbf{u}_k=(\cos\theta_k,\sin\theta_k) $ such that 
\begin{equation}\label{y4}
\|\textbf{u}_k-\textbf{u}\|\le \pi/N.
\end{equation}
A geometric argument shows that the line $\x-t\textbf{u}_k$, $t\in \ZR$, has four intersection points with the boundary of $B(10,N)$. Moreover, we have
\begin{equation}\label{x10}
E=\{t\in \ZR:\, \x-t\textbf{u}_k\in B(10,N)\}=\big(A,a\big]\cup \big[b,B\big),
\end{equation}
where the numbers $A<a<b<B$ satisfy
\begin{align}
&\|\x-A\textbf{u}_k\|=\|\x-B\textbf{u}_k\|=N,\quad \|\x-a\textbf{u}_k\|=\|\x-b\textbf{u}_k\|=10,\label{y11}\\
&t\in (a,b) \Leftrightarrow \|\x-t\textbf{u}_k\|<10,\label{u8}\\
&t\not \in [A,B] \Leftrightarrow \|\x-t\textbf{u}_k\|>N.\label{u9}
\end{align}
For any $t\in E$ we have
\begin{equation}\label{y10}
|t|\le \|\x\|+\|\x-t\textbf{u}_k\|\le N/3+N=4N/3.
\end{equation}
Based on \e{y4}, \e{y11} and \e{y10}, we claim that
\begin{align}
&\|\x\|-15\le a\le \|\x\|-5,\label{u1}\\
&\|\x\|+5\le b\le \|\x\|+15,\label{u2}\\
&||A|-N|\le N/3,\label{u3}\\
&||B|-N|\le N/3.\label{u4}
\end{align}
Indeed, first observe that $t=\|\x\|\not\in E$, since
\begin{align}
\big\|\x-\|\x\|\textbf{u}_k\big\|\le \big\|\x-\|\x\|\textbf{u}\big\|+\|\x\|\|\textbf u-\textbf{u}_k\|=\|\x\|\|\textbf u-\textbf{u}_k\|\le 2.
\end{align}
So by \e{u8} we conclude $a<\|\x\|<b$. Thus, using also
\begin{align}
\big|10-|\|\x\|-a|\big|=\big|\|\x-a\textbf{u}_k\|-\|\x-a\textbf{u}\|\big|\le |a|\cdot \|\textbf u-\textbf{u}_k\|\le 5,
\end{align}
we easily get \e{u1}. Similarly we will have \e{u2}. From
\begin{align}
\big||A|-N\big|= \big|\|A\textbf{u}_k\|-\|\x-A\textbf{u}_k\|\big|\le \|\x\|\le\frac{N}{3},
\end{align}
and the same bound for $B$ we obtain \e{u3} and \e{u4} respectively. If $t\in E$, then by \e{y4} and \e{y10} we have 
\begin{equation*}
\big|\|\x\|-t\big|=\|\x-t\textbf{u}\|\ge \|\x-t\textbf{u}_k\|-|t|\|\textbf u-\textbf{u}_k\|\ge  10-5=5,
\end{equation*}
and therefore,
\begin{equation}
\|\textbf{x}-t\textbf{u}_k\|\ge \|\textbf{x}-t\textbf{u}\|-|t|\|\textbf{u}-\textbf{u}_k\|\ge \big|\|\x\|-t\big|-4,8\ge \frac{\big|\|\x\|-t\big|}{25}.
\end{equation}
Thus we get
\begin{equation*}
\left|\frac{1}{\|\textbf{x}-t\textbf{u}\|}-\frac{1}{\|\textbf{x}-t\textbf{u}_k\|}\right|\le \frac{|t|\|\textbf{u}-\textbf{u}_k\|}{\|\textbf{x}-t\textbf{u}\|\|\textbf{x}-t\textbf{u}_k\|}\le \frac{25\pi|t|}{N\big|t-\|\x\|\big|^2}
\end{equation*}
and hence, using also \e{x10}, \e{u1} and \e{u2},
\begin{align}
&\left|\pi\cdot H_{\theta_k} h(\textbf{x})-\pv\int_{E } \frac{1}{t\cdot \|\textbf{x}-t\textbf{u}\|}dt\right|\label{u6}\\
&\qquad\qquad\le \frac{25\pi}{N} \int_{E } \frac{1}{|t-\|\textbf{x}\||^2}dt\le\frac{50\pi}{N} \int_{5 }^\infty \frac{1}{t^2}dt= \frac{10\pi}{N}.
\end{align}
On the other hand
	\begin{align}
	\pv\int_{E } &\frac{1}{t\cdot \|\textbf{x}-t\textbf{u}\|}dt\\
	&=\pv \int_{A}^{a}\frac{dt}{t\cdot \|\textbf{x}-t\textbf{u}\|}+\pv \int_{b}^{B}\frac{dt}{t\cdot \|\textbf{x}-t\textbf{u}\|}\\
	&=\pv \int_{A}^{a}\frac{dt}{t\cdot (\|\textbf{x}\|-t)}+\pv \int_{b}^{B}\frac{dt}{t\cdot (t-\|\textbf{x}\|)}\\
	&=\pv \frac{1}{\|\x\|}\int_{A}^{a}\left(\frac{1}{\|\textbf{x}\|-t}+\frac{1}{t}\right)dt\\
	&\qquad+\frac{1}{\|\x\|} \int_{b}^{B}\left(\frac{1}{t-\|\textbf{x}\|}-\frac{1}{t}\right)dt\\
	&=\frac{1}{\|\x\|}\big(\log \big|\|\x\|-A\big|-\log \big|\|\x\|-a\big|+\log |a|-\log |A|\big)\\
	&\qquad +\frac{1}{\|\x\|}\big(\log \big|B-\|\x\|\big|-\log \big|b-\|\x\|\big|+\log |b|-\log |B|\big).
	\end{align}
Using \e{u5}, \e{u3} and \e{u4} we can say that 
	\begin{equation*}
	\log \big|\|\x\|-A\big|, \quad \log \big|B-\|\x\|\big|,\quad \log |A|,\quad \log |B|
	\end{equation*}
 are equal $\log N+c$ for different constants $c\in [\log(1/3),\log(5/3)]$. From \e{u1} and \e{u2} we get $\log \big|\|\x\|-a\big|,\,\log \big|b-\|\x\|\big|\in [\log 5,\log 15]$. While for $\log |a|$ and $\log |b|$ we have a lower bound by $\log( \|\x\|/2)$  in view of \e{u5}. All these imply
 \begin{equation}\label{u7}
 \pv\int_{E } \frac{1}{t\cdot \|\textbf{x}-t\textbf{u}\|}dt\ge\frac{2\log (10^{-5}\|\x\|)}{\|\x\|}.
 \end{equation}
Combining \e{u6} and \e{u7}, we obtain
	\begin{align}
	\pi\cdot H^*_{\Theta}h(\textbf{x})\ge \pi\cdot\left|H_{\theta_k} h(\textbf{x})\right|\ge \frac{\log  (10^{-5}\|\x\|)}{\|\x\|}-\frac{5\pi}{N}.
	\end{align}
	for all $\x\in\ZR^2_+$ satisfying \e{u5}. Thus, a simple integration shows that
	\begin{align*}
	\|H^*_{\Theta}(h)\|_2^2&\gtrsim \int_{B(10^{5},N/3)\cap \ZR^2_+}|H_{\theta_k}(h)|^2\\
&\gtrsim \int_{10^{5}}^{N/3}\left(\frac{\log^2 (10^{-5}r)}{r}- \frac{10\pi\log (10^{-5}r)}{N}+\frac{25\pi^2}{N^2}r\right)dr\\
&\gtrsim \log^{3}N
	\end{align*}
	and $\|h\|_2\lesssim \sqrt{\log N}$ for $N>N_0$. This implies \e{x9}.
\end{proof}

\section{Smooth modification of the function $f$ }
Since the one dimensional Hilbert transform is the multiplier operator of $i\cdot\sign x$,
for any direction $\theta=(\cos  \theta, \sin \theta )$ we have
\begin{equation}
\widehat{H_\theta f}(\x)=i\cdot \sign (x_1\cos \theta  +x_2\sin  \theta
)\widehat{f}(\x).
\end{equation}
Recall the multiplier operator $T_D$ corresponding to a region $D\subset \ZR^2$ defined by
\begin{equation*}
\widehat {T_D(f)}=\ZI_D\cdot \hat f.
\end{equation*}
One can check that
\begin{equation}\label{2-27}
T_{\Gamma_\theta}(f)=\frac{f-iH_\theta f}{2}.
\end{equation}
Let us denote
\begin{equation}
T^*f=\sup_{\theta \in \Theta }|T_{\Gamma_\theta} f|.
\end{equation}
So the bound \e{x9} is equivalent to the inequality
\begin{equation}\label{x5}
\|T^*(f)\|_{2}\gtrsim \log N\cdot \|f\|_2,
\end{equation}
which will be used in the next sections.
In this section we examine some properties of function \e{x8}.
\begin{lemma}
The function \e{x8} satisfies the relations
	\begin{align}
&\|f\|_1\sim N^{-8},\quad \|f\|_2\sim \sqrt{\log N},\label{x45}\\
&\omega_2(\delta,f)=\sup_{\|\h\|<\delta}\left(\int_{\ZR^2}|f(\x+\h)-f(\x)|^2d\x\right)^{1/2}\lesssim N^5\sqrt{\delta}.\label{x14}
\end{align}
for any $0<\delta<N^{-10}$.
\end{lemma}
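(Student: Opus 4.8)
The plan is to obtain \e{x45} by a direct computation in polar coordinates, and \e{x14} by splitting $\ZR^2$ according to whether both of the translates $\x$ and $\x+\h$ lie in the annular support $D:=B(10N^{-9},N^{-8})$ of $f$, or only one of them does.

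For \e{x45} I would simply note that $f=\|\x\|^{-1}\ZI_D$, so in polar coordinates $\|f\|_1=\int_{10N^{-9}}^{N^{-8}}r^{-1}\cdot 2\pi r\,dr=2\pi(N^{-8}-10N^{-9})\sim N^{-8}$ and $\|f\|_2^2=\int_{10N^{-9}}^{N^{-8}}r^{-2}\cdot 2\pi r\,dr=2\pi\log(N/10)\sim\log N$, which gives both relations in \e{x45}.

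For \e{x14} I would fix $\h$ with $\|\h\|<\delta<N^{-10}$, put $D_\h=D-\h$ so that $f(\cdot+\h)=\|\cdot+\h\|^{-1}\ZI_{D_\h}$, and treat two pieces. On $D\cap D_\h$ both points lie in the annulus, and there
\begin{equation*}
|f(\x+\h)-f(\x)|=\left|\frac{1}{\|\x+\h\|}-\frac{1}{\|\x\|}\right|\le\frac{\|\h\|}{\|\x\|\,\|\x+\h\|}\le\frac{2\delta}{\|\x\|^{2}},
\end{equation*}
where $\|\x+\h\|\ge\|\x\|-\delta\ge\|\x\|/2$ because $\|\x\|\ge 10N^{-9}\ge 2\delta$ on $D$; integrating in polar coordinates yields $\int_{D\cap D_\h}|f(\x+\h)-f(\x)|^{2}\le 4\delta^{2}\int_{10N^{-9}}^{N^{-8}}\frac{2\pi\,dr}{r^{3}}\lesssim\delta^{2}N^{18}$. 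On the symmetric difference $D\triangle D_\h$ exactly one of $\x,\x+\h$ belongs to $D$, so $|f(\x+\h)-f(\x)|$ equals $\|\x\|^{-1}$ or $\|\x+\h\|^{-1}$, with the relevant norm lying in $[10N^{-9},N^{-8}]$; moreover $D\triangle D_\h\subset S_1\cup S_2$, where $S_1$ and $S_2$ are the $\delta$-neighbourhoods of the bounding circles $\{\|\x\|=10N^{-9}\}$ and $\{\|\x\|=N^{-8}\}$, of areas $|S_1|\lesssim\delta N^{-9}$ and $|S_2|\lesssim\delta N^{-8}$. Since the integrand $|f(\x+\h)-f(\x)|^{2}$ is $\lesssim N^{18}$ on $S_1$ and $\lesssim N^{16}$ on $S_2$, I obtain $\int_{D\triangle D_\h}|f(\x+\h)-f(\x)|^{2}\lesssim N^{18}\cdot\delta N^{-9}+N^{16}\cdot\delta N^{-8}\lesssim\delta N^{9}$. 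Adding the two bounds and using $\delta^{2}N^{18}\le\delta N^{9}$ (valid since $\delta<N^{-10}<N^{-9}$) gives $\omega_2(\delta,f)^{2}\lesssim\delta N^{9}$, i.e. $\omega_2(\delta,f)\lesssim N^{5}\sqrt\delta$.

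The computation is elementary throughout, so there is no genuine obstacle; the one point worth care is the shell $S_1$ near the \emph{inner} circle $\{\|\x\|=10N^{-9}\}$, where $f$ attains its maximal value $\sim N^{9}$. It is precisely this shell that supplies the dominant contribution $\delta N^{9}$ and thereby pins down the exponent $5$ in \e{x14}; one only has to verify that the other two terms — the bulk contribution $\delta^{2}N^{18}$ on $D\cap D_\h$ and the outer shell $S_2$ — are indeed smaller throughout the range $\delta<N^{-10}$.
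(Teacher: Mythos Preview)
Your proof is correct and follows essentially the same route as the paper: compute \e{x45} directly in polar coordinates, and for \e{x14} split into a ``bulk'' region where both $\x$ and $\x+\h$ lie in the annulus (your $D\cap D_\h$, the paper's $B(10N^{-9}+\delta,N^{-8}-\delta)$) and the two boundary shells near the inner and outer circles. Your bookkeeping is in fact a little sharper---by keeping the pointwise bound $2\delta/\|\x\|^{2}$ on the bulk and separating the two shells you obtain $\omega_2(\delta,f)^2\lesssim\delta N^{9}$ rather than the paper's $\delta N^{10}$---but this refinement is immaterial for the stated conclusion $\omega_2(\delta,f)\lesssim N^{5}\sqrt{\delta}$.
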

\begin{proof}
	Equations \e{x45} are results of a simple integration. Fix a vector $\h$, $\|\h\|<\delta$. Observe that 
	\begin{equation}\label{x47}
	\x\in B(10N^{-9}+\delta, N^{-8}-\delta)
	\end{equation} 
	implies $\x+\h\in B(10N^{-9},N^{-8})$ and so
	\begin{equation*}
	|f(\x+\h)-f(\x)|=\left|\frac{1}{\|\x+\h\|}-\frac{1}{\|\x\|}\right|\le \frac{\|\h\|}{\|\x+\h\|\cdot \|\x\|}\le N^{18}\delta.
	\end{equation*}
	Using this we get
	\begin{align}\label{x49}
	\int_{B(10N^{-9}+\delta, N^{-8}-\delta)}&|f(\x+\h)-f(\x)|^2d\x\\
&\qquad \qquad 	\lesssim |B(10N^{-9}+\delta, N^{-8}-\delta)|\cdot N^{36}\delta^2\lesssim N^{20}\delta^2.
	\end{align}
If 
\begin{equation}\label{x48}
\x\in B(10N^{-9}-\delta, 10N^{-9}+\delta)\cup B(N^{-8}-\delta, N^{-8}+\delta),
\end{equation}
 then $|f(\x+\h)-f(\x)|\le 2\|f\|_\infty\lesssim N^{9}$ and so 
\begin{equation}\label{x50}
\int_{B(10N^{-9}+\delta, 10N^{-9}-\delta)\cup B(N^{-8}-\delta, N^{-8}+\delta)}|f(\x+\h)-f(\x)|^2dx\lesssim N^{10}\delta.
\end{equation}
If $\x$ is outside of the regions that we have in \e{x47} and \e{x48}, then $f(\x+\h)=f(\x)=0$. So combining \e{x49} and \e{x50}, we obtain \e{x14}.
\end{proof}
 It is well known there exists a spherical function $K\in L^\infty(\ZR^2)$ satisfying the relations
\begin{align}
&\int_{\ZR^2}K(\textbf{t})d\textbf{t}=1,\label{x28}\\
&\supp\hat K\subset B(0,1).\label{x19}\\
&0<K(\x)\le \frac{c}{|\x|^{50}},\label{x30}
\end{align} 
where $c>0$ is a constant. Indeed, chose a spherical function $\phi\in C^\infty (\ZR^2)$ with $\supp \phi \subset B(0,1/2)$ and define $K(x)$ by $\hat K=c_1(\phi\ast \phi)$. Clearly we will have \e{x19}, as well as \e{x30} for any power instead of $50$. The relation \e{x28} will be satisfied after a suitable choice of the constant $c_1$. 
We are going to replace the  function \e{x8} by the function 
\begin{equation}\label{x18}
g(\x)=\int_{\ZR^2}f(\x-\textbf{t})\ZK(\textbf{t})d\textbf{t}.
\end{equation}
where
\begin{equation*}
\ZK(\x)=N^{30}K\left(N^{15}\x\right).
\end{equation*}

\begin{lemma}\label{L1}
For large enough $N$ the function \e{x18} is spherical and satisfies the relations 
\begin{align}
&\supp \hat g\subset B(0,N^{15}),\label{x6}\\
&	\|g-f\|_2\lesssim\frac{1}{N^2},\label{x17}\\
&\|T^*(g)\|_{2}\gtrsim \log N\cdot \|g\|_2.\label{x7}
\end{align}
\end{lemma}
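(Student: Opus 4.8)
The plan is to verify the three assertions in turn, the first two being essentially routine and the last following from \e{x5} by a soft perturbation. Sphericity of $g$ is immediate: $f$ from \e{x8} and the kernel $\ZK$ are both spherical, and the convolution of spherical functions is spherical. For the frequency localisation \e{x6} I would write $\hat g=\hat f\cdot\widehat{\ZK}$ and compute $\widehat{\ZK}$: since $\ZK(\x)=N^{30}K(N^{15}\x)$ is the $L^1$-normalised dilate of $K$ by the factor $N^{15}$, its Fourier transform is $\widehat{\ZK}(\xi)=\hat K(\xi/N^{15})$, so $\supp\widehat{\ZK}\subset B(0,N^{15})$ by \e{x19} and therefore $\supp\hat g\subset\supp\widehat{\ZK}\subset B(0,N^{15})$. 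The same computation records $\int_{\ZR^2}\ZK=\widehat{\ZK}(0)=\hat K(0)=1$ via \e{x28}, which is needed below.

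For \e{x17} I would use $\int\ZK=1$ to write $g(\x)-f(\x)=\int_{\ZR^2}\big(f(\x-\t)-f(\x)\big)\ZK(\t)\,d\t$ and apply Minkowski's integral inequality, obtaining $\|g-f\|_2\le\int_{\ZR^2}\omega_2(\|\t\|,f)\,\ZK(\t)\,d\t$ with $\omega_2$ as in \e{x14}. Then split the $\t$-integral at $\|\t\|=N^{-14}$. On $\{\|\t\|<N^{-14}\}$, since $N^{-14}<N^{-10}$, the estimate \e{x14} gives $\omega_2(\|\t\|,f)\lesssim N^5\sqrt{\|\t\|}\le N^{-2}$, while $\int_{\|\t\|<N^{-14}}\ZK\le 1$, so this part contributes $\lesssim N^{-2}$. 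On $\{\|\t\|\ge N^{-14}\}$ I would use the crude bound $\omega_2(\|\t\|,f)\le 2\|f\|_2\lesssim\sqrt{\log N}$ from \e{x45} together with the decay \e{x30}, which there gives $\ZK(\t)\lesssim N^{30}(N^{15}\|\t\|)^{-50}=N^{-720}\|\t\|^{-50}$ and hence $\int_{\|\t\|\ge N^{-14}}\ZK\lesssim N^{-48}$; so this part contributes $\lesssim N^{-48}\sqrt{\log N}\ll N^{-2}$. Adding the two pieces gives \e{x17}. The only place needing any care is the choice of splitting radius: it must be small enough that $N^5\sqrt{\|\t\|}\lesssim N^{-2}$ on the inner region, yet large enough that the polynomial tail of $\ZK$ past it is negligible even after multiplication by $\|f\|_2\sim\sqrt{\log N}$; the exponent $50$ in \e{x30} leaves enormous slack, so this is bookkeeping rather than a real obstacle.

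For the main claim \e{x7} I would first note the trivial bound $\|T^\ast h\|_2\le\sqrt N\,\|h\|_2$ for $h\in L^2$: indeed $T^\ast h\le\big(\sum_{\theta\in\Theta}|T_{\Gamma_\theta}h|^2\big)^{1/2}$, each $T_{\Gamma_\theta}$ is a Fourier multiplier with symbol of modulus at most $1$ (see \e{2-27}) hence an $L^2$-contraction, and $|\Theta|=N$. Since each $T_{\Gamma_\theta}$ is linear, $|T_{\Gamma_\theta}f|\le|T_{\Gamma_\theta}g|+|T_{\Gamma_\theta}(g-f)|\le T^\ast g+T^\ast(g-f)$ pointwise, so $T^\ast f\le T^\ast g+T^\ast(g-f)$, and taking $L^2$ norms (monotonicity for nonnegative functions together with the triangle inequality) gives
\begin{equation*}
\|T^\ast g\|_2\ \ge\ \|T^\ast f\|_2-\|T^\ast(g-f)\|_2\ \ge\ \|T^\ast f\|_2-\sqrt N\,\|g-f\|_2 .
\end{equation*}
By \e{x5} and \e{x45}, $\|T^\ast f\|_2\gtrsim\log N\cdot\|f\|_2\sim(\log N)^{3/2}$, whereas $\sqrt N\,\|g-f\|_2\lesssim N^{-3/2}$ by \e{x17}; hence $\|T^\ast g\|_2\gtrsim(\log N)^{3/2}$ for $N$ large. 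Finally $\|g\|_2\le\|f\|_2+\|g-f\|_2\lesssim\sqrt{\log N}$ by \e{x45} and \e{x17}, so $\|T^\ast g\|_2\gtrsim(\log N)^{3/2}=\log N\cdot\sqrt{\log N}\gtrsim\log N\cdot\|g\|_2$, which is \e{x7}. I expect no serious difficulty overall: the substance is the exponent accounting in \e{x17}, after which \e{x7} is a routine perturbation off \e{x5}.
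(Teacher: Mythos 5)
Your proof is correct and follows the paper's strategy: sphericity and \e{x6} are immediate from the scaling of $\hat\ZK$, \e{x17} is obtained by combining the modulus-of-continuity bound \e{x14} near the origin with the polynomial decay \e{x30} of the mollifier in the tail, and \e{x7} is a perturbation of \e{x5} via \e{x17} together with a crude $L^2$ bound on $T^*$. The only differences are organizational and harmless: for \e{x17} you apply Minkowski's integral inequality globally and split the $\t$-integral at $N^{-14}$, while the paper first truncates $g$ spatially at radius $1$ and then splits; and for the perturbation you use $\|T^*\|_{2\to2}\le\sqrt N$ in place of the paper's cruder $\sum_k\|T_{\Gamma_{\theta_k}}(g-f)\|_2\le N\|g-f\|_2$, both amply sufficient.
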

\begin{proof}
The function $g$ is spherical, since $f$ and $\ZK$ are spherical. Applying Fourier transform to the convolution \e{x18}, we get
	\begin{align}
	\hat g(\x )=\hat f(\x )\cdot \hat \ZK(\x )=\hat f(\x )\cdot \hat K\left(\frac{\x }{N^{15}}\right)\label{x31}
	\end{align}
so \e{x19} immediately implies \e{x6}. Write $g$ in the form
\begin{align}
g(\x )&=\int_{\ZR^2}f(\x -\textbf{t})\ZK(\textbf{t})d\textbf{t}\cdot \ZI_{B(0, 1)}(\x )\\
&\qquad+\int_{\ZR^2}f(\x -\textbf{t})\ZK(\textbf{t})d\t\cdot \ZI_{B(1,\infty )}(\x )\\
&=I_1(\x )+I_2(\x ).
\end{align}
Applying \e{x45} and \e{x30}, we can roughly estimate
\begin{equation}
|I_2(\x )|\lesssim \frac{ \ZI_{B(1,\infty )}(\x )}{N^{2}\|\x \|},
\end{equation}
then after a simple integration we get 
\begin{equation}\label{x55}
\|I_2\|_2\lesssim \frac{1}{N^2}.
\end{equation}
Choosing $\delta=N^{-14}$,  for every $\x \in B(0,1)$ we can write
	\begin{align}
	|I_1(\x )-f(\x )|&\le \int_{\ZR^2}|f(\x -\textbf{\textbf{t}})-f(\x )|\ZK(\textbf{t})d\textbf{t}\\
	&=\int_{B(0,\delta)}|f(\x -\textbf{t})-f(\x )|\ZK(\textbf{t})d\textbf{t}\\
	&\qquad +\int_{B(\delta,\infty)}|f(\x -\textbf{t})-f(\x )|\ZK(\textbf{t})d\textbf{t}\\
	&=I_{11}(\x )+I_{12}(\x ).
	\end{align}
From \e{x14} and \e{x28} it follows that
	\begin{align}
	\|I_{11}\|_2&\le \left(\int_{B(0,\delta)}\ZK(\textbf{t})\int_{\ZR^2}|f(\x -\textbf{\textbf{t}})-f(\x )|^2d\x d\textbf{\textbf{t}}\right)^{1/2}\\
	&\le \omega_2(\delta,f)\lesssim N^5\sqrt{\delta}\le \frac{1}{N^2}.\label{x15}
	\end{align}
Applying \e{x30} and the bound $\|f\|_\infty <N^{9}$, the second integral can be again roughly estimated as follows
	\begin{align}
|I_{12}(\x )|&\le 2N^{30}\|f\|_\infty \int_{B(\delta,\infty)}K(N^{15}\textbf{t})d\t\\
&\lesssim N^{39}\int_{B(\delta,\infty)}\frac{1}{(N^{15}|\t|)^{50}}dt\lesssim \frac{1}{N^{2}}
	\end{align}
and so
\begin{equation}\label{x16}
\|I_{12}\cdot \ZI_{B(0,1)}\|_2\lesssim \frac{1}{N^{2}}.
\end{equation}
From \e{x55}, \e{x15} and \e{x16} we obtain \e{x17}. Finally, having \e{x17}, \e{x5} and  \e{x99}, we get
\begin{align}
\|T^*(g)\|_{2}&\ge \|T^*(f)\|_{2}-\|T^*(g-f)\|_{2}\\
&\ge  \|T^*(f)\|_{2}-\sum_{k=1}^N\left\|T_{\Gamma_{\theta_k}}(g-f)\right\|_2\\
&\ge \|T^*(f)\|_{2}-c\\
&\gtrsim \log N\cdot  \|g\|_2.
\end{align}
This completes the proof of lemma.
\end{proof}

\section{A basic sequence of orthogonal functions}
In the sequel we always suppose $N$ to be a large enough integer. For the functions $f$ and $g$ introduced in the previous sections we will often use the relation
\begin{equation}\label{x99}
\|g\|_2\sim \|f\|_2\sim \sqrt {\log N}
\end{equation}
that easily follows from \e{x45} and \e{x17}.
\begin{lemma}\label{L5}
 Let $f\in L^2(\ZR)$ and $\supp f\subset B(0,\delta)$. Then for any direction $\theta$ and number $A\ge 2\delta$ it holds the inequality
	\begin{equation}\label{x51}
	\|T_{\Gamma_\theta}(f)\cdot \ZI_{B(A,\infty)}\|_2\lesssim \sqrt{ \frac{\delta}{A}}\cdot \|f\|_2.
	\end{equation}
\end{lemma}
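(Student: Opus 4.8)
The plan is to drop the harmless part of $T_{\Gamma_\theta}$ and reduce to an estimate for the directional Hilbert transform on an annulus far from the support of $f$. Since $A\ge 2\delta$, the set $B(A,\infty)$ is disjoint from $\supp f\subset B(0,\delta)$, so on $B(A,\infty)$ the term $f$ in the identity \e{2-27} vanishes and $T_{\Gamma_\theta}(f)\cdot\ZI_{B(A,\infty)}=-\tfrac{i}{2}\,H_\theta f\cdot\ZI_{B(A,\infty)}$. Hence it suffices to prove $\|H_\theta f\cdot\ZI_{B(A,\infty)}\|_2\lesssim\sqrt{\delta/A}\,\|f\|_2$.

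Next I would establish a pointwise bound. Write $\u=(\cos\theta,\sin\theta)$ and fix $\x$ with $\|\x\|\ge A$. The function $t\mapsto f(\x-t\u)$ is supported where $\x-t\u\in B(0,\delta)$, which forces $|t|\ge\|\x\|-\delta\ge\|\x\|/2$ (using $\|\x\|\ge A\ge2\delta$); in particular the integral defining $H_\theta f(\x)$ is absolutely convergent there and
\begin{equation*}
|H_\theta f(\x)|\le\frac{1}{\pi}\int_{|t|\ge\|\x\|/2}\frac{|f(\x-t\u)|}{|t|}\,dt\le\frac{2}{\pi\|\x\|}\int_\ZR|f(\x-t\u)|\,dt=:\frac{G_\theta(\x)}{\|\x\|}.
\end{equation*}
The quantity $G_\theta(\x)$ is the integral of $|f|$ along the line through $\x$ in direction $\u$, so in orthonormal coordinates $\x=s\u+y\u^\perp$ it depends only on $y$: $G_\theta(\x)=G(y):=\int_\ZR|f(s\u+y\u^\perp)|\,ds$, and $G(y)=0$ for $|y|\ge\delta$ because $\supp f\subset B(0,\delta)$.

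The final step is a two-step integration. For $|y|<\delta\le A/2$ one has $\int_{\{s:\,s^2+y^2\ge A^2\}}\frac{ds}{s^2+y^2}\le\int_{|s|\ge\sqrt{A^2-\delta^2}}\frac{ds}{s^2}=\frac{2}{\sqrt{A^2-\delta^2}}\lesssim\frac1A$, hence
\begin{equation*}
\|H_\theta f\cdot\ZI_{B(A,\infty)}\|_2^2\lesssim\int_{|y|<\delta}G(y)^2\left(\int_{s^2+y^2\ge A^2}\frac{ds}{s^2+y^2}\right)dy\lesssim\frac1A\int_{|y|<\delta}G(y)^2\,dy .
\end{equation*}
For fixed $y$ the integrand in $G(y)$ lives on the interval $s^2\le\delta^2-y^2$ of length $\le2\delta$, so Cauchy--Schwarz gives $G(y)^2\le2\delta\int_\ZR|f(s\u+y\u^\perp)|^2\,ds$, and integrating in $y$ yields $\int_{|y|<\delta}G(y)^2\,dy\le2\delta\|f\|_2^2$. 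Combining the last two displays with the first reduction produces $\|T_{\Gamma_\theta}(f)\cdot\ZI_{B(A,\infty)}\|_2\lesssim\sqrt{\delta/A}\,\|f\|_2$.

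I do not expect a genuine obstacle here: everything is rotation invariant, so the constants are plainly absolute and independent of $\theta$. The only points requiring care are the elementary geometry that yields $|t|\gtrsim\|\x\|$ on the support of the inner integral (this is precisely where the hypothesis $A\ge2\delta$ enters) and the Cauchy--Schwarz estimate that trades one factor of $\delta$ for passing from an $L^1$ to an $L^2$ average along lines, which is exactly what generates the gain $\sqrt{\delta/A}$.
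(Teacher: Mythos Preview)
Your proof is correct and follows essentially the same route as the paper: reduce $T_{\Gamma_\theta}$ to $H_\theta$ via \e{2-27} and the vanishing of $f$ on $B(A,\infty)$, obtain a pointwise bound of the form (line integral of $|f|$)$/$(distance), then integrate and apply Cauchy--Schwarz along lines to pick up the factor $\sqrt{\delta/A}$. The only cosmetic differences are that the paper fixes $\theta=0$ and bounds by $1/|x_1|$ rather than $1/\|\x\|$, whereas you work in rotated coordinates $(s,y)$; the computations are otherwise identical.
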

\begin{proof}
	In light of \e{2-27} and the conditions of the lemma we have
\begin{equation*}
\|T_{\Gamma_\theta}(f)\cdot \ZI_{B(A,\infty)}\|_2=\frac{1}{2}\cdot \|H_{\theta}(f)\cdot \ZI_{B(A,\infty)}\|_2,
\end{equation*}	
 so it is enough to prove \e{x51} for the $H_\theta$ instead of the operator $T_{\Gamma_{\theta}}$. Without loss of generality we can suppose that $\theta=0$. So we have 
	\begin{equation}
	H_\theta f(\x)=\int_{\ZR}\frac{f(x_1-t,x_2)}{t}dt=\int_{-\delta}^\delta\frac{f(t,x_2)}{t-x_1}dt.
	\end{equation}
Observe that 
\begin{align}
&H_\theta f(\x)=0,\quad |x_2|>\delta,\\
&|H_\theta f(\x)|\lesssim \frac{1}{|x_1|}\cdot \int_{-\delta}^\delta|f(t,x_2)|dt,\quad |x_2|\le \delta,\quad |x_1|>1,6\delta.
\end{align}
Thus, using $A\ge 2\delta$ and a simple geometric argument, we obtain
\begin{align}
\|H_\theta(f)\cdot \ZI_{B(A,\infty)}\|_2^2&\lesssim 2\int_{-\delta}^\delta\int_{0,8A}^\infty \frac{1}{|x_1|^2}\cdot \left(\int_{-\delta}^\delta|f(t,x_2)|dt\right)^2dx_1dx_2\\
&\lesssim\frac{1}{A}\int_{-\delta}^\delta \left(\int_{-\delta}^\delta|f(t,x_2)|dt\right)^2dx_2\\
&\lesssim \frac{\delta}{A}\|f\|_2^2
\end{align}
and so \e{x51}.
\end{proof}
Denote
\begin{align*}
&S(\alpha,\beta)=\Gamma_\beta\setminus \Gamma_\alpha\\
&\qquad \quad \,=\{\textbf{x}\in \ZR^2:\,x_1\cos \beta +x_2\sin  \beta \ge 0,\, x_1\cos   \alpha +x_2\sin  \alpha<0\}
\end{align*}
that is a sectorial region. 
\begin{lemma}\label{L6}
	Let, $0<\delta<1/16$, $f\in L^2(\ZR)$ and $\supp f\subset B(0,\delta)$. Then for any directions $\alpha,\beta$ it holds the inequality
	\begin{equation}
	\|T_{S(\alpha, \beta)}(f)\cdot \ZI_{B(1/2,\infty)}\|_2\lesssim\sqrt[4]{\delta}\cdot \|f\|_2
	\end{equation}
\end{lemma}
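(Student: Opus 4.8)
The plan is to reduce the sectorial multiplier $T_{S(\alpha,\beta)}$ to a composition of two half-plane multipliers and then apply \lem{L5} to each factor, with a well-chosen intermediate radius. Since $\ZR^2\setminus\Gamma_\alpha$ and $\Gamma_{\alpha+\pi}$ differ only by a line, hence by a set of planar measure zero, we have $\ZI_{S(\alpha,\beta)}=\ZI_{\Gamma_\beta}\cdot\ZI_{\Gamma_{\alpha+\pi}}$ almost everywhere; as Fourier multiplier operators compose by multiplying their symbols, this gives $T_{S(\alpha,\beta)}=T_{\Gamma_\beta}\circ T_{\Gamma_{\alpha+\pi}}$. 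Moreover each $T_{\Gamma_\theta}$ has symbol of sup-norm $1$, so $\|T_{\Gamma_\theta}\|_{L^2\to L^2}\le1$ (equivalently, combine \e{2-27} with $\|H_\theta\|_{2\to2}=1$).

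Next I would set $g=T_{\Gamma_{\alpha+\pi}}(f)$ and fix an auxiliary radius $A$ with $2\delta\le A\le1/4$, writing $g=g\cdot\ZI_{B(0,A)}+g\cdot\ZI_{B(A,\infty)}$. For the far part, \lem{L5} applied to $f$ (supported in $B(0,\delta)$) with direction $\alpha+\pi$ and radius $A$ gives $\|g\cdot\ZI_{B(A,\infty)}\|_2\lesssim\sqrt{\delta/A}\,\|f\|_2$, and then boundedness of $T_{\Gamma_\beta}$ yields
\begin{equation*}
\big\|T_{\Gamma_\beta}\big(g\cdot\ZI_{B(A,\infty)}\big)\big\|_2\lesssim\sqrt{\delta/A}\,\|f\|_2.
\end{equation*}
For the near part, $g\cdot\ZI_{B(0,A)}$ is supported in $B(0,A)$ with $1/2\ge2A$, so \lem{L5} applied to it with direction $\beta$ and radius $1/2$ gives
\begin{equation*}
\big\|T_{\Gamma_\beta}\big(g\cdot\ZI_{B(0,A)}\big)\cdot\ZI_{B(1/2,\infty)}\big\|_2\lesssim\sqrt{A}\,\|g\cdot\ZI_{B(0,A)}\|_2\le\sqrt{A}\,\|g\|_2\le\sqrt{A}\,\|f\|_2,
\end{equation*}
using $\|g\|_2=\|T_{\Gamma_{\alpha+\pi}}(f)\|_2\le\|f\|_2$.

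Adding the two contributions, since $T_{S(\alpha,\beta)}(f)=T_{\Gamma_\beta}(g\cdot\ZI_{B(0,A)})+T_{\Gamma_\beta}(g\cdot\ZI_{B(A,\infty)})$, we get
\begin{equation*}
\big\|T_{S(\alpha,\beta)}(f)\cdot\ZI_{B(1/2,\infty)}\big\|_2\lesssim\big(\sqrt{A}+\sqrt{\delta/A}\big)\|f\|_2,
\end{equation*}
and the choice $A=\sqrt\delta$ balances the two terms, each becoming $\sqrt[4]{\delta}$; the constraints $2\delta\le A$ and $A\le1/4$ then amount to $\sqrt\delta\le1/2$ and $\sqrt\delta\le1/4$, both guaranteed by $\delta<1/16$. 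This yields the stated estimate. I do not expect a genuine obstacle here: the argument is a two-stage funneling of the $L^2$ mass of $f$ from $B(0,\delta)$ out past radius $1/2$, and the exponent $1/4$ is exactly the square-root gain of \lem{L5} exploited twice; the only points needing a line of care are the measure-zero identification $\ZR^2\setminus\Gamma_\alpha\simeq\Gamma_{\alpha+\pi}$ and the legitimacy of composing the two bounded multiplier operators on $L^2$.
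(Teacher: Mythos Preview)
Your proof is correct and follows essentially the same route as the paper: factor $T_{S(\alpha,\beta)}=T_{\Gamma_\beta}\circ T_{\Gamma_{\alpha+\pi}}$, split the intermediate function at radius $\sqrt\delta$, and apply \lem{L5} twice. The only cosmetic difference is that you leave the intermediate radius $A$ free and optimize at the end, whereas the paper sets $A=\sqrt\delta$ from the outset.
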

\begin{proof}
	Observe that
	\begin{equation*}
	T_{S(\alpha, \beta)}=T_{\Gamma_\beta}\circ T_{\Gamma_{\pi+\alpha}}.
	\end{equation*}
	Consider the functions
	\begin{equation*}
	f_1=T_{\Gamma_{\pi+\alpha}}(f)\cdot \ZI_{B(0,\sqrt{\delta})},\quad f_2=T_{\Gamma_{\pi+\alpha}}(f)\cdot \ZI_{B(\sqrt{\delta},\infty)}.
	\end{equation*}
Applying \lem{L5} for $A=\sqrt{\delta}$, we obtain
	\begin{equation}\label{x56}
\|f_2\|_2=\|T_{\Gamma_{\pi+\alpha}}(f)\cdot \ZI_{B(\sqrt{\delta},\infty)}\|_2\lesssim \sqrt{ \frac{\delta}{\sqrt{\delta}}}\cdot \|f\|_2=\sqrt[4]{\delta}\cdot \|f\|_2
\end{equation}
and so
\begin{equation*}
\|T_{\Gamma_\beta}(f_2)\|_2\le \|f_2\|_2\lesssim  \sqrt[4]{\delta}\cdot \|f\|_2.
\end{equation*}
Once again apply \lem{L5} for $A=1/2$, we get
\begin{equation}\label{x57}
\|T_{\Gamma_\beta}(f_1)\cdot \ZI_{B(1/2,\infty)})\|_2\lesssim \sqrt[4]{\delta}\|f_1\|_2\le   \sqrt[4]{\delta}\|f\|_2.
\end{equation}
Finally, combining \e{x56} and \e{x57}, we obtain
\begin{align*}
	\|T_{S(\alpha, \beta)}(f)\cdot \ZI_{B(1/2,\infty)}\|_2&=\|T_{\Gamma_\beta}(T_{\Gamma(\pi+\alpha)}(f))\cdot \ZI_{B(1/2,\infty)}\|_2\\
	&\le \|T_{\Gamma_\beta}(f_1)\cdot \ZI_{B(1/2,\infty)}\|_2+\|T_{\Gamma_\beta}(f_2)\|_2\\
	&\lesssim  \sqrt[4]{\delta}\|f\|_2.
\end{align*}
\end{proof}

Denote 
\begin{align}
&S_k^+=S(\theta_k,\theta_{k-1}),\quad 
S_k^-=S(\theta_{k-1},\theta_{k}),\\
&S_k=S_k^+\cup S_k^-.
\end{align}
and consider the functions
\begin{equation}\label{x52}
g_k=T_{S_k^+}(g)-T_{S_k^-}(g),\quad k=1,2,\ldots,N,
\end{equation}
where $g$ is \e{x18}.
\begin{lemma}
	The sequence of functions \e{x52} satisfies the bound
	\begin{equation}\label{x66}
\left\|\max_{1\le m\le N}\left|\sum_{k=1}^mg_k\right|\right\|_2\gtrsim \log N\cdot \left\|\sum_{k=1}^Ng_k\right\|_2=\log N\cdot \|g\|_2
	\end{equation}
\end{lemma}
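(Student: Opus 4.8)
The plan is to identify the Fourier multiplier of each $g_k$ and to notice that it telescopes. Since $\widehat{T_Df}=\ZI_D\hat f$ and $S(\alpha,\beta)=\Gamma_\beta\setminus\Gamma_\alpha$, for any two directions $\alpha,\beta$ one has the pointwise identity $\ZI_{S(\alpha,\beta)}=\ZI_{\Gamma_\beta}\big(1-\ZI_{\Gamma_\alpha}\big)$. Applying this to $S_k^+=S(\theta_k,\theta_{k-1})$ and $S_k^-=S(\theta_{k-1},\theta_k)$ and expanding the products, the two bilinear terms cancel and we get
\begin{equation*}
\widehat{g_k}=\big(\ZI_{S_k^+}-\ZI_{S_k^-}\big)\hat g=\big(\ZI_{\Gamma_{\theta_{k-1}}}-\ZI_{\Gamma_{\theta_k}}\big)\hat g,\qquad k=1,\dots,N.
\end{equation*}
Summing in $k$, the right-hand side telescopes (using $\theta_0=0$), so by \e{2-27}
\begin{equation*}
\sum_{k=1}^m g_k=T_{\Gamma_{0}}(g)-T_{\Gamma_{\theta_m}}(g)=\frac{i}{2}\big(H_{\theta_m}g-H_0 g\big),\qquad m=1,\dots,N.
\end{equation*}

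For the equality in \e{x66} I would take $m=N$. As $\theta_N=\pi$, the displayed multiplier is $\ZI_{\Gamma_0}-\ZI_{\Gamma_\pi}=\sign x_1$ a.e., i.e. $\sum_{k=1}^N g_k=-iH_0 g$; since the Hilbert transform is an isometry of $L^2$ this gives $\big\|\sum_{k=1}^N g_k\big\|_2=\|g\|_2$.

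For the lower bound I would use that, as $m$ runs over $1,\dots,N$, the direction $\theta_m$ runs over the whole family $\Theta$, so that pointwise
\begin{equation*}
\max_{1\le m\le N}\left|\sum_{k=1}^m g_k\right|=\frac12\max_{\theta\in\Theta}\big|H_\theta g-H_0 g\big|\ge \frac12\big(H_\Theta^*g-|H_0 g|\big).
\end{equation*}
Taking $L^2$-norms (via the elementary bound $\|u\|_2\le\|(u-v)_+\|_2+\|v\|_2$ for $u,v\ge0$) and using $\|H_0 g\|_2=\|g\|_2$, the claim reduces to a lower bound for $\|H_\Theta^* g\|_2$. That is supplied by \lem{L1}: from \e{2-27} we have $|T_{\Gamma_\theta}g|\le\frac12(|g|+|H_\theta g|)$, hence $\|T^*g\|_2\le\frac12(\|g\|_2+\|H_\Theta^* g\|_2)$, and combining with \e{x7} gives $\|H_\Theta^* g\|_2\ge 2\|T^*g\|_2-\|g\|_2\gtrsim\log N\cdot\|g\|_2$. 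Altogether, for $N$ large enough,
\begin{equation*}
\left\|\max_{1\le m\le N}\left|\sum_{k=1}^m g_k\right|\right\|_2\ge\frac12\big(\|H_\Theta^* g\|_2-\|g\|_2\big)\gtrsim\log N\cdot\|g\|_2,
\end{equation*}
which is \e{x66}.

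All the computations are elementary; the only steps requiring a little care are the algebraic cancellation producing the telescoping multiplier $\ZI_{\Gamma_{\theta_{k-1}}}-\ZI_{\Gamma_{\theta_k}}$, and the observation that the maximum over $1\le m\le N$ exactly reconstitutes the discrete family $\Theta$ over which the logarithmic lower bound was proved in the previous sections. The genuinely hard input — Demeter's example \e{x9} and its smoothing in \lem{L1} — is already in place, so the present lemma is essentially a repackaging of \e{x7} in the language of partial sums.
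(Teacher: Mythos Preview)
Your proof is correct and follows essentially the same route as the paper: both identify the telescoping identity $\sum_{k=1}^m g_k=T_{\Gamma_{\theta_0}}(g)-T_{\Gamma_{\theta_m}}(g)$ (the paper states it with the opposite sign, which is immaterial under absolute values) and then feed in the logarithmic lower bound \e{x7}. The only difference is cosmetic---the paper bounds $\max_m|\sum g_k|$ directly by $T^*g-|T_{\Gamma_0}g|$ and invokes \e{x7}, whereas you first convert to $H_\Theta^*$ via \e{2-27} and then convert back; this detour is unnecessary but harmless.
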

\begin{proof}
One can check that
	\begin{equation}\label{x37}
	T_{\Gamma_m}(g)=T_{\Gamma_0}(g)+\sum_{k=1}^mg_k,\quad  \left\|\sum_{k=1}^Ng_k\right\|_2=\|g\|_2.
	\end{equation}
So from \e{x7} we obtain 
\begin{align*}
\left\|\max_{1\le m\le N}\left|\sum_{k=1}^mg_k\right|\right\|_2&\ge \left\|\max_{1\le m\le N}\left|T_{\Gamma_0}(g)+\sum_{k=1}^mg_k\right|\right\|_2-\|T_{\Gamma_0}(g)\|_2\\
&= \left\|\max_{1\le m\le N}|T_{\Gamma_m}(g)|\right\|_2-\|T_{\Gamma_0}(g)\|_2\\
&\ge \left\|T^*(g)\right\|_2-\|g\|_2\\
&\gtrsim \log N\cdot \|g\|_2.
\end{align*}
\end{proof}

Now denote
\begin{align*}
&D_k^+=B(5N^4,N^{15})\cap S(\theta_k-1/N^4,\theta_{k-1}+1/N^4),\\
&D_k^-=B(5N^4,N^{15})\cap S(\theta_{k-1}+1/N^4, \theta_k-1/N^4),\\
&D_k=D_k^+\cup D_k^-.
\end{align*}
And consider the functions 
\begin{equation}\label{x58}
f_k=T_{D_k^+}(g)-T_{D_k^-}(g),\quad k=1,2,\ldots,N.
\end{equation}
\begin{lemma}
	We have the inequality
	\begin{equation}\label{x65}
	\|f_k- g_k\|_2\lesssim \|g\|_2/N^2,\quad k=1,2,\ldots,N. 
	\end{equation}
\end{lemma}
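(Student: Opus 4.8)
The plan is to note that $D_k^{\pm}$ differs from the sector $S_k^{\pm}$ only by a small angular narrowing and a radial truncation, and that $\hat g$ is negligible on each removed piece. First I would record the elementary geometric fact that, for $N$ large (so that $\theta_k-\theta_{k-1}=\pi/N>2/N^4$), the narrowed sectors satisfy $S(\theta_k-1/N^4,\theta_{k-1}+1/N^4)\subseteq S(\theta_k,\theta_{k-1})=S_k^{+}$ and $S(\theta_{k-1}+1/N^4,\theta_k-1/N^4)\subseteq S_k^{-}$; hence $D_k^{\pm}\subseteq S_k^{\pm}$. Since $T_D$ is the Fourier multiplier with symbol $\ZI_D$, it follows from \e{x52} and \e{x58} that $f_k-g_k=T_{S_k^{-}\setminus D_k^{-}}(g)-T_{S_k^{+}\setminus D_k^{+}}(g)$, so it is enough to prove $\|T_{S_k^{\pm}\setminus D_k^{\pm}}(g)\|_2\lesssim \|g\|_2/N^2$.

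Next I would cover $S_k^{+}\setminus D_k^{+}$ by three regions: (a) the two angular slivers that make up $S_k^{+}\setminus S(\theta_k-1/N^4,\theta_{k-1}+1/N^4)$, which together form a union of angular wedges of total angular measure $2/N^4$ (with no radial restriction); (b) the low-frequency cap $S_k^{+}\cap B(0,5N^4)$; (c) the high-frequency tail $S_k^{+}\cap B(N^{15},\infty)$. By Plancherel, $\|T_{S_k^{+}\setminus D_k^{+}}(g)\|_2^2=\int_{S_k^{+}\setminus D_k^{+}}|\hat g|^2$ is bounded by the sum of the integrals of $|\hat g|^2$ over (a), (b) and (c).

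Then I would estimate the three contributions. Region (c) contributes $0$, since $\supp\hat g\subset B(0,N^{15})$ by \e{x6}. For region (a) I would use that $g$ is spherical, hence $\hat g$ and $|\hat g|^2$ are radial; passing to polar coordinates and using Plancherel, the integral of $|\hat g|^2$ over any angular region of angular measure $\omega$ equals $\tfrac{\omega}{2\pi}\|g\|_2^2$, so (a) contributes at most $\tfrac{2/N^4}{2\pi}\|g\|_2^2\lesssim \|g\|_2^2/N^4$. For region (b) I would invoke \e{x31}, $\hat g(\x)=\hat f(\x)\hat K(\x/N^{15})$, together with $\|\hat K\|_\infty\le\|K\|_1=1$ (by \e{x28} and the positivity of $K$) and $\|\hat f\|_\infty\le\|f\|_1\sim N^{-8}$ (by \e{x45}), to get $\|\hat g\|_\infty\lesssim N^{-8}$, so that (b) contributes at most $|B(0,5N^4)|\,\|\hat g\|_\infty^2\lesssim N^{8}N^{-16}=N^{-8}$. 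Adding the three bounds and using $\|g\|_2\sim\sqrt{\log N}\gtrsim 1$ from \e{x99} (so that $N^{-8}\lesssim \|g\|_2^2/N^4$), we get $\|T_{S_k^{+}\setminus D_k^{+}}(g)\|_2^2\lesssim \|g\|_2^2/N^4$; the identical argument handles $S_k^{-}\setminus D_k^{-}$, and \e{x65} follows.

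The only step that needs genuine care is the plane geometry of the first paragraph — that the narrowed sectors lie inside $S_k^{\pm}$ and that $S_k^{+}\setminus D_k^{+}$ is precisely two angular slivers of width $1/N^4$ together with the radial truncations of the narrowed sector. Everything afterwards is a short computation with Plancherel; note in particular that in this approach the support hypotheses of \lem{L6} are not used, since the radiality of $\hat g$ and the frequency localization \e{x6} do all the work.
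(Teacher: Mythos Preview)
Your proof is correct and matches the paper's approach essentially line for line: the paper also records $\supp(\hat f_k-\hat g_k)\subset B(0,5N^4)\cup\bigcup_{j=1}^4 U_j$ (the $U_j$ being the four angular slivers of width $1/N^4$), bounds the sliver contribution via the radial identity $\|\hat g\cdot\ZI_{S(\alpha,\beta)}\|_2^2=\tfrac{|\alpha-\beta|}{2\pi}\|g\|_2^2$, and handles the low-frequency cap using $\|\hat g\|_\infty\lesssim N^{-8}$ together with $\|g\|_2\sim\sqrt{\log N}$. The only cosmetic difference is that the paper treats $S_k^{+}$ and $S_k^{-}$ together rather than separately.
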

\begin{proof}
	First observe that, since $g$ and so $\hat g$ are a spherical functions, we have
	\begin{equation}\label{x59}
	\|T_{S(\alpha,\beta)}(g)\|_2^2=\|\hat g\cdot \ZI_{S(\alpha,\beta)}\|_2^2=\frac{|\alpha-\beta|}{2\pi }\cdot \|\hat g\|_2^2=\frac{|\alpha-\beta|}{2\pi }\cdot \|g\|_2^2.
	\end{equation}
	In view of \e{x6},  \e{x52} and \e{x58} it follows that 
	\begin{equation}
	\supp(\hat f_k-\hat g_k)\subset  B(0,5N^4)\cup \left(\cup_{j=1}^4U_j\right),
	\end{equation}
	where
	\begin{align*}
	&U_1=S(\theta_j-N^{-4},\theta_j),\, U_2=S(\theta_{j-1}+N^{-4}, \theta_{j-1}),\\ 
	&U_3=S(\theta_j,\theta_j-N^{-4}),\, U_4=S(\theta_{j-1}, \theta_{j-1}+N^{-4}).
	\end{align*}
	Besides, according to \e{x31} we have 
	\begin{equation}\label{x67}
	\|\hat g\|_\infty\le \|\hat f\|_\infty\cdot \|\hat K\|_\infty\lesssim \|\hat f\|_\infty \le \|f\|_1\lesssim N^{-8}.
	\end{equation}
	Thus, using \e{x59} and \e{x99}, we get
	\begin{align*}
	\|f_k- g_k\|_2&=	\|\hat f_k- \hat g_k\|_2\\
	&\le \|\hat g\cdot \ZI_{B(0,5N^4)}\|_2+\sum_{k=1}^4\|\hat g\cdot \ZI_{U_k}(g)\|_2\\
	&\lesssim \|\hat g\|_\infty \cdot N^4+N^{-2}\|g\|_2\\
	&\lesssim N^{-4} +N^{-2}\|g\|_2\\
	&\lesssim N^{-2}\|g\|_2.
	\end{align*}
\end{proof}
\begin{lemma} 
	It holds the inequality
	\begin{equation}\label{x41}
	\|f_k\cdot \ZI_{B(1/2,\infty)}\|_2\lesssim \frac{\|g\|_2}{N^2}.
	\end{equation}
\end{lemma}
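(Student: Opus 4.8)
\emph{Proof proposal.} The plan is to split each Fourier multiplier $T_{D_k^\pm}$ into three pieces — a full sector, a low‑frequency cap, and a high‑frequency tail — and to bound each piece separately. Set $\widetilde S_k^+=S(\theta_k-N^{-4},\theta_{k-1}+N^{-4})$ and $\widetilde S_k^-=S(\theta_{k-1}+N^{-4},\theta_k-N^{-4})$, so that $D_k^\pm=\widetilde S_k^\pm\cap B(5N^4,N^{15})$ by definition. Since $\ZR^2\setminus B(5N^4,N^{15})=B(0,5N^4)\cup B(N^{15},\infty)$, a disjoint union, we get the multiplier identity
\begin{equation*}
T_{D_k^\pm}(g)=T_{\widetilde S_k^\pm}(g)-T_{\widetilde S_k^\pm\cap B(0,5N^4)}(g)-T_{\widetilde S_k^\pm\cap B(N^{15},\infty)}(g).
\end{equation*}
Since $f_k=T_{D_k^+}(g)-T_{D_k^-}(g)$ by \e{x58}, it suffices to bound $\|\,\cdot\,\ZI_{B(1/2,\infty)}\|_2$ of each of the three terms on the right by $\|g\|_2/N^2$.

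The high‑frequency tail is identically zero, because $\supp\hat g\subset B(0,N^{15})$ by \e{x6}, hence $T_{\widetilde S_k^\pm\cap B(N^{15},\infty)}(g)=0$. The low‑frequency cap is handled in full $L^2$‑norm using the uniform smallness of $\hat g$: by \e{x67} and $\|T_D\|_{2\to2}\le1$,
\begin{equation*}
\|T_{\widetilde S_k^\pm\cap B(0,5N^4)}(g)\|_2\le\|\hat g\cdot\ZI_{B(0,5N^4)}\|_2\le\|\hat g\|_\infty\,|B(0,5N^4)|^{1/2}\lesssim N^{-8}\cdot N^4=N^{-4}\lesssim\|g\|_2/N^2,
\end{equation*}
the last step by \e{x99}. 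For the full sector I would replace $g$ by the compactly supported function $f$ of \e{x8}: since $\supp f\subset B(0,N^{-8})$ with $N^{-8}<1/16$, applying \lem{L6} with $\delta=N^{-8}$ gives
\begin{equation*}
\|T_{\widetilde S_k^\pm}(f)\cdot\ZI_{B(1/2,\infty)}\|_2\lesssim\sqrt[4]{N^{-8}}\,\|f\|_2=N^{-2}\|f\|_2\sim\|g\|_2/N^2,
\end{equation*}
using \e{x45} and \e{x99}; while $\|T_{\widetilde S_k^\pm}(g-f)\cdot\ZI_{B(1/2,\infty)}\|_2\le\|T_{\widetilde S_k^\pm}(g-f)\|_2\le\|g-f\|_2\lesssim N^{-2}\lesssim\|g\|_2/N^2$ by \e{x17} and \e{x99}. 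Adding these and using the triangle inequality for $f_k$ yields \e{x41}.

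I do not expect a genuine analytic obstacle here: the substantive work — that a sectorial Fourier multiplier of a function concentrated in a tiny ball around the origin leaves little mass outside $B(1/2,\infty)$ — has already been done in \lem{L6}. The only point requiring care is the matching of scales: the outer radius $N^{15}$ of the annulus is exactly the Fourier support radius of $g$, so the high‑frequency tail vanishes, and the inner radius $5N^4$ is large enough against $\|\hat g\|_\infty\sim N^{-8}$ that the low‑frequency cap is $O(N^{-4})$, comfortably below the target $\|g\|_2/N^2\sim N^{-2}\sqrt{\log N}$. Both are built into the choices of $g$ and of the sets $D_k^\pm$.
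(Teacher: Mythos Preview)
Your argument is correct and follows essentially the same strategy as the paper: both kill the high-frequency tail using $\supp\hat g\subset B(0,N^{15})$, control the low-frequency cap via $\|\hat g\|_\infty\lesssim N^{-8}$, and reduce the sector piece to an application of \lem{L6} with $\delta=N^{-8}$. The only organizational difference is that you decompose the multiplier region and pass from $g$ to $f$ directly (using \e{x17}), whereas the paper decomposes $g$ in frequency into $g_1+g_2$ and then spatially truncates $g_1$ to obtain a function $U$ supported in $B(0,N^{-8})$; your route is slightly shorter but the substance is the same.
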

\begin{proof}
	Letting
	\begin{equation}
	g_1=T_{B(5N^4,\infty)}(g),\quad g_2=T_{B(0,5N^4)}(g).
	\end{equation}
	We write 
	\begin{equation*}
	g=g_2+g_1=g_2+g_1\cdot \ZI_{B(0,N^{-8})}+g_1\cdot \ZI_{B(N^{-8},\infty)}=g_2+U+V.
	\end{equation*}
	Using \e{x6} and the definitions of the domains $D_k^+$ and $D_k^-$, one can write
	\begin{align}
	T_{D_k^+}(g)-T_{D_k^-}(g)&=T_{D_k^+}(g_1)-T_{D_k^-}(g_1)=T_{G_k^+}(g_1)-T_{G_k^-}(g_1)\\
	&=T_{G_k^+}(U)-T_{G_k^-}(U)+T_{G_k^+}(V)-T_{G_k^-}(V),\label{x68}
	\end{align}
	where 
	\begin{align*}
	G_k^+=S(\theta_k-1/N^4,\theta_{k-1}+1/N^4),\quad G_k^-=S(\theta_{k-1}+1/N^4, \theta_k-1/N^4).
	\end{align*}
	By \e{x67} we have 
	\begin{equation}
	\|g_2\|_2=\|\hat g\cdot \ZI_{B(0,5N^4)}\|_2\lesssim \frac{1}{N^8}\|\ZI_{B(0,5N^4)}\|_2\lesssim \frac{1}{N^4}.
	\end{equation}
	Combination of $\supp f\subset B(0,N^{-8})$ with inequality \e{x17} implies 
	\begin{align}
		\|T_{G_k^+}(V)-T_{G_k^-}(V)\|_2&\le \|V\|_2=\|(f-g_1)\cdot \ZI_{B(N^{-8},\infty)}\|_2\\
		&\le \|f-g_1\|_2\lesssim \|f-g\|_2+\|g_2\|_2\lesssim N^{-2}. \label{x53}
	\end{align}
	Then, applying \lem{L6} with $\delta=N^{-8}$, and taking into account that $\supp U\subset B(0,N^{-8})$ we obtain
	\begin{equation}\label{x54}
	\|(T_{G_k^+}(U)-T_{G_k^-}(U))\cdot \ZI_{B(1/2,\infty)}\|_2\lesssim \frac{\|U\|_2}{N^2}\le \frac{\|g\|_2}{N^2}.
	\end{equation}
	From \e{x58}, \e{x68}, \e{x53}, \e{x54} and \e{x99} we obtain 
	\begin{align*}
		\|f_k\cdot \ZI_{B(1/2,\infty)}\|_2&= \|(T_{D_k^+}(g)-T_{D_k^-}(g))\cdot \ZI_{B(1/2,\infty)}\|_2\\
		&\le  \|(T_{G_k^+}(V)-T_{G_k^-}(V))\cdot \ZI_{B(1/2,\infty)}\|_2\\
		&\qquad +\|(T_{G_k^+}(U)-T_{G_k^-}(U))\cdot \ZI_{B(1/2,\infty)}\|_2\\
		&\lesssim \|g\|_2/N^2
	\end{align*}
	and so \e{x41}.
\end{proof}

\begin{lemma}\label{L14}
	There exist a sequence of functions $r_k\in L^2(\ZR^2)$, $k=1,2,\ldots,N$, such that
	\begin{align}
	&\supp (r_k)\subset (-1/2,1/2)\times (-1/2,1/2),\label{x42}\\
	&\|\hat r_k\cdot \ZI_{\ZR^2\setminus D_k}\|_{2}\lesssim \|g\|_2/N^2,\label{x43}\\
	&\left\|\max_{1\le m\le N}\left|\sum_{k=1}^mr_k\right|\right\|_2\gtrsim \log N\left\|\sum_{k=1}^Nr_k\right\|_2\sim \log N\cdot \|g\|_2.\label{x44}
	\end{align}
\end{lemma}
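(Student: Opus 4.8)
The plan is to produce $r_k$ by a spatial truncation of the function $f_k$ from \e{x58}. Fix a cutoff $\psi\in C^\infty(\ZR^2)$ with $0\le\psi\le 1$, $\psi\equiv 1$ on $B(0,1/4)$, and $\supp\psi$ a compact subset of $(-1/2,1/2)\times(-1/2,1/2)$, and set $r_k=\psi\cdot f_k$ for $k=1,\ldots,N$. Then \e{x42} holds trivially, and everything reduces to the single observation that $r_k$ is $L^2$-close to $f_k$: since $\psi-1$ vanishes on $B(0,1/4)$,
\begin{equation*}
\|r_k-f_k\|_2=\|f_k(\psi-1)\|_2\le\|f_k\cdot\ZI_{B(1/4,\infty)}\|_2\lesssim\frac{\|g\|_2}{N^2},
\end{equation*}
the last step being the estimate \e{x41} (the constant $1/2$ there is immaterial, and the argument through \lem{L6} with $\delta=N^{-8}$ applies verbatim with $1/4$ in its place).

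From this, \e{x43} is read off directly. By \e{x58} and the definition of $T_D$ we have $\hat f_k=\hat g\cdot(\ZI_{D_k^+}-\ZI_{D_k^-})$, which is supported in $D_k=D_k^+\cup D_k^-$; hence $\hat r_k\cdot\ZI_{\ZR^2\setminus D_k}=(\hat r_k-\hat f_k)\cdot\ZI_{\ZR^2\setminus D_k}$, and Plancherel gives
\begin{equation*}
\|\hat r_k\cdot\ZI_{\ZR^2\setminus D_k}\|_2\le\|\hat r_k-\hat f_k\|_2=\|r_k-f_k\|_2\lesssim\frac{\|g\|_2}{N^2}.
\end{equation*}
The one conceptual point is that multiplying by $\psi$ does spread the frequency support of $f_k$ beyond $D_k$, but \e{x43} only asks for this spread to be small in $L^2$, and it is controlled by $\|r_k-f_k\|_2$.

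For \e{x44} I would compare the partial sums of the $r_k$ with those of the $g_k$ from \e{x52}. Writing $\rho_k=r_k-g_k=(f_k-g_k)+(r_k-f_k)$, the bound \e{x65} together with the display above gives $\|\rho_k\|_2\lesssim\|g\|_2/N^2$, hence $\sum_{k=1}^N\|\rho_k\|_2\lesssim\|g\|_2/N$. For every $1\le m\le N$ one has the pointwise inequality $\bigl|\sum_{k=1}^m g_k\bigr|\le\bigl|\sum_{k=1}^m r_k\bigr|+\sum_{k=1}^N|\rho_k|$, so taking $L^2$ norms and invoking \e{x66},
\begin{align*}
\left\|\max_{1\le m\le N}\Bigl|\sum_{k=1}^m r_k\Bigr|\right\|_2
&\ge\left\|\max_{1\le m\le N}\Bigl|\sum_{k=1}^m g_k\Bigr|\right\|_2-\sum_{k=1}^N\|\rho_k\|_2\\
&\gtrsim\log N\cdot\|g\|_2-\frac{\|g\|_2}{N}\gtrsim\log N\cdot\|g\|_2,
\end{align*}
the last step holding for large $N$ since $\|g\|_2\sim\sqrt{\log N}$ by \e{x99}. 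Likewise, using $\bigl\|\sum_{k=1}^N g_k\bigr\|_2=\|g\|_2$ from \e{x37},
\begin{equation*}
\Bigl\|\sum_{k=1}^N r_k\Bigr\|_2=\Bigl\|\sum_{k=1}^N g_k+\sum_{k=1}^N\rho_k\Bigr\|_2=\|g\|_2+O\!\Bigl(\frac{\|g\|_2}{N}\Bigr)\sim\|g\|_2.
\end{equation*}
Combining the two displays yields \e{x44}.

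I expect no real obstacle here: all the analytic content — the logarithmic lower bound for the maximal partial sums and the smallness of every error term — has already been established in the preceding lemmas, and what remains is bookkeeping. The one place needing a line of care is checking that the error accumulated over $k=1,\dots,N$, namely $\sum_{k=1}^N\|\rho_k\|_2\lesssim N\cdot\|g\|_2/N^2=\|g\|_2/N$, is still negligible against $\log N\cdot\|g\|_2$ — which it is by \e{x99} — so that the passage from the merely frequency‑localized $f_k$ to the compactly supported $r_k$ costs essentially nothing.
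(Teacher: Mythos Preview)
Your proof is correct and follows essentially the same approach as the paper: the paper defines $r_k=f_k\cdot\ZI_{B(0,1/2)}$ rather than $r_k=\psi\cdot f_k$, but this is a cosmetic difference since only the $L^2$ smallness of $r_k-f_k$ (via \e{x41}) is used, and the rest of your argument --- Plancherel for \e{x43}, and comparison with $g_k$ through \e{x65} and \e{x66} for \e{x44} --- matches the paper's line for line. Your remark that \e{x41} holds with $1/4$ in place of $1/2$ is correct, as \lem{L6} only needs $\delta=N^{-8}$ small relative to the truncation radius.
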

\begin{proof}
Define 
\begin{equation*}
r_k(\x )=f_k(\x )\cdot \ZI_{B(0,1/2)}(\x ).
\end{equation*}
We will immediately have \e{x42}. From \e{x41} it follows that
\begin{equation*}
\|r_k-f_k\|_2\lesssim \|g\|_2/N^2,
\end{equation*}
and in light of \e{x65} we get $\|r_k-g_k\|_2\lesssim \|g\|_2/N^2$. Thus, taking into account \e{x66}, we get \e{x44}. Since
\begin{equation*}
r_k=f_k-f_k\cdot \ZI_{B(1/2,\infty)},
\end{equation*}
and $\supp \hat f_k\subset D_k$, by \e{x41} and \e{x99} we get
\begin{align}
\|\hat r_k\cdot \ZI_{\ZR^2\setminus D_k}\|_{2}&=\|\widehat{f_k\cdot \ZI}_{B(1/2,\infty)}\cdot \ZI_{\ZR^2\setminus D_k}\|_{2}\le \|\widehat{f_k\cdot \ZI}_{B(1/2,\infty)}\|_{2}\\
&= \|f_k\cdot \ZI_{B(1/2,\infty)}\|_{2}\lesssim \| f\|_2/N^2
\end{align}
and so \e{x43}.
\end{proof}
\section{Double trigonometric polynomials}

The following lemma is a version of \lem{L14} for double trigonometric sums.
\begin{proposition}
	There exist two dimensional non-overlapping trigonometric polynomials 
	\begin{equation}\label{x61}
	p_k(\textbf{x})=\sum_{\textbf{n}\in G_k} a_{\textbf{n}}e^{2\pi i\textbf{n}\cdot \textbf{x}},\quad k=1,2,\ldots, N,
	\end{equation}
	such that 
	\begin{align}
	&G_k\subset B(0,4N^{15})\cap \ZZ_+^2,\label{x24}\\
	&\left\|\max_{1\le m\le N}\left|\sum_{k=1}^mp_k\right|\right\|_2\gtrsim \log N\left\|\sum_{k=1}^Np_k\right\|_2.\label{x25}
	\end{align}
\end{proposition}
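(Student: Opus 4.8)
The plan is to \emph{discretize} the continuous construction of \lem{L14}. We have functions $r_k\in L^2(\ZR^2)$ supported in $Q=(-1/2,1/2)^2$, whose Fourier transforms essentially live on the disjoint sectorial annuli $D_k\subset B(0,4N^{15})$, and whose partial sums have a logarithmically large maximal function. The goal is to replace each $r_k$ by a genuine trigonometric polynomial $p_k$ with integer frequencies, supported in the corresponding integer set $G_k\subset D_k\cap\ZZ^2$, so that all the relevant $L^2$-norms are preserved up to the small error $\|g\|_2/N^2$, and the sets $G_k$ remain disjoint. The extra requirement $G_k\subset\ZZ_+^2$ (positive frequencies only) is achieved by a harmless translation of frequencies, i.e.\ multiplying each $r_k$ by a single fixed exponential $e^{2\pi i M(x_1+x_2)}$ with $M\sim N^{15}$; this shifts every $D_k$ into the positive quadrant without changing any modulus $|\cdot|$ or any $L^2$-norm, and preserves disjointness.

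The key tool is the transference between functions on $Q$ and their Fourier coefficients: for $\varphi\in L^2(\ZR^2)$ with $\supp\varphi\subset Q$, the periodization/sampling gives a trigonometric polynomial (more precisely, a Fourier series) whose coefficients are $\hat\varphi(\n)$, $\n\in\ZZ^2$, and Plancherel on $\ZT^2$ versus $\ZR^2$ relates the two $L^2$-norms. Concretely I would set
\begin{equation*}
p_k(\x)=\sum_{\n\in G_k}\hat r_k(\n)\,e^{2\pi i\n\cdot\x},\qquad G_k=D_k\cap\ZZ^2
\end{equation*}
(after the positivity shift). Since $\supp r_k\subset Q$, the function $r_k$ agrees on $Q$ with its $\ZZ^2$-periodization, which is exactly the Fourier series $\sum_{\n\in\ZZ^2}\hat r_k(\n)e^{2\pi i\n\cdot\x}$; hence on $Q$ we have $p_k=r_k-\sum_{\n\notin G_k}\hat r_k(\n)e^{2\pi i\n\cdot\x}$, and by \e{x43} together with $\supp\hat r_k$ being morally inside $B(0,4N^{15})$ the tail $\|p_k-r_k\|_{L^2(Q)}\lesssim \|g\|_2/N^2$. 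Then \e{x24} follows from $D_k\subset B(0,4N^{15})$ (and the shift), \e{x25} follows from \e{x44} by summing the $N$ error terms $\|p_k-r_k\|_2\lesssim\|g\|_2/N^2$ against the triangle inequality for $\max_{1\le m\le N}|\sum_{k=1}^m(\cdot)|$, exactly as in the proof of \lem{L14}, and the non-overlapping property of the $p_k$ is immediate from the disjointness of the $D_k$.

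The main obstacle is the bookkeeping around the Fourier support: $\hat r_k$ is not literally supported in $D_k$, only \emph{essentially} so (bound \e{x43}), and one must be careful that restricting the Fourier series of $r_k$ to integer points in $D_k$, rather than to all of $B(0,4N^{15})\cap\ZZ^2$, still costs only $\|g\|_2/N^2$ in $L^2$ — this is exactly what \e{x43} is designed to give, since $\|\,\sum_{\n\in(\ZR^2\setminus D_k)\cap\ZZ^2}\hat r_k(\n)e^{2\pi i\n\cdot\x}\|_{L^2(\ZT^2)}=\|\hat r_k\cdot\ZI_{\ZR^2\setminus D_k}\|_{\ell^2(\ZZ^2)}\le\|\hat r_k\cdot\ZI_{\ZR^2\setminus D_k}\|_{L^2(\ZR^2)}$ once one knows $\hat r_k$ is a sufficiently regular (continuous, rapidly decaying) function so that its $\ell^2$ mass over a region is controlled by its $L^2$ mass — which holds because $r_k$ is compactly supported, hence $\hat r_k$ is band-\emph{un}limited but smooth, and a standard sampling inequality applies. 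A secondary point is that $\hat r_k$ does have (rapidly decaying) tails outside $B(0,4N^{15})$ coming from the truncation $r_k=f_k\cdot\ZI_{B(0,1/2)}$; these are absorbed into the same $O(\|g\|_2/N^2)$ error. Once these estimates are in hand the proposition is just \lem{L14} read through Plancherel, so I expect the proof to be short.
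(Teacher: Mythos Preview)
Your approach is essentially the same strategy as the paper's, but there is a genuine gap at the step you yourself flag as ``the main obstacle''. You claim
\[
\bigl\|\hat r_k\cdot\ZI_{\ZR^2\setminus D_k}\bigr\|_{\ell^2(\ZZ^2)}
\le
\bigl\|\hat r_k\cdot\ZI_{\ZR^2\setminus D_k}\bigr\|_{L^2(\ZR^2)}
\]
by appealing to ``a standard sampling inequality''. No such inequality is available. The identity $\sum_{\n\in\ZZ^2}|\hat r_k(\n)|^2=\int_{\ZR^2}|\hat r_k|^2$ does hold (since $\supp r_k\subset Q$), but the \emph{localized} version over $\ZZ^2\setminus D_k$ versus $\ZR^2\setminus D_k$ can fail in either direction: the truncated function $\hat r_k\cdot\ZI_{\ZR^2\setminus D_k}$ is no longer the Fourier transform of a function supported in $Q$, so neither Shannon sampling nor Plancherel--P\'olya applies to it in the form you need. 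A one-line example: if $h$ is a bump in $(-1/2,1/2)$ modulated so that $\hat h$ peaks at $\xi=1/2$, then $|\hat h(0)|^2$ can be of order $1$ while $\int_{|\xi|<\varepsilon}|\hat h|^2$ is arbitrarily small.

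This is precisely the obstacle the paper's proof is designed to avoid. The paper introduces a modulation parameter $\textbf{u}\in\ZT^2$, writes $r_k(\textbf{u},\x)=e^{-2\pi i\textbf{u}\cdot\x}r_k(\x)$ so that its $\ZT^2$-Fourier coefficients are $\hat r_k(\n+\textbf{u})$, and then \emph{averages} over $\textbf{u}$:
\[
\int_{\ZT^2}\sum_{\n\notin U_k}|\hat r_k(\n+\textbf{u})|^2\,d\textbf{u}
=\sum_{\n\notin U_k}\int_{\Delta_\n}|\hat r_k|^2
\le \|\hat r_k\cdot\ZI_{\ZR^2\setminus D_k}\|_{2}^2,
\]
where $U_k=\{\n:\Delta_\n\cap D_k\neq\varnothing\}$ (so $\cup_{\n\notin U_k}\Delta_\n\subset\ZR^2\setminus D_k$). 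Summing in $k$ and applying pigeonhole produces a single $\textbf{u}_0$ for which all tails are small simultaneously. Your choice amounts to fixing $\textbf{u}=0$, which is just one point of this average and need not be good. You could try to salvage your route via the Plancherel--P\'olya inequality, which would give $\sum_{\n\notin G_k}|\hat r_k(\n)|^2\lesssim\int_{(\ZR^2\setminus D_k)+[-1,1]^2}|\hat r_k|^2$, but then you need \e{x43} for a shrunken $D_k$, which is a separate verification; the averaging argument is cleaner and is what the paper does. The remaining ingredients of your plan (shifting frequencies by $\sim N^{15}$ to land in $\ZZ_+^2$, and deducing \e{x25} from \e{x44} by summing $N$ errors of size $\|g\|_2/N^2$) are correct and match the paper.
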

\begin{proof}
Let $\textbf{u}$ be a fixed vector. In light of \e{x42} the function 
\begin{equation}\label{u16}
r_k(\textbf{u},\x)=e^{-2\pi i\textbf{u}\cdot \x}r_k(\x)
\end{equation}
depended on $\x$ can be periodically continued and considered as a function of $L^2(\ZT^2)$ with the Fourier representation
\begin{equation*}
r_k(\textbf{u},\x)=\sum_{n\in  \ZZ^2} \hat r_k(\n+\textbf{u})e^{2\pi i\n\cdot \x}.
\end{equation*}
 For any $\n=(n_1,n_2)\in \ZZ^2$ we denote $\Delta_{\n}=[n_1,n_1+1)\times [n_2,n_2+1)$ and let
\begin{equation}
U_k=\{\n\in \ZZ^2:\, \Delta_\n\cap D_k\neq\varnothing\}\subset B(0,2N^{15}).
\end{equation}
From the definition of $D_k$ it follows that
\begin{align}
D_k\subset \bigcup_{\n\in U_k}\Delta_\n\subset B(0,2N^{15}).\label{u15}
\end{align}
A simple geometric argument shows that $\dist (D_k,\ZR^2\setminus S_k)>2$ that implies
\begin{equation*}
\bigcup_{n\in U_k}\Delta_n\subset S_k=S_k^+\cup S_k^-,
\end{equation*}
so $U_k$ are pairwise disjoint. Consider the functions
\begin{align*}
&p_k(\textbf{u},\x)=\sum_{\n\in  U_k} \hat r_k(\n+\textbf{u})e^{2\pi i\n\cdot \x},\\
&q_k(\textbf{u},\x)=\sum_{\n\in \ZZ^2\setminus U_k} \hat r_k(\n+\textbf{u})e^{2\pi i\n\cdot \x}.
\end{align*}
For a fixed $\u$ the polynomials $p_k(\textbf{u},\x)$ are non-overlapping, since $U_k\subset S_k$. Clearly,
\begin{equation}\label{x60}
r_k(\textbf{u},\x)=p_k(\textbf{u},\x)+q_k(\textbf{u},\x)
\end{equation}
and by \e{x43} and \e{u15} we obtain
\begin{align*}
\int_{\ZT^2}\|q_k(\textbf{u},\cdot)\|_2^2d\textbf{u}&=\int_{\ZT^2}\sum_{n\in \ZZ^2\setminus U_k} |\hat r_k(\n+\textbf{u})|^2d\textbf{u} =\sum_{\n\in \ZZ^2\setminus U_k}\int_{\Delta_\n}|\hat r_k(t)|^2dt\\
&\le \|\hat r_k\cdot \ZI_{\ZR^2\setminus D_k}\|_{2}^2\lesssim \frac{\|g\|_2^2}{N^4}
\end{align*}
and so
\begin{equation*}
\sum_{k=1}^N\int_{\ZT^2}\|q_k(\textbf{u},\cdot)\|_2^2d\textbf{u}\lesssim \frac{\|g\|_2^2}{N^3}.
\end{equation*}
This inequality produces a $\textbf{u}=\textbf{u}_0$ such that
\begin{equation*}
\sum_{k=1}^N\|q_k(\textbf{u}_0,\cdot )\|_2^2\lesssim \frac{\|g\|_2^2}{N^3}.
\end{equation*}
and by H\"{o}lder inequality we get
\begin{equation}\label{x69}
\sum_{k=1}^N\|q_k(\textbf{u}_0,\cdot)\|_2\le \sqrt N\left(\sum_{k=1}^N\|q_k(\textbf{u}_0,\cdot )\|_2^2\right)^{1/2}\lesssim \frac{\|g\|_2}{N}.
\end{equation}
Finally, we can define polynomials
\begin{equation*}
p_k(\x)=e^{2\pi i\cdot 2(N^{15}x_1+ N^{15}x_2)}p_k(\textbf{u}_0,\x),\quad \x=(x_1,x_2),
\end{equation*} 
with the non-overlapping spectrums
\begin{equation*}
G_k=U_k+(2N^{15},2N^{15})\subset B(0,4N^{15})\cap \ZZ_+^2.
\end{equation*}
Combining \e{x44}, \e{u16}, \e{x60} and \e{x69}, we get
\begin{align}
\left\|\max_{1\le m\le N}\left|\sum_{k=1}^mp_k\right|\right\|_2&=\left\|\max_{1\le m\le N}\left|\sum_{k=1}^mp_k(\u_0,\cdot )\right|\right\|_2\\
&\ge \left\|\max_{1\le m\le N}\left|\sum_{k=1}^mr_k(\u_0,\cdot )\right|\right\|_2-\sum_{k=1}^N\|q_k(\textbf{u}_0,\cdot)\|_2\\
&\ge  \left\|\max_{1\le m\le N}\left|\sum_{k=1}^mr_k\right|\right\|_2-c_2\cdot \frac{\|g\|_2}{N}\\
&\ge c_1 \log N\left\|\sum_{k=1}^Nr_k\right\|_2-c_2\cdot \frac{\|g\|_2}{N}.\label{x70}
\end{align}
Likewise, using \e{x99}, one can show 
\begin{equation}\label{x71}
\left|\left\|\sum_{k=1}^Nr_k\right\|_2-\left\|\sum_{k=1}^Np_k\right\|_2\right|\lesssim \frac{\|g\|_2}{N}.
\end{equation}
From \e{x44}, \e{x70} and \e{x71} one can easily get \e{x25}.
\end{proof}

\section{Equivalency of discrete trigonometric systems}
Let $\{f_k:\,k\in A\}$ and $\{g_k:\, k\in B\}$ be families of measurable complex-valued functions defined on
measure spaces $(X,\mu)$ and $(Y,\nu)$ respectively. We say these sequences are equivalent if there is a one to one mapping $\sigma:A\to B$ such that the equality 
\begin{equation}
\mu\{f_{\alpha_j}\in B_j,\, j=1,2,\ldots,m\}=\nu\{g_{\sigma(\alpha_j)}\in B_j,\, j=1,2,\ldots,m\}
\end{equation}
holds for any choice of indexes $\alpha_j\in A$ and open balls $B_j\subset  \ZR^2$, $j=1,2,\ldots,m$. For an integer $l\ge 1$ we denote $\ZN_l=\{1,2,\ldots ,l\}$.
The discrete trigonometric system of order $l$ on $[0,1)$ is defined as
\begin{equation*}
T^{(l)}=\left\{t_n^{(l)}(x)=\sum_{k=1}^l\exp\left(2\pi i \frac{nk}{l}\right)\cdot \ZI_{\delta_k^{(l)}}(x),\quad n\in \ZN_l\right\},
\end{equation*}
where $\delta_k^{(l)}=[(k-1)/l,k/l)$. The tensor product of two one dimensional systems of order $p$ and $q$ is the collection of functions
\begin{align*}
T^{(p)}&\times T^{(q)}\\
&=\left\{(t_{n_1}^{(p)}\times t_{n_2}^{(q)})(\x)=t_{n_1}^{(p)}(x_1)\cdot t_{n_2}^{(q)}(x_2),\,\n\in \ZN_p\times \ZN_q \right\}.
\end{align*}
Noice that
\begin{align}
(t_{n_1}^{(p)}\times t_{n_2}^{(q)})(\x)=\sum_{u_1=1}^p\sum_{u_2=1}^q\exp2\pi i\left( \frac{n_1u_1}{p}+\frac{n_2u_2}{q}\right)\cdot \ZI_{\delta_{u_1}^{(p)}\times \delta_{u_2}^{(q)}}(\x).\label{u17}
\end{align}
We prove the following
\begin{proposition}\label{P1}
If $p$ and $q$ are coprime numbers, then the  systems $T^{(pq)}$ and $T^{(p)}\times T^{(q)}$ are equivalent.
\end{proposition}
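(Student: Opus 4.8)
The plan is to reduce the assertion to an elementary congruence together with the Chinese Remainder Theorem. Note first that both systems consist of step functions on partitions into $pq$ cells of equal measure $1/(pq)$: the function $t_n^{(pq)}$ equals the constant $\exp(2\pi i\,nk/(pq))$ on the interval $\delta_k^{(pq)}$, $k\in\ZN_{pq}$, while by \e{u17} the function $t_{n_1}^{(p)}\times t_{n_2}^{(q)}$ equals the constant $\exp\left(2\pi i(n_1u_1/p+n_2u_2/q)\right)$ on the square $\delta_{u_1}^{(p)}\times\delta_{u_2}^{(q)}$, $(u_1,u_2)\in\ZN_p\times\ZN_q$. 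Hence, to prove that $T^{(pq)}$ and $T^{(p)}\times T^{(q)}$ are equivalent it suffices to produce a bijection $\sigma=(\sigma_1,\sigma_2)\colon\ZN_{pq}\to\ZN_p\times\ZN_q$ relabelling the functions together with a bijection $\tau=(\tau_1,\tau_2)\colon\ZN_{pq}\to\ZN_p\times\ZN_q$ relabelling the cells, such that
\[
\exp\left(2\pi i\,\frac{nk}{pq}\right)=\exp\left(2\pi i\left(\frac{\sigma_1(n)\tau_1(k)}{p}+\frac{\sigma_2(n)\tau_2(k)}{q}\right)\right)\qquad\text{for all }n,k\in\ZN_{pq}.
\]
Indeed, such $\sigma$ and $\tau$ identify the value of $t_n^{(pq)}$ on the $k$-th interval with the value of $t_{\sigma_1(n)}^{(p)}\times t_{\sigma_2(n)}^{(q)}$ on the $\tau(k)$-th square, and since corresponding cells have equal Lebesgue measure, the finite-dimensional distributions of the two systems coincide, which is exactly the equality of measures required in the definition of equivalence.

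To construct the bijections I would fix integers $q'$ and $p'$ with $qq'\equiv1\pmod p$ and $pp'\equiv1\pmod q$, which exist because $\gcd(p,q)=1$, and set $\tau(k)=(k\bmod p,\ k\bmod q)$ and $\sigma(n)=(q'n\bmod p,\ p'n\bmod q)$, the residues being taken in $\ZN_p$ and $\ZN_q$ respectively. By the Chinese Remainder Theorem the map $k\mapsto(k\bmod p,\ k\bmod q)$ carries $\ZN_{pq}$ bijectively onto $\ZN_p\times\ZN_q$, so $\tau$ is a bijection; and $\sigma$ is this same map followed by multiplication by the units $q'$ and $p'$ in the two coordinates, hence is a bijection as well.

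It then remains to verify the displayed identity, i.e. that $nk\equiv q\,\sigma_1(n)\tau_1(k)+p\,\sigma_2(n)\tau_2(k)\pmod{pq}$. By the Chinese Remainder Theorem this can be checked modulo $p$ and modulo $q$ separately: modulo $p$ the term $p\,\sigma_2(n)\tau_2(k)$ drops out and, since $\sigma_1(n)\equiv q'n$ and $\tau_1(k)\equiv k\pmod p$, the right-hand side is $\equiv qq'\,nk\equiv nk\pmod p$; modulo $q$ the symmetric computation gives $\equiv pp'\,nk\equiv nk\pmod q$. Hence the two sides agree modulo $pq$. Throughout only the residue classes of the indices matter, since every index enters solely through a root of unity, so the choice of representatives in $\ZN_{pq}$, $\ZN_p$, $\ZN_q$ is immaterial. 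I do not expect a genuine obstacle here: once one sees that equivalence reduces to matching the two finite ``multiplication tables'', the remaining content is precisely the Chinese Remainder Theorem, the only mild subtlety being the bookkeeping forced by the $\{1,\ldots,l\}$ indexing convention.
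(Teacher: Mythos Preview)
Your proof is correct and follows essentially the same route as the paper: both reduce equivalence to matching the two value tables via a pair of index bijections built from the Chinese Remainder Theorem. The only cosmetic difference is the specific choice of bijections---the paper takes $\phi(\n)=\tau(n_1,n_2)$ and $\psi(\u)=\tau(u_1,-u_2)(q-p)\bmod^* pq$ (where $\tau$ is the CRT inverse), whereas you use the CRT projection for cells and premultiply by the inverses $q',p'$ for function indices; your version is arguably the cleaner of the two.
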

\begin{lemma}\label{L4}
	For any coprime numbers $p$ and $q$, there are two one to one mappings $\phi$ and $\psi$ acting from $\ZN_p\times\ZN_q$ to $\ZN_{pq}$ such that
	\begin{equation}\label{x26}
	\left\{\frac{n_1u_1}{p}+\frac{n_2u_2}{q}\right\}=\left\{\frac{\phi(\n)\psi(\u)}{pq}\right\},
	\end{equation}
where $\{a\}$ denotes the fractional part of a real number $a$.
\end{lemma}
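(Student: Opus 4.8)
The plan is to build both maps explicitly from the Chinese Remainder Theorem, exploiting $\gcd(p,q)=1$. First I would clear denominators on the left side of \e{x26}, writing
\begin{equation*}
\frac{n_1u_1}{p}+\frac{n_2u_2}{q}=\frac{qn_1u_1+pn_2u_2}{pq},
\end{equation*}
so that the identity \e{x26} is equivalent to the congruence $\phi(\n)\psi(\u)\equiv qn_1u_1+pn_2u_2\pmod{pq}$, and, because $p$ and $q$ are coprime, to the pair of congruences
\begin{equation*}
\phi(\n)\psi(\u)\equiv qn_1u_1 \pmod p,\qquad \phi(\n)\psi(\u)\equiv pn_2u_2 \pmod q
\end{equation*}
(here $pn_2u_2\equiv0\pmod p$ and $qn_1u_1\equiv0\pmod q$).

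Next I would define the maps so as to force exactly these two congruences. Let $\psi(\u)$ be the unique element of $\ZN_{pq}$ with $\psi(\u)\equiv u_1\pmod p$ and $\psi(\u)\equiv u_2\pmod q$; since each $\ZN_l=\{1,\dots,l\}$ is a complete residue system modulo $l$ (with $l$ playing the role of $0$), this is precisely the CRT bijection $\ZN_p\times\ZN_q\to\ZN_{pq}$. Let $\phi(\n)$ be the unique element of $\ZN_{pq}$ with $\phi(\n)\equiv qn_1\pmod p$ and $\phi(\n)\equiv pn_2\pmod q$. Since $\gcd(p,q)=1$, multiplication by $q$ is a permutation of $\ZZ/p$ and multiplication by $p$ is a permutation of $\ZZ/q$, so $\n\mapsto(qn_1\bmod p,\,pn_2\bmod q)$ is a bijection $\ZN_p\times\ZN_q\to\ZZ/p\times\ZZ/q$; composing with the inverse CRT isomorphism and choosing representatives in $\ZN_{pq}$ shows $\phi$ is again a bijection. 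With these definitions, reducing $\phi(\n)\psi(\u)$ modulo $p$ gives $qn_1\cdot u_1=qn_1u_1$ and modulo $q$ gives $pn_2\cdot u_2=pn_2u_2$, which are exactly the two congruences needed; dividing by $pq$ and passing to fractional parts then gives \e{x26}.

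No step here presents a real obstacle. The only points that need care are the bookkeeping of representatives — the sets $\ZN_l$ are $\{1,\dots,l\}$ rather than $\{0,\dots,l-1\}$, which is harmless since $l\equiv0\pmod{l}$ — and verifying that the twisted assignment $\n\mapsto(qn_1,pn_2)$ still traverses a complete residue system, so that $\phi$ really is one-to-one; both are immediate consequences of $\gcd(p,q)=1$.
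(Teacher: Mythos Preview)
Your proof is correct and is essentially the same CRT construction as the paper's: both take one map to be the straight CRT bijection and put a multiplicative twist on the other so that the product $\phi(\n)\psi(\u)$ satisfies $\phi(\n)\psi(\u)\equiv qn_1u_1\pmod p$ and $\equiv pn_2u_2\pmod q$. The only cosmetic difference is that the paper places the twist on $\psi$ (writing it as $\tau(u_1,-u_2)\cdot(q-p)$, which reduces to $qu_1\bmod p$ and $pu_2\bmod q$) while you place it on $\phi$; the resulting product modulo $pq$ is identical, and your congruence formulation is arguably more transparent than the paper's fractional-part computation.
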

\begin{proof}
	According to Chinese reminder theorem for every pair $(n_1,n_2)$ of integers $n_1\in \ZN_p$ and $n_2\in \ZN_q$ one can find a unique $l\in \ZN_{pq}$ such that
	\begin{align*}
	l=n_1\mod p,\\
	l=n_2\mod q,
	\end{align*}
	and this defines a one to one mapping $\tau$ from $\ZN_p\times\ZN_q$ to $\ZN_{pq}$ such that $\tau(n_1,n_2)=l$. For our further convenience we extend the mapping $\tau $ over whole $\ZZ^2_+$ periodically by $\tau(n_1,n_2)=\tau(n_1+p\cdot k,n_2+q\cdot j)$ satisfied for any pair of integers $k,j$.  Define
	\begin{align*}
	&\phi(\n)=\tau(n_1,n_2),\\
	&\psi(\u)=\tau(u_1,-u_2)\cdot(q-p)\mod^* pq,
	\end{align*}
	where 
\begin{equation}
m\mod^*n=\left\{\begin{array}{lrl}
n&\hbox{ if }& m\mod n=0,\\
m\mod n&\hbox{ if }&m\mod n\neq 0 .
\end{array}
\right.
\end{equation}	
	Clearly $\phi$ and $\psi$ determine one to one mappings from $\ZN_p\times\ZN_q$ to $\ZN_{pq}$. Besides we have
	\begin{align*}
	\left\{\frac{\phi(\n)\psi(\u)}{pq}\right\}&=	\left\{\frac{\tau(n_1,n_2)\tau(u_1,-u_2)(q-p)}{pq}\right\}\\
	&=	\left\{\frac{\tau(n_1,n_2)\tau(u_1,-u_2)}{p}-\frac{\tau(n_1,n_2)\tau(u_1,-u_2)}{q}\right\}\\
	&=\left\{\frac{n_1u_1}{p}+\frac{n_2u_2}{q}\right\}.
	\end{align*}
\end{proof}
\begin{proof}[Proof of \pro {P1}]
Let $\phi$ and $\psi$ be mappings taken from \lem{L4}. The mapping $\phi$ produces a one to one correspondence 
\begin{equation}
t_{n_1}^{(p)}\times t_{n_2}^{(q)}\to t_{\phi(\n)}^{(pq)},
\end{equation}
while $\psi$ produces
\begin{equation}
\delta_{u_1}^{(p)}\times \delta_{u_2}^{(q)}\to \delta_{\psi(u_1,u_2)}^{(pq)}.
\end{equation}
In light of \e{u17} and \e{x26} one can see that each function $t_{n_1}^{(p)}\times t_{n_2}^{(q)}$ takes the same value on $\delta_{u_1}^{(p)}\times \delta_{u_2}^{(q)}$ as $t_{\phi(n_1,n_2)}^{(pq)}$ on $\delta_{\psi(u_1,u_2)}^{(pq)}$. This obviously implies the equivalency of the systems $T^{(pq)}$ and $T^{(p)}\times T^{(q)}$.
\end{proof}
\section{The main lemma}
\begin{lemma}\label{L18}
	If functions $f,g\in L^2(\ZT)$ satisfy strong orthogonality condition
	\begin{equation}\label{x64}
	 \int_\ZT f(x)g(h-x)dx=0 \text { for any }h\in \ZR,
	\end{equation}
	then they have non-overlapping Fourier series:
\end{lemma}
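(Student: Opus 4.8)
The plan is to recognize the strong orthogonality condition \e{x64} as the assertion that the convolution $f\ast g$ vanishes identically on $\ZT$, and then to read off the conclusion from the convolution rule for Fourier coefficients.

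First I would record that, since $f,g\in L^2(\ZT)$, the convolution
\[
(f\ast g)(h)=\int_\ZT f(x)g(h-x)\,dx
\]
is well defined for \emph{every} $h$, and is a continuous function of $h\in\ZT$: indeed, by the Cauchy--Schwarz inequality,
\[
\left|(f\ast g)(h)-(f\ast g)(h')\right|\le \|f\|_2\cdot\|g(h-\cdot)-g(h'-\cdot)\|_2,
\]
and the right-hand side tends to $0$ as $h\to h'$ by the continuity of translation in $L^2(\ZT)$. Hence the hypothesis \e{x64}, holding for all $h$, says precisely that $f\ast g\equiv 0$ on $\ZT$.

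Next I would compute the Fourier coefficients of $f\ast g$. Writing $\hat\varphi(n)=\int_\ZT\varphi(x)e^{-2\pi inx}\,dx$ and using Fubini's theorem (legitimate since $f,g\in L^2(\ZT)\subset L^1(\ZT)$), the substitution $h=x+y$ gives
\[
\widehat{f\ast g}(n)=\int_\ZT\int_\ZT f(x)g(y)e^{-2\pi in(x+y)}\,dx\,dy=\hat f(n)\,\hat g(n),\qquad n\in\ZZ.
\]
Since $f\ast g\equiv 0$, all of its Fourier coefficients vanish, so $\hat f(n)\,\hat g(n)=0$ for every $n\in\ZZ$. Consequently, for each $n$ at least one of $\hat f(n)$, $\hat g(n)$ is zero, i.e.\ the spectra $\{n\in\ZZ:\ \hat f(n)\ne 0\}$ and $\{n\in\ZZ:\ \hat g(n)\ne 0\}$ are disjoint, which is exactly the statement that $f$ and $g$ have non-overlapping Fourier series.

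There is essentially no obstacle in this argument; the only two points deserving a word of justification are that $f\ast g$ is a genuine continuous function (so that the pointwise hypothesis \e{x64} is meaningful and forces $f\ast g\equiv 0$, not merely a.e.), and the interchange of the order of integration in the convolution identity $\widehat{f\ast g}=\hat f\,\hat g$ — both are standard facts about $L^2$ functions on the torus.
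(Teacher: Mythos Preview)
Your proof is correct and follows essentially the same route as the paper: recognize \e{x64} as $f\ast g\equiv 0$, invoke $\widehat{f\ast g}(n)=\hat f(n)\hat g(n)$, and conclude that for each $n$ one of the two coefficients vanishes. The paper's version is terser (it omits the continuity and Fubini remarks you included), but the argument is the same.
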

\begin{proof}
The condition \e{x64} implies 
	\begin{equation*}
\hat f(n)\cdot \hat g(n)=(\widehat{ f\star g})(n)=0.
	\end{equation*}
So for each $n\in \ZZ^2$ either $\hat f(n)=0$ or $\hat g(n)=0$. This completes the proof.
\end{proof}
Now we are able to proof the main lemma.
\begin{lemma}\label{L10}
	There exist a sequence of one dimensional trigonometric polynomials 
	\begin{align*}
	Q_k(x)=\sum_{n\in U_k}a_ne^{2\pi i nx},\quad k=1,2,\ldots ,N,
	\end{align*}
	with non-overlapping spectrums $U_k$ such that 
	\begin{align}
		&U_k\subset [1,N^{70}],\label{x62}\\
	&\left\|\max_{1\le m\le N}\left|\sum_{k=1}^mQ_k\right|\right\|_2\gtrsim \log N\left\|\sum_{k=1}^NQ_k\right\|_2.\label{x63}
	\end{align}
\end{lemma}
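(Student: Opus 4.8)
The plan is to carry the two–dimensional trigonometric polynomials $p_k$ furnished by the Proposition of Section~5 down to a single variable, passing through the \emph{discrete} trigonometric systems and the equivalence $T^{(pq)}\equiv T^{(p)}\times T^{(q)}$ of \pro{P1}. Fix throughout two coprime integers $p,q$ with $p,q\asymp N^{18}$ (so $p,q>4N^{15}$ and $pq\asymp N^{36}$) and an integer $M\asymp N^{70}$ with $2M+1<N^{70}$ and $N^{32}pq\ll M$. \emph{Step 1 (discretisation).} In $p_k(\x)=\sum_{\n\in G_k}a_{\n}e^{2\pi i\n\cdot\x}$ replace each factor $e^{2\pi in_1x_1}$, resp.\ $e^{2\pi in_2x_2}$, by the discrete character $t_{n_1}^{(p)}(x_1)$, resp.\ $t_{n_2}^{(q)}(x_2)$, obtaining $\tilde p_k=\sum_{\n\in G_k}a_{\n}\,t_{n_1}^{(p)}\times t_{n_2}^{(q)}$. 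Since $\|t_n^{(l)}-e^{2\pi in(\cdot)}\|_\infty\le 2\pi|n|/l$ and all frequencies of $p_k$ are $\le 4N^{15}$, the differences $e_{\n}:=t_{n_1}^{(p)}\times t_{n_2}^{(q)}-e^{2\pi i\n\cdot\x}$ satisfy $\|e_{\n}\|_\infty\lesssim N^{15}/p\lesssim N^{-3}$; moreover, because $p,q>4N^{15}$, distinct $\n\in\bigcup_kG_k$ have pairwise distinct residues modulo $(p,q)$, so the $e_{\n}$ are pairwise orthogonal and $\|\tilde p_k-p_k\|_2\le(\max_{\n}\|e_{\n}\|_2)\,\|p_k\|_2\lesssim N^{-3}\|p_k\|_2$. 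As the $p_k$ are non–overlapping, $\sum_k\|p_k\|_2^2=\|\sum_kp_k\|_2^2\asymp\log N$ by \e{x25} and \e{x71}, whence $\sum_{k=1}^N\|\tilde p_k-p_k\|_2\lesssim N^{-2}$. Thus the $\tilde p_k$ are non–overlapping polynomials of $T^{(p)}\times T^{(q)}$ still obeying the maximal inequality with constant $\asymp\log N$.

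\emph{Step 2 (to one dimension, discretely).} Apply \pro{P1}: the equivalence $T^{(p)}\times T^{(q)}\equiv T^{(pq)}$ carries the $\tilde p_k$ to polynomials $R_k=\sum_{l\in V_k}b_l\,t_l^{(pq)}$ with pairwise disjoint index sets $V_k\subset\ZN_{pq}$. By the proof of \pro{P1} this equivalence is implemented by a measure–preserving identification of the cell decompositions of $[0,1)^2$ and $[0,1)$ that intertwines the two systems, so it preserves the $L^2$–norm of every function assembled from finitely many members; applied to the partial sums $\sum_{k=1}^mR_k$ it gives $\|\sum_kR_k\|_2=\|\sum_k\tilde p_k\|_2$ and $\bigl\|\max_{1\le m\le N}|\sum_{k=1}^mR_k|\bigr\|_2=\bigl\|\max_{1\le m\le N}|\sum_{k=1}^m\tilde p_k|\bigr\|_2$. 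Hence the maximal inequality persists, the $V_k$ are disjoint, and $\sum_k\|R_k\|_2^2\asymp\log N$.

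\emph{Step 3 (from the discrete to the genuine trigonometric system).} This is the only delicate point, and I expect it to be the main obstacle: one cannot replace $T^{(pq)}$ by a block of the trigonometric system \emph{exactly} — a trigonometric polynomial is real–analytic and so cannot realise the discrete law of $t_l^{(pq)}$ — and the difficulty is that the harmonics of $t_l^{(pq)}$ reach $pq$. A direct computation gives $\widehat{t_l^{(pq)}}(n)=\dfrac{pq\,(e^{2\pi in/pq}-1)}{2\pi in}$ for $n\equiv l\pmod{pq}$ and $0$ otherwise, so $|\widehat{t_l^{(pq)}}(n)|\le pq/(\pi|n|)$ and the tail of the Fourier series of $t_l^{(pq)}$ beyond level $M$ has $L^2$–norm $\lesssim\sqrt{pq/M}$. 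Let $\tau_l$ be the truncation of $t_l^{(pq)}$ to frequencies $|n|\le M$; then $\|t_l^{(pq)}-\tau_l\|_2\lesssim\sqrt{pq/M}$, and since $|V_k|\le|G_k|\lesssim N^{30}$ and $\sum_k\|R_k\|_2^2\asymp\log N$, Cauchy–Schwarz yields $\sum_{k=1}^N\bigl\|R_k-\sum_{l\in V_k}b_l\tau_l\bigr\|_2\lesssim N^{16}\sqrt{pq/M}\,\sqrt{\log N}=o(1)$ by the choice of $M$. Crucially, for distinct $l$ the truncations $\tau_l$ live on distinct residue classes modulo $pq$, so the trigonometric polynomials $\tilde Q_k:=\sum_{l\in V_k}b_l\tau_l$ are non–overlapping; multiplying by a single exponential, $Q_k(x):=e^{2\pi i(M+1)x}\tilde Q_k(x)$ has all its harmonics in $[1,2M+1]\subset[1,N^{70}]$ — this is \e{x62} — while leaving each modulus $\bigl|\sum_{k=1}^mQ_k\bigr|$ unchanged. (Non–overlapping of the $Q_k$ may alternatively be checked via \lem{L18}, verifying $Q_k\star Q_{k'}\equiv0$ for $k\ne k'$.)

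\emph{Step 4 (conclusion).} The passages of Steps~1–3 perturb the $p_k$ only by the two error sums above (the transport of Step~2 and the twist of Step~3 being exact), so, carrying them through the sublinear majorant $\{f_k\}\mapsto\max_{1\le m\le N}|\sum_{k=1}^mf_k|$, which on error terms is dominated by $\sum_k\|f_k\|_2$, we get
\[
\Bigl\|\max_{1\le m\le N}\Bigl|\sum_{k=1}^mQ_k\Bigr|\Bigr\|_2\ \ge\ \Bigl\|\max_{1\le m\le N}\Bigl|\sum_{k=1}^mp_k\Bigr|\Bigr\|_2-o(1)\ \gtrsim\ \log N\Bigl\|\sum_{k=1}^Np_k\Bigr\|_2\ \gtrsim\ \log N\Bigl\|\sum_{k=1}^NQ_k\Bigr\|_2 ,
\]
the last step using $\|\sum_kQ_k\|_2=\|\sum_k\tilde Q_k\|_2\asymp\|\sum_kR_k\|_2\asymp\|\sum_kp_k\|_2\asymp\sqrt{\log N}$. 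Together with \e{x62} this is precisely \e{x63}, which proves the lemma. As noted, the crux is Step~3: everything works only because $pq$ and the cutoff $M$ can be chosen as powers of $N$ finely balanced so that discretisation, equivalence–transport and de–discretisation each cost $o(\log N)$ while the resulting harmonics stay below $N^{70}$.
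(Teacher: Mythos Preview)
Your argument is correct and follows the same three-stage architecture as the paper (discretise via $T^{(p)}\times T^{(q)}$, transport to $T^{(pq)}$ by \pro{P1}, then de-discretise to genuine trigonometric polynomials), so the only substantive difference is in Step~3. The paper takes $p=N^{31}$, $q=N^{31}+1$, invokes \lem{L18} to see that the one-dimensional step functions $R_k$ already have disjoint Fourier spectra, and then de-discretises by applying the Ces\`aro means $\sigma_{N^{66}}$ together with the estimate $\|\sigma_n(f)-f\|_2\lesssim\omega_2(\delta,f)+\|f\|_2/\sqrt{n\delta}$ and the bound $\omega_2(\delta,R_k)\lesssim\|R_k\|_2/N$ coming from the step structure; the shift $e^{2\pi i(N^{66}+1)x}$ then lands the spectra in $[1,N^{70}]$. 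Your route instead computes the Fourier coefficients of $t_l^{(pq)}$ explicitly, observes that they are supported on the residue class $l\bmod pq$, and truncates at level $M\asymp N^{70}$; this makes the non-overlapping of the $Q_k$ transparent without appealing to \lem{L18}, and replaces the Fej\'er-kernel/modulus-of-continuity machinery by the elementary tail bound $\|t_l^{(pq)}-\tau_l\|_2\lesssim\sqrt{pq/M}$. Both approaches cost $o(1)$ against the majorant and yield the same conclusion; yours is a bit more direct, while the paper's has the advantage that one need not compute $\widehat{t_l^{(pq)}}$ explicitly. Your orthogonality observation for the errors $e_{\mathbf n}$ in Step~1 is a nice sharpening, though (as in the paper) the cruder bound via $|G_k|\lesssim N^{30}$ would already suffice.
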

\begin{proof}
 For the coprime numbers $p=N^{31}$ and $q=N^{31}+1$ we consider the discrete double trigonometric system $T^{(p)}\times T^{(q)}$. It is easy to see that 
 \begin{equation}\label{u18}
 \left| e^{2\pi i\textbf{n}\cdot \textbf{x}}-(t_{n_1}^{(p)}\times t_{n_2}^{(q)})(\x)\right|\lesssim \frac{1}{N^{16}},\quad \x\in \ZT^2,\, \textbf{n}=(n_1,n_2)\in B(0,4N^{15})\cap \ZZ_+.
 \end{equation}
 Consider the non-overlapping double discrete trigonometric polynomials
 \begin{equation}
 P_k(\x)=\sum_{\n\in G_k}a_{\textbf{n}}(t_{n_1}^{(p)}\times t_{n_2}^{(q)})(\x),\quad k=1,2,\ldots, N,
 \end{equation}
 with the same coefficients as we have in \e{x61}, where $G_k\subset \ZN_p\times\ZN_q$. According to \pro{P1} the systems $T^{(pq)}$ and $T^{(p)}\times T^{(q)}$ are equivalent. So the sequence of double polynomials $P_k$ generates a one dimensional sequence of non-overlapping polynomials $R_k\in T^{(pq)}$, $k=1,2,\ldots,N$.  Both sequences share the same logarithmic bound \e{x25} of $p_k$, since from \e{u18} it follows that
 \begin{equation*}
 \| P_k-p_k\|_2\lesssim \frac{1}{N}\left(\sum_{\n\in G_k}a_{\n}^2\right)^{1/2}\le \frac{1}{N}\left\|\sum_{k=1}^Np_k\right\|_2.
 \end{equation*}
  Disjointness of the spectrums of $R_k$ as polynomials of $T^{(pq)}$ implies 
 \begin{equation}
 \int_\ZT R_k(x)R_m(h-x)dx=0 \text { for any }h\in \ZR,\,k,m\in \ZN_{pq},\, k\neq m.
 \end{equation}
 Thus, according to \lem{L18}, the functions $R_k\in L^2(\ZT)$ have non-overlapping spectrums of Fourier series. Besides, they are step functions with the intervals of constancy having the length $(pq)^{-1}\sim N^{-62}$. Let  
 \begin{equation*}
 Q_k(x)=e^{2\pi i (N^{66}+1)x}\cdot \sigma_{N^{66}}(x,R_k),
 \end{equation*}
 where $\sigma_{n}(x,f)$ denotes the $n$ order $(C,1)$ mean of a function $f$. Cleraly we will have \e{x62}. Recall the approximation property of the $(C,1)$  means 
 \begin{align}
 \|\sigma_n(f)-f\|_2&\le \left(\int_\ZT\int_\ZT K_n(t)|f(x+t)-f(x)|^2dtdx\right)^{1/2}\\
 &\le \left(\int_{-\delta}^\delta K_n(t)\int_\ZT|f(x+t)-f(x)|^2dxdt\right)^{1/2}\\
 &\qquad +\left(\int_{\delta<|t|<\pi}K_n(t)\int_\ZT |f(x+t)-f(x)|^2dxdt\right)^{1/2}\\
 &\lesssim \omega_2(\delta,f) +\|f\|_2\left(\int_\delta^\infty \frac{1}{nt^2}dt\right)^{1/2}\\
 &\lesssim \omega_2(\delta,f) +\frac{\|f\|_2}{\sqrt{n\delta}}.
 \end{align}
Using this inequality for $f=R_k$, $n=N^{66}$ and $\delta=N^{-64}$, as well as the easily checked bound $\omega_2(\delta,R_k)\lesssim \|R_k\|_2/N$, one can obtain
 \begin{equation*}
 \left\|Q_k(x)-e^{2\pi i (N^{66}+1)x}\cdot R_k(x)\right\|_2\lesssim\frac{ \|R_k\|_2}{N}.
 \end{equation*}
 The latter immediately yields the logarithmic bound \e{x63}, since we have the same bound for $R_k$. Lemma is proved.

\end{proof}

\section{Proof of the main Theorem and \cor{C0}}
\begin{lemma}\label{L11}
	Let $T$ be a sublinear operator satisfying 
	\begin{align*}
	&\|T\|_{L^2\to L^{2,\infty}}\le c\sqrt {\log N},\\
	&\|T\|_{L^2\to L^{\infty}}\le  N
	\end{align*}
	and $c\log N\ge 1$. Then we have
	\begin{equation}
	\|T\|_{L^2\to L^{2}}\lesssim c\log N.
	\end{equation}
\end{lemma}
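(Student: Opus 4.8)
The plan is a routine real-variable truncation argument, interpolating between the weak-$L^2$ bound and the crude $L^\infty$ bound while exploiting that the ambient measure is a probability measure. We may assume $N$ is large, since for bounded $N$ one has $\|T\|_{L^2\to L^2}\le\|T\|_{L^2\to L^\infty}\le N\lesssim c\log N$ directly (use $c\log N\ge 1$, and note $1/\log N$ is then bounded below, so $c$ is bounded below). By homogeneity it suffices to bound $\|Tf\|_2$ for $\|f\|_2\le 1$.

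For such $f$ the hypothesis $\|T\|_{L^2\to L^\infty}\le N$ forces $|Tf|\le N$ almost everywhere, so the distribution function $\mu_{Tf}(\lambda)=|\{|Tf|>\lambda\}|$ vanishes for $\lambda>N$; moreover $\mu_{Tf}(\lambda)\le 1$ always, and the weak bound gives $\mu_{Tf}(\lambda)\le c^2\log N/\lambda^2$. Hence, by the layer-cake formula and a split at the cutoff $\lambda_0=c\sqrt{\log N}$,
\begin{equation*}
\|Tf\|_2^2=\int_0^N 2\lambda\,\mu_{Tf}(\lambda)\,d\lambda\le\int_0^{\lambda_0}2\lambda\,d\lambda+2c^2\log N\int_{\lambda_0}^N\frac{d\lambda}{\lambda}=\lambda_0^2+2c^2\log N\,\log\frac{N}{\lambda_0},
\end{equation*}
where the last integral is read as $0$ when $\lambda_0\ge N$. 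If $\lambda_0\ge N$, the first term alone gives $\|Tf\|_2^2\le\lambda_0^2=c^2\log N\le(c\log N)^2$. If $\lambda_0<N$, then since $c\log N\ge 1$ we have $\lambda_0=c\sqrt{\log N}\ge 1/\sqrt{\log N}$, so $\log(N/\lambda_0)\le\log N+\tfrac12\log\log N\lesssim\log N$, whence $\|Tf\|_2^2\le\lambda_0^2+2c^2\log N\log(N/\lambda_0)\lesssim c^2\log^2N$. Taking square roots proves the lemma.

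There is no real obstacle here; the only thing to get right is the choice of $\lambda_0$ (optimizing $\lambda_0^2+2c^2\log N\log(N/\lambda_0)$ over $\lambda_0$ leads to $\lambda_0\sim c\sqrt{\log N}$) and the elementary check that $\log(N/\lambda_0)\lesssim\log N$, which is precisely where the innocuous normalization $c\log N\ge1$ is used. The lemma is then applied, in contrapositive form, to $T=T_{\sigma,N}$ — which satisfies $\|T_{\sigma,N}\|_{L^2\to L^\infty}\le N$ since any partial sum of $N$ Fourier terms is pointwise bounded by $\sqrt N\,\|f\|_2$ — in order to extract the weak-type lower bound of \cor{C0} from the strong $L^2$ lower bound of \trm{T3}.
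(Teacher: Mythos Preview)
Your proof is correct and follows the same layer-cake truncation argument as the paper. The paper's version is marginally simpler: it splits at $\lambda_0=1$ rather than at $\lambda_0=c\sqrt{\log N}$, so the low-$\lambda$ contribution is just $\le 2$ and the hypothesis $c\log N\ge 1$ is invoked only once, to absorb that additive constant into $c^2\log^2 N$.
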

\begin{proof}
For a given function $f\in L^2(\ZT)$, $\|f\|_2\le 1$, we have $\|T(f)\|_\infty\le N$. Denote
\begin{align*}
\phi(\lambda)=|\{x:\, |Tf(x)|>\lambda\}|.
\end{align*}
We have 
\begin{align}
&\phi(\lambda)=0\text { if } \lambda>N,\\ 
&\phi(\lambda)\le \frac{\|T\|_{L^2\to L^{2,\infty}}^2}{\lambda^2}\le \frac{c^2\log N}{\lambda^2},\quad \lambda>0,
\end{align} 
and so
\begin{align}
\|T(f)\|_2^2=2\int_0^\infty \lambda \phi(\lambda)d\lambda=2\int_0^N \lambda \phi(\lambda)d\lambda\le 2+2\int_1^N\lambda \phi(\lambda)d\lambda\lesssim c^2\log^2N.
\end{align}

\end{proof}
\begin{proof}[Proof of \trm{T3}]
The upper bound of \e{a50} follows from the Menshov-Rademacher inequality. The lower bound  
\begin{equation}
\max_\sigma\|T_{\sigma,N}\|_{L^2\to L^2}\gtrsim  \log N
\end{equation}
easily follows from \lem{L10}. 
\end{proof}
\begin{proof}[Proof of \cor{C0}]
	Combination of \lem{L11} and lower bound of \e{a50} implies \e{a51}.
\end{proof}
\section{Proof of \cor{C1}}
The next lemma based on the inequality \e{a51}. Denote by $\ZP_N$ the family of one dimensional trigonometric polynomials of the form
\begin{equation*}
\sum_{k=1}^Na_ke^{2\pi ik},
\end{equation*}
where $a_k$ are complex numbers.
\begin{lemma}\label{L12}
	For any $N>N_0$ there exists a polynomial $P\in \ZP_N$ and a rearrangement $\sigma\in \Sigma_N$ such that $\|P\|_2\sim 1$ and 
	\begin{equation}\label{u23}
	\left|\left\{x\in \ZT:\, T_{\sigma, N}(x, P)>\sqrt {\log N}\right\}\right|\gtrsim 1.
	\end{equation}
\end{lemma}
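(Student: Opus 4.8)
The plan is to derive \e{u23} from \cor{C0}. By \e{a51} fix a permutation $\sigma\in \Sigma_N$ with $\|T_{\sigma,N}\|_{L^2\to L^{2,\infty}}\gtrsim \sqrt{\log N}$, and choose $h\in L^2(\ZT)$, $\|h\|_2\le 1$, together with a level $\lambda>0$ for which
\[
\lambda^2\big|\{x\in\ZT:\ |T_{\sigma,N}h(x)|>\lambda\}\big|\ \gtrsim\ \log N .
\]
Since $T_{\sigma,N}h$ depends only on the Fourier coefficients $\hat h(1),\dots,\hat h(N)$ and $T_{\sigma,N}$ is positively homogeneous, I would replace $h$ by the normalized polynomial $P_1=\bigl(\sum_{k=1}^N\hat h(k)e^{2\pi ikx}\bigr)\big/\bigl\|\sum_{k=1}^N\hat h(k)e^{2\pi ikx}\bigr\|_2$; this leaves $T_{\sigma,N}h$ unchanged and, by Bessel's inequality, does not increase the $L^2$ norm, so $P_1\in\ZP_N$, $\|P_1\|_2=1$, and the displayed estimate persists for $P_1$ and some level $\lambda>0$. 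As the measure on the left is $\le 1$, necessarily $\lambda\gtrsim \sqrt{\log N}$.

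If $\lambda\le C\sqrt{\log N}$ for an absolute constant $C$ the lemma is immediate, because then $|\{T_{\sigma,N}P_1>\sqrt{\log N}\}|\ge |\{T_{\sigma,N}P_1>\lambda\}|\gtrsim \log N/\lambda^2\gtrsim 1$, and $P_1\in\ZP_N$, $\|P_1\|_2=1$, already works. So the only real content is the regime where $\lambda$ is much larger than $\sqrt{\log N}$, i.e.\ where the maximal function of the extremal polynomial is large only on the small set $E=\{T_{\sigma,N}P_1>\lambda\}$, $|E|\gtrsim \log N/\lambda^2$ — which is precisely the situation for the polynomial produced by the preceding sections, whose maximal partial sums imitate $\|\x\|^{-1}\log\|\x\|$ and reach height $\sim\lambda$ only on a thin annulus.

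In that regime I would \emph{amplify} $P_1$ by superposing many shifted copies placed in pairwise disjoint frequency blocks. Put $J=\lceil 1/|E|\rceil$ and
\[
P(x)=\frac1{\sqrt J}\sum_{j=0}^{J-1}\varepsilon_j\,e^{2\pi i(2jN)x}\,P_1\!\bigl(x-\tfrac jJ\bigr),
\]
whose $j$-th summand has spectrum inside $(2jN,2jN+N]$; by disjointness $\|P\|_2^2=\frac1J\sum_j\|P_1\|_2^2=1$, and $P\in\ZP_M$ with $M\le 2JN$. Ordering the frequencies block by block (and inside each block according to $\sigma$) gives a permutation $\sigma'\in\Sigma_M$ for which, for every $j$, a suitable partial sum of $P$ ending inside block $j+1$ equals $\tfrac1{\sqrt J}\bigl(\sum_{i\le j}\varepsilon_i e^{2\pi i(2iN)x}P_1(x-\tfrac iJ)\bigr)$ plus a modulated partial sum of $P_1(\cdot-\tfrac{j+1}J)$; from $\max_m|A+B_m|\ge\max_m|B_m|-|A|$ one gets
\[
T_{\sigma',M}P(x)\ \ge\ \frac1{\sqrt J}\Bigl(T_{\sigma,N}P_1\bigl(x-\tfrac{j+1}J\bigr)-\Bigl|\sum_{i\le j}\varepsilon_i e^{2\pi i(2iN)x}P_1\bigl(x-\tfrac iJ\bigr)\Bigr|\Bigr).
\]
On the union $G=\bigcup_{j}\bigl(E+\tfrac{j+1}J\bigr)$, which has measure $\gtrsim 1$ once the shifts (or the number of copies) are chosen so the translates of $E$ genuinely cover a fixed fraction of $\ZT$, the first term is $\ge \lambda/\sqrt J\gtrsim \sqrt{\log N}$ for an appropriate $j=j(x)$; a Rademacher selection of the signs $\varepsilon_j$ together with a Doob-type maximal estimate bounds $\max_j\bigl\|\sum_{i\le j}\varepsilon_i e^{2\pi i(2iN)x}P_1(\cdot-\tfrac iJ)\bigr\|_2\lesssim \sqrt J$, so the interference term is $\le\tfrac12\sqrt{\log N}$ off a set of small measure. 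Hence $T_{\sigma',M}P\gtrsim\sqrt{\log N}$ on a set of measure $\gtrsim 1$, which — with $M$ (a fixed power of $N$, in fact $M<N^2$ in the worst case) playing the role of $N$ and the absolute constant coming from \e{a51} absorbed into the threshold — is exactly \e{u23}. The main obstacle is this last step: controlling the self-interference of the superposed copies uniformly in $j$ while simultaneously arranging the shifts so that the small translates $E+\tfrac jJ$ cover a set of measure bounded below, so that signal strength and the enlarged threshold $\sqrt{\log M}$ remain comparable.
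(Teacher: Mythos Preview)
Your amplification strategy is exactly the paper's, but the three points you flag as obstacles all have clean resolutions that you are missing, and one of your choices creates a genuine gap.

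First, the ``self-interference'' you regard as the main obstacle is illusory: no random signs and no Doob inequality are needed. With the blocks ordered consecutively, for any block index $k$ and any two indices $m_1<m_2$ with $m_1$ at the end of block $k-1$ and $m_2$ inside block $k$, the difference $S_{m_2}-S_{m_1}$ is precisely $\frac{1}{\sqrt{J}}$ times a modulated $\tau$-partial sum of $P_1(\cdot-x_k)$. Since $\max_m|S_m|\ge\tfrac12|S_{m_2}-S_{m_1}|$, you get
\[
T_{\sigma',M}P(x)\ \ge\ \frac{1}{2\sqrt{J}}\,T_{\tau,N}P_1(x-x_k)
\]
directly, with no interference term at all. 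This is how the paper proceeds; the $\varepsilon_j$ are unnecessary.

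Second, the equally spaced shifts $j/J$ need not cover: if $E$ happens to be (approximately) $1/J$-periodic, the union $\bigcup_j(E+j/J)$ can have measure $\sim|E|$, not $\sim 1$. The fix is the standard random-translation lemma (Zygmund, ch.~13, Lemma~1.24): choose $x_0,\dots,x_{J-1}\in\ZT$ so that $\bigl|\bigcup_k(E+x_k)\bigr|\ge 1-(1-|E|)^J\gtrsim 1$. Since the interference term has vanished, the shifts are free parameters.

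Third, the degree bookkeeping as written does not give $P\in\ZP_N$: you end with $P\in\ZP_M$, $M\lesssim N^2$. The paper runs the whole argument from $M=[\sqrt{N}]+1$ instead of $N$; then $\lambda_0\le\sqrt{M}$ forces $J\le M/2$, so the amplified polynomial has degree $\le JM\le M^2/2\le N$ and $\sqrt{\log M}\sim\sqrt{\log N}$. With these three adjustments your outline becomes a complete proof identical to the paper's.
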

\begin{proof}
	Let $M=[\sqrt{N}]+1$. According to \e{a51} there is a polynomial $Q\in \ZP_M$ with $\|Q\|_2=1$ and a rearrangement $\tau\in \Sigma_M$ such that the inequality
	\begin{equation}
	|E|=\left|\left\{x\in \ZT:\,T_{\tau, M}(x, Q)>\lambda_0\right\}\right|\ge\frac{c\log M}{\lambda_0^2},
	\end{equation}
holds for some $\lambda_0>0$. Since $|\ZT|=1$, we have $\lambda_0\ge \sqrt{c\log M}$ and from $\|Q\|_2=1$ it follows that $0<\lambda_0\le \sqrt{M}$. Put $l=[\lambda_0^2/c\log M]$, we have 
\begin{equation}
1\le l\le \frac{M}{c\log M}\le \frac{M}{2},\quad |E|>\frac{1}{l+1},
\end{equation}
for $N>N_0$. By a well-known argument (see \cite{Zyg}, ch. 13, Lemma 1.24) one can find a sequence of points $x_k\in \ZT$, $k=0,1,\ldots,l-1$, such that
\begin{equation}
|F|=\left|\bigcup_{k=0}^{l-1}(E+x_k)\right|\ge 1-(1-|E|)^l\ge 1-(1-{(l+1)}^{-1})^l\gtrsim 1.
\end{equation}
Then we consider the polynomial
\begin{equation}
G(x)=\frac{1}{\sqrt l}\sum_{k=0}^{l-1}Q_k(x),\text{ where }  Q_k(x)=Q(x-x_k)e^{2\pi ikMx}.
\end{equation}
Clearly $G\in\ZP_{lM}\subset  \ZP_N$, since $lM\le N$. Define a rearrangement $\sigma\in \Sigma_N$ by
\begin{align*}
&\sigma(n)=\tau(n-Mk)+Mk\text{ if }Mk<n\le M(k+1),\quad k=0,1,\ldots,l-1,\\
&\sigma(n)=n \text { if }lM<n\le N.
\end{align*}
One can check that $\|G\|_2=1$. Any partial sum of the $\sigma$-rearrangement of the polynomial $\frac{Q_k}{\sqrt l}$ can be written as a difference of two partial sums of $G$. This implies 
\begin{align}
T_{\sigma, N}(x, G)\ge \frac{1}{2\sqrt l}\cdot T_{\tau, M}(x-x_k, Q),\quad x\in \ZT.
\end{align}
Thus for any $x\in E+x_k$ we have
\begin{align}
T_{\sigma, N}(x, G)\ge \frac{1}{2\sqrt l}\cdot T_{\tau, M}(x-x_k, Q)> \frac{\lambda_0}{2\sqrt{l}}\ge \frac{\sqrt{c\log M}}{2}\gtrsim  \sqrt {\log N}.
\end{align}
Thus we get $T_{\sigma, N}(x, G)\gtrsim \sqrt {\log N}$ whenever $x\in F$. Since $|F|\gtrsim 1$, a polynomial $P(x)=c\cdot G(x)$ with a suitable absolute constant $c$ may become our desired polynomial.
\end{proof}
\begin{proof}[Proof of \cor{C1}]
	Using \e{y1} one can define integers $N_k\ge 1$, $k=1,2,\ldots$, such that
	\begin{equation}
	N_{k+1}> 2N_{k},\quad w(2N_k)\le \frac{\log N_k}{k^2},\quad k=1,2,\ldots.
	\end{equation}
	Applying \lem{L12}, we find polynomials $P_k\in \ZP_{N_k}$ and rearrangements $\sigma_k\in \Sigma_{N_k}$ such that $\|P_k\|_2\sim 1$ and the sets
	\begin{equation}
	E_k=\left\{x\in \ZT:T_{\sigma_k}(x, P_k)>\sqrt {\log N_k}\right\}
	\end{equation}
	satisfy $|E_k|>c>0$. It is well known this condition provides a sequence $t_k\in \ZT$ such that 
	\begin{equation}
	\left|\bigcap_{k\ge 1}\bigcup_{n\ge k}(E_n+t_n)\right|=1
	\end{equation}
	 (see \cite{Zyg}, ch. 13, Lemma 1.24). Consider the trigonometric series
	\begin{equation}\label{u19}
	\sum_{k=1}^\infty \frac{1}{k\cdot \sqrt{w(2N_k)}}\cdot P_k(x-t_k)e^{2\pi iN_k x}=\sum_{n=1}^\infty c_ne^{2\pi i nx}.
	\end{equation}
	We have 
	\begin{equation}\label{u20}
	\sum_{n=1}^\infty |c_n|^2w(n)\le \sum_{k=1}^\infty \frac{\|P_k\|_2^2}{k^2}<\infty.
	\end{equation}
	Define a permutation $\sigma$ of $\ZN$ as follows 
	\begin{equation}
	\sigma(n)=\left\{\begin{array}{lcl}
	&\sigma_k(n-N_k)+N_k&\hbox{ if }N_k<n\le 2N_k,\, k=1,2,\ldots,\\
	& n &\hbox{ if }n\notin \cup_{k\ge 1}(N_k,2N_k].
	\end{array}
	\right.
	\end{equation}
		If $x\in \cap_{k\ge 1}\cup_{n\ge k}(E_n+t_n)$, then $x\in E_k+t_k$ for infinitely many $k$. For $x\in E_k+t_k$ we have
	\begin{equation*}
	\max_{N_k<m \le 2N_k}\left|\sum_{n=N_k+1}^mc_{\sigma(n)}e^{2\pi i \sigma(n)x} \right|\ge \frac{T_{\sigma_k}(x-t_k, P_k)}{k\cdot \sqrt{w(2N_k)}}\ge \frac{\sqrt{\log N_k}}{k\cdot \sqrt{w(2N_k)}}>1.
	\end{equation*}
Thus we get that series \e{u19} is almost everywhere divergent. Combining this with \e{u20} we complete the proof.
\end{proof}

\section{Proof of \cor{C2}}

\begin{lemma}\label{L21}
	Let $f_k$, $k=1,2,\ldots,n$, be a sequence of complex valued functions on an interval $\Delta$. Then for any $\lambda>0$ we have inequality
	\begin{align}
	&\left|\left\{x\in \Delta: \Re\left(\max_{1\le m\le n}\sum_{k=1}^m\alpha f_k(x)\right)>\lambda/\sqrt 2 \right\}\right|\\
	&\qquad\qquad\qquad\ge \frac{1}{4}\cdot\left|\left\{x\in \Delta: \max_{1\le m\le n}\left|\sum_{k=1}^mf_k(x)\right|>\lambda \right\}\right|,\label{u26}
	\end{align}
	for some of four numbers $\alpha=e^{\frac{\pi ik}{2}}$, $k=0,1,2,3$.
\end{lemma}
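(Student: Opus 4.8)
The plan is to deduce \e{u26} from an elementary pointwise fact about complex numbers, combined with a pigeonhole over the four rotations $\alpha=e^{\pi i k/2}$, $k=0,1,2,3$. Throughout, write $S_m(x)=\sum_{k=1}^m f_k(x)$ for $1\le m\le n$ and set
\[
E=\left\{x\in\Delta:\ \max_{1\le m\le n}|S_m(x)|>\lambda\right\},
\]
which is measurable since the $f_k$ are; for each $x\in E$ fix an index $m(x)\in\{1,\dots,n\}$ with $|S_{m(x)}(x)|>\lambda$.

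The first ingredient is the geometric observation that for every $z\in\mathbb{C}$ there is some $\alpha\in\{e^{\pi i k/2}:\ k=0,1,2,3\}$ with $\Re(\alpha z)\ge|z|/\sqrt2$. Indeed, writing $z=|z|e^{i\varphi}$, the four angles $\varphi+\pi k/2$ are equally spaced around the circle with gap $\pi/2$, so one of them lies within distance $\pi/4$ of $0$ modulo $2\pi$, and for that $k$,
\[
\Re(\alpha z)=|z|\cos\!\left(\varphi+\tfrac{\pi k}{2}\right)\ge|z|\cos\!\left(\tfrac{\pi}{4}\right)=\frac{|z|}{\sqrt2}.
\]
Applying this with $z=S_{m(x)}(x)$ and using $|S_{m(x)}(x)|>\lambda$ strictly, I would assign to each $x\in E$ the first (in a fixed enumeration of the four fourth roots of unity) rotation $\alpha(x)\in\{1,i,-1,-i\}$ for which $\Re(\alpha(x)\,S_{m(x)}(x))>\lambda/\sqrt2$.

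Next comes the pigeonhole step. Partition $E=\bigcup_{k=0}^3 E_k$ with $E_k=\{x\in E:\ \alpha(x)=e^{\pi i k/2}\}$; these sets are measurable and pairwise disjoint, so $\sum_{k=0}^3|E_k|=|E|$ and hence $|E_{k_0}|\ge|E|/4$ for some $k_0$. Put $\alpha=e^{\pi i k_0/2}$. For every $x\in E_{k_0}$,
\[
\max_{1\le m\le n}\Re\!\left(\alpha\sum_{k=1}^m f_k(x)\right)\ \ge\ \Re\!\left(\alpha\,S_{m(x)}(x)\right)\ >\ \frac{\lambda}{\sqrt2},
\]
so $E_{k_0}$ is contained in the set on the left-hand side of \e{u26}. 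Taking measures gives
\[
\left|\left\{x\in\Delta:\ \max_{1\le m\le n}\Re\!\left(\alpha\sum_{k=1}^m f_k(x)\right)>\frac{\lambda}{\sqrt2}\right\}\right|\ \ge\ |E_{k_0}|\ \ge\ \frac14\,|E|,
\]
which is exactly the asserted inequality.

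There is no real obstacle in this argument. The only points requiring a little care are the choice of the geometric constant (the worst case is $\varphi$ lying exactly halfway between two consecutive fourth roots of unity, which produces the factor $\cos(\pi/4)=1/\sqrt2$ and, correspondingly, the loss of $1/4$ in measure) and the bookkeeping of strict inequalities, which causes no trouble because $|S_{m(x)}(x)|>\lambda$ strictly. One should also note that the quantity inside $\Re(\cdot)$ in \e{u26} is to be read as $\max_{1\le m\le n}\Re(\alpha S_m(x))$, i.e.\ the maximum over $m$ of the real parts of the rotated partial sums.
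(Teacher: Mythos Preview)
Your proof is correct and follows essentially the same route as the paper: the pointwise observation that for any complex $z$ one of the four rotations $\alpha\in\{1,i,-1,-i\}$ gives $\Re(\alpha z)\ge|z|/\sqrt2$, followed by a pigeonhole over the four resulting subsets of $E$. The paper phrases the pointwise step as ``$|S_m|>\lambda$ forces one of $\pm\Re S_m,\pm\Im S_m$ to exceed $\lambda/\sqrt2$'', which is the same fact; your write-up is simply more detailed (and your remark clarifying that $\Re(\max_m\cdots)$ is to be read as $\max_m\Re(\cdots)$ is a useful gloss on the paper's slightly informal notation).
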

\begin{proof}
	One can check that
	\begin{equation*}
	\max_{1\le m\le n}\left|\sum_{k=1}^mf_k(x)\right|>\lambda
	\end{equation*}
	yields at least one of the following four inequalities
	\begin{align*}
	\Re\left(\max_{1\le m\le n}\sum_{k=1}^m f_k(x)\right)>\lambda/\sqrt 2,\\
	\Re\left(\max_{1\le m\le n}\sum_{k=1}^m f_k(x)\right)<-\lambda/\sqrt 2,\\
	\Im\left(\max_{1\le m\le n}\sum_{k=1}^m  f_k(x)\right)>\lambda/\sqrt 2,\\
	\Im\left(\max_{1\le m\le n}\sum_{k=1}^m  f_k(x)\right)<-\lambda/\sqrt 2.
	\end{align*}
This immediately gives \e{u25} for some $\alpha$.
\end{proof}
The spectrum of a trigonometric polynomial
\begin{equation*}
U(x)=\sum_{k=m}^na_k e^{2\pi ikx}
\end{equation*}
will be denoted by $\spec(U)=\{k:\, a_k\neq 0\}$.
\begin{lemma}\label{L13}
	Let $\Delta\subset \ZT$ be an arbitrary interval and the integer $N>N_0$ satisfy 
	\begin{equation}\label{u42}
	\frac{1}{\sqrt N}\le |\Delta| 
	\end{equation}Then for any integer $l$ there exists a sequence of non-overlapping trigonometric polynomial $U_n$, $n=1,2,\ldots,N$, such that 
	\begin{align}
	&\spec(U_n)\subset (l,l+N^5],\\
	&\left\|\sum_{n=1}^NU_n\right\|_{L^2(\ZT)}\lesssim\sqrt{ |\Delta|},\\
	&\sum_{n=1}^N|U_n(x)|\lesssim 1/N,\quad x\in \ZT\setminus \Delta,\\
	&\left|\left\{x\in \Delta:\max_{1\le m\le N}\Re\left(\sum_{n=1}^mU_n(x)\right)>\sqrt {\log N}\right\}\right|\gtrsim |\Delta|.\label{u24}
	\end{align}
\end{lemma}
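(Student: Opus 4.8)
The plan is to start from the global bad example for the trigonometric system furnished by \lem{L12}, split it into its single frequencies, and then \emph{dilate, window and modulate} it so that it becomes supported essentially on $\Delta$ and has spectrum inside the prescribed window $(l,l+N^5]$; the real part in \e{u24} will only be recovered at the very last step, through \lem{L21}, so that the intervening modulations do no harm.

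First I would invoke \lem{L12} with the parameter $N$ to produce a polynomial $P(x)=\sum_{\nu=1}^{N}b_\nu e^{2\pi i\nu x}$ with $\|P\|_2\sim1$ and a permutation $\tau\in\Sigma_N$ such that $|\{x\in\ZT:\ T_{\tau,N}(x,P)>3\sqrt{\log N}\}|\gtrsim1$ (the constant $3$ is gained by passing from the polynomial supplied by \lem{L12} to $3$ times it). Next I would fix an integer dilation factor $K\sim N^3$ and a windowing kernel $\Psi$ — a suitable power of the Fej\'er kernel, or the one–dimensional analogue of the kernel used in Section~3 — which is a nonnegative trigonometric polynomial of degree $L\sim N^3$ with $\int_\ZT\Psi=1$ and rapid decay $\Psi(t)\lesssim L(1+L\|t\|)^{-4}$; the arithmetic point is $L<K$. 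Writing $\Delta^\circ\subset\Delta$ for the concentric subinterval with $|\Delta\setminus\Delta^\circ|\sim N^{-2}$, putting $\chi=\Psi\ast\ZI_{\Delta^\circ}$ (a trigonometric polynomial with $0\le\chi\le1$ and $\deg\chi\le L$), and choosing an integer $\mu$ with $[\,\mu+K-L,\ \mu+NK+L\,]\subset(l,l+N^5]$ (possible since this interval has length $(N-1)K+2L\approx10N^4<N^5$ for $N>N_0$), I would set
\[
W_n(x)=e^{2\pi i\mu x}\,\chi(x)\,b_{\tau(n)}\,e^{2\pi iK\tau(n)x},\qquad n=1,\dots,N,
\]
and take the required blocks to be $U_n=\alpha W_n$, where $\alpha$ is the unimodular constant produced by \lem{L21} applied to $\{W_n\}$ on $\Delta$ with $\lambda=\sqrt2\,\sqrt{\log N}$.

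The verification then breaks into short pieces. For non‑overlapping: $\spec W_n$ is contained in the interval of radius $L$ about $\mu+K\tau(n)$, and distinct values $K\tau(n)$ differ by at least $K>2L$. The inclusion $\spec U_n\subset(l,l+N^5]$ is the choice of $\mu$. For the $L^2$ bound, $\|\sum_n U_n\|_2^2=\int_\ZT\chi(x)^2|P(Kx)|^2dx\le\int_\ZT\chi(x)|P(Kx)|^2dx$; since the function $x\mapsto|P(Kx)|^2$ has spectrum in $K\ZZ$ and $\deg\Psi=L<K$, its convolution with $\Psi$ is the constant $\|P\|_2^2$, so the last integral equals $|\Delta^\circ|\,\|P\|_2^2\lesssim|\Delta|$. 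For the pointwise bound off $\Delta$, $\sum_n|U_n(x)|=\chi(x)\sum_\nu|b_\nu|\le\chi(x)\sqrt N\,\|P\|_2$, and the decay of $\Psi$ gives $\chi(x)\lesssim(LN^{-2})^{-3}=N^{-3}$ for $x\notin\Delta$, whence $\sum_n|U_n(x)|\lesssim N^{-5/2}\le1/N$. Finally $\max_{m}|\sum_{n\le m}W_n(x)|=\chi(x)\,T_{\tau,N}(Kx,P)$; on the concentric subinterval where $\chi\ge\tfrac12$, still of measure $\gtrsim|\Delta|$ because $|\Delta|\ge N^{-1/2}\gg N^{-2}$, this product exceeds $\tfrac12\cdot 3\sqrt{\log N}>\sqrt2\sqrt{\log N}$ on the set $\{x:Kx\in E+\ZZ\}$, $E=\{y:T_{\tau,N}(y,P)>3\sqrt{\log N}\}$, which occupies a proportion $|E|\gtrsim1$ of every period $1/K\ll|\Delta|$ and therefore meets $\Delta$ in measure $\gtrsim|\Delta|$. \lem{L21} turns this modulus bound, for the appropriate $\alpha$, into \e{u24} at height $\sqrt2\sqrt{\log N}/\sqrt2=\sqrt{\log N}$, losing only the harmless factor $1/4$.

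The one genuinely delicate issue — and the reason for dilating by a large $K$ rather than naively rescaling by $1/|\Delta|$ — is reconciling two opposite demands inside the bandwidth budget $N^5$: the window $\chi$ must have \emph{large} degree $L\sim N^3$ so that it is already $\lesssim1/N$ on an $N^{-2}$‑collar outside $\Delta$, yet after multiplication by $\chi$ the single frequencies must remain \emph{non‑overlapping}, which forces their gaps to exceed $2L$. Dilating the whole example by $K\sim N^3\gg L$ secures both at once, makes the $L^2$ computation collapse (exactly because $L<K$), and raises the top frequency only to $\sim NK\approx10N^4<N^5$. Apart from this balancing, the only constant worth tracking is $\tfrac12\cdot 3>\sqrt2$, i.e. that halving the kernel still leaves enough height to feed \lem{L21}.
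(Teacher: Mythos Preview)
Your argument is correct and follows essentially the same route as the paper: start from \lem{L12}, dilate the frequencies by a factor $\sim N^3$, multiply by a Fej\'er-type window $\chi=\Psi\ast\ZI_{\Delta^\circ}$ to localize to $\Delta$, modulate into the band $(l,l+N^5]$, and finish with \lem{L21}. The only cosmetic differences are that the paper uses the plain Fej\'er kernel with a wider collar ($\sim N^{-1/2}$) rather than a higher-power kernel with your $N^{-2}$ collar, and estimates $\|\sum U_n\|_2$ by direct periodicity rather than your (slightly slicker) spectral observation that $\deg\Psi<K$ annihilates the nonzero frequencies of $|P(K\cdot)|^2$.
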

\begin{proof}
Suppose $\Delta=[a,b]$. In view of \e{u42} we consider the polynomial
\begin{equation}
R(x)=R_N(x)=\frac{1}{\pi}\int_{a+1/4\sqrt{N}}^{b-1/4\sqrt{N}} K_{[N^3/3]}(x-t)dt=\sum_{k=-[N^3/3]}^{[N^3/3]}c_ke^{2\pi ikx},
\end{equation}	
where $K_{n}$ is the Fej\'{e}r kernel of order $n$. Standard properties of the Fej\'{e}r kernel implies
\begin{align}
&0\le R(x)\lesssim 1/N^2,\quad x\in \ZT\setminus \Delta,\label{u43}\\
&1\ge R(x)\ge 1/2,\quad x\in \tilde \Delta=[a+1/3\sqrt{N},b-1/3\sqrt{N}],\label{u44}\\
&1\ge R(x)\ge 0,\quad x\in \Delta\setminus \tilde \Delta,
\end{align}
where \e{u44} will be satisfied for bigger enough $N$. Applying \lem{L12}, we find a polynomial
\begin{align*}
P(x)=\sum_{n=1}^Nb_ne^{2\pi inx},
\end{align*}
satisfying the conditions of lemma. In light of \e{u42}, from \e{u23} it easily follows that
\begin{equation}\label{u56}
\left|\left\{x\in \tilde \Delta:\, \max_{1\le m\le N}\left|\sum_{n=1}^mb_{\sigma(n)}e^{2\pi i\sigma(n)N^3x}\right|>\sqrt {\log N}\right\}\right|\gtrsim |\Delta|,
\end{equation}
where $\sigma$ is the permutation from \e{u23}. Consider the polynomials
\begin{equation}\label{u21}
p_n(x)=4e^{2\pi ilx}R(x)\cdot b_{\sigma(n)}e^{2\pi i\sigma(n)N^3x},\quad n=1,2,\ldots, N, 
\end{equation}
whose spectrums are in $(l,l+N^5]$ and non-overlapping. Using \e{u43} and \e{u44}, we conclude
\begin{equation*}
\left\|\sum_{n=1}^Np_n\right\|_{L^2(\ZT)}\lesssim \left\|P(N^3x)\right\|_{L^2(\Delta)}+\frac{1}{N^2}\lesssim\|P\|_2\cdot \sqrt{ |\Delta|}\lesssim \sqrt{ |\Delta|}.
\end{equation*}
Using \e{u43} and $\sum_{n=1}^N|b_n|\le \sqrt{N}\cdot \|P\|_2\lesssim \sqrt{N}$, we get
\begin{align*}
\sum_{n=1}^N|p_n(x)|\lesssim \sqrt{N} |R(x)|\lesssim \frac{1}{N}\text { for all }x\in \ZT\setminus \Delta.
\end{align*}
Then we have
\begin{equation*}
\max_{1\le m\le N}\left|\sum_{n=1}^mp_n(x)\right|=4R(x)\max_m\left|\sum_{n=1}^mb_{\sigma(n)}e^{2\pi i\sigma(n)N^3x}\right|,
\end{equation*}
and therefore by \e{u44} and \e{u56} we get
\begin{align}
&\left|\left\{x\in \Delta:\,\max_{1\le m\le N}\left|\sum_{n=1}^mp_n(x)\right|>2\sqrt {\log N}\right\}\right|\\
&\qquad\ge \left|\left\{x\in \tilde \Delta:\, \max_{1\le m\le N}\left|\sum_{n=1}^mb_{\sigma(n)}e^{2\pi i\sigma(n)N^3x}\right|>\sqrt {\log N}\right\}\right|\\
&\qquad\gtrsim |\Delta|.
\end{align}
According to \lem{L21} as a desired sequence can serve the polynomials $U_n(x)=e^{\frac{\pi ik}{2}}\cdot p_n(x)$ with an appropriate choice of $k=0,1,2,3$. Clearly they satisfy the conditions of the lemma.
\end{proof}
In the rest of the paper we will consider the sequence
\begin{equation}\label{u51}
\nu_0=1,\quad \nu_k=2^{50^k},\quad k=1,2,\ldots.
\end{equation}
\begin{lemma}\label{L20}
	If an increasing sequence of numbers $w(n)$ satisfies \e{4}, then there exists a set of integers $G\subset \ZN$ such that 
		\begin{align}
	&w(\nu_{k+1})<100w(\nu_{k}),\quad k\in G,\label{u48}\\
	&\sum_{k\in G} \frac{50^k}{w(\nu_k)}=\infty,\label{u49}
	\end{align}
	where $\nu_k$ is the sequence \e{u51}.
\end{lemma}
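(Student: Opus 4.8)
The plan is to phrase everything through the single sequence $a_k:=50^k/w(\nu_k)$, to prove first that $\sum_{k\ge0}a_k=\infty$, and then to show that the indices at which $w$ more than hundredfolds between $\nu_k$ and $\nu_{k+1}$ cannot by themselves carry this divergence; the complementary set of indices can then be taken for $G$.

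The first step is a dyadic-block comparison with \e{4}. Splitting $\sum_{n\ge1}\tfrac1{nw(n)}$ into the blocks $[\nu_k,\nu_{k+1})$ and using that $w$ is non-decreasing, one has $w(n)\ge w(\nu_k)$ on the $k$-th block, while $\sum_{\nu_k\le n<\nu_{k+1}}\tfrac1n\le 1+\log(\nu_{k+1}/\nu_k)\lesssim 50^k$ by \e{u51}. Hence the $k$-th block contributes $\lesssim 50^k/w(\nu_k)=a_k$, and summing over $k\ge0$ gives $\sum_k a_k\gtrsim\sum_n\tfrac1{nw(n)}=\infty$.

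The second step identifies $G$. Since $a_{k+1}/a_k=50\,w(\nu_k)/w(\nu_{k+1})$, the inequality \e{u48}, i.e. $w(\nu_{k+1})<100\,w(\nu_k)$, is precisely $a_{k+1}>a_k/2$. So I would set $G:=\{k:\ w(\nu_{k+1})<100\,w(\nu_k)\}$ and $B:=\ZN\setminus G$, so that $k\in B$ iff $a_{k+1}\le a_k/2$. Decompose $B$ into maximal runs of consecutive integers. Along a run starting at $s$ one has $a_{s+j}\le 2^{-j}a_s$, so that run contributes less than $2a_s$ to $\sum_{k\in B}a_k$; and if $s\ge1$ then $s-1\in G$, while $a_s\le 50\,a_{s-1}$ because $w$ is non-decreasing, and distinct runs have distinct predecessors $s-1$. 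Consequently
\[
\sum_{k\in B}a_k< 2a_0+100\sum_{j\in G}a_j=\frac{2}{w(1)}+100\sum_{j\in G}a_j .
\]
If $\sum_{j\in G}a_j$ were finite, this together with the first step would force $\sum_k a_k<\infty$, contradicting step one; hence $\sum_{k\in G}\tfrac{50^k}{w(\nu_k)}=\infty$, which is \e{u49}, while \e{u48} holds by the construction of $G$.

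I do not expect a genuine obstacle; the one place that needs care is the bookkeeping in the last step — that a maximal run of ``bad'' indices is geometrically summable in terms of its first term, that this first term is at most $50$ times the necessarily-``good'' term immediately preceding the run, and that the map run $\mapsto$ predecessor is injective, so the whole of $\sum_{k\in B}a_k$ is absorbed into $100\sum_{j\in G}a_j$ up to the harmless additive constant $2/w(1)$.
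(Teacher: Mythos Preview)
Your proof is correct and follows essentially the same route as the paper: first reduce \e{4} to the divergence of $\sum_k 50^k/w(\nu_k)$ by blocking over $[\nu_k,\nu_{k+1})$, then take $G=\{k:w(\nu_{k+1})<100\,w(\nu_k)\}$, split the complement $B$ into maximal runs, use the geometric decay $a_{k+1}\le a_k/2$ along each run, and charge each run to its immediate predecessor in $G$. The paper presents this last step via a case distinction (either $\sum_{B}$ is finite and we are done, or it is infinite and then $\sum_G\ge\tfrac1{50}\sum_B=\infty$), whereas you go directly to the single inequality $\sum_B\le 2a_0+100\sum_G$; these are the same argument, and your handling of the possible initial run with the additive constant $2/w(1)$ is a clean touch.
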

\begin{proof}
	First, observe that from \e{4} it follows that
	\begin{equation}\label{u57}
	\sum_{k=1}^\infty \frac{50^k}{w(\nu_k)}=\infty.
	\end{equation}
	Let $G$ be the set of integers $k$ satisfying \e{u48}. If 
	\begin{equation}\label{u50}
	\sum_{k\in \ZN\setminus G} \frac{50^k}{w(\nu_k)}<\infty,
	\end{equation}
	then \e{u49}  immediately follows from \e{u57} and lemma will be proved. So we can suppose that the series in \e{u50} is divergent. Clearly $G$ is infinite and the set $\ZN\setminus G$ can be written in the form
	\begin{equation*}
	\ZN\setminus G=\bigcup_{j} \{m_{2j}+1,m_{2j}+2,\ldots,m_{2j+1}\}
	\end{equation*}
	where $m_{2j}\in G$ for any $j=1,2,\ldots$. We have 
	\begin{equation*}
	w(\nu_{k+1})\ge 100w(\nu_{k})\text { for all }k=m_{2j}+1,m_{2j}+2,\ldots,m_{2j+1},
	\end{equation*}
	that implies 
	\begin{equation*}
	\sum_{k=m_{2j}+1}^{m_{2j+1}} \frac{50^k}{w(\nu_k)}\le  \frac{50^{m_{2j}+1}}{w(\nu_{m_{2j}+1})} \left(\frac{1}{2}+\frac{1}{2^2}+\ldots\right)\le\frac{50^{m_{2j}+1}}{w(\nu_{m_{2j}})}.
	\end{equation*}
	Thus we get
	\begin{equation*}
	\sum_{k\in G} \frac{50^k}{w(\nu_k)}\ge \sum_{j=1}^\infty  \frac{50^{m_{2j}}}{w(\nu_{m_{2j}})}\ge \frac{1}{50}\sum_{k\in \ZN\setminus G} \frac{50^k}{w(\nu_k)} =\infty.
	\end{equation*}
\end{proof}
The following lemma  one can find in \cite{KaSa} ch. 9, in the proof of Theorem 6.
\begin{lemma}\label{L15}
If $E_k\subset (0,1)$ are stochastically independent sets such that $|E_k|>c>0$ and the sequence $b_k>0$ satisfies $\sum_{k=1}^\infty b_k=\infty$,  then 
\begin{equation}\label{u80}
\sum_{k=1}^\infty b_k\ZI_{E_k}(x)=\infty \text{ almost everywhere.}
\end{equation}
\end{lemma}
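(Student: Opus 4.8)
The plan is to reduce \e{u80} to the second (divergence) Borel--Cantelli lemma after a suitable grouping of the indices. Throughout, $|\cdot|$ denotes Lebesgue measure on $(0,1)$, viewed as a probability measure on which the stochastic independence of the $E_k$ is understood; we may assume $0<c<1$.

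First I would partition $\ZN$ into consecutive blocks of indices. Set $n_0=0$ and inductively $n_j=\min\{n>n_{j-1}:\ \sum_{k=n_{j-1}+1}^{n}b_k\ge 1\}$, $B_j=\{n_{j-1}+1,\dots,n_j\}$; each $n_j$ exists because the tail $\sum_{k>n_{j-1}}b_k$ diverges, so the $B_j$ partition $\ZN$. Put $s_j=\sum_{k\in B_j}b_k$ (so $s_j\ge 1$) and $X_j(x)=\sum_{k\in B_j}b_k\ZI_{E_k}(x)$, so that $\sum_k b_k\ZI_{E_k}=\sum_j X_j$. By minimality of $n_j$ we have $\sum_{k=n_{j-1}+1}^{n_j-1}b_k<1$, hence $s_j<1+b_{n_j}$; in particular, if $s_j\ge 2$ then $b_{n_j}>1$.

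Next I would establish the uniform estimate $\big|\{X_j\ge c/2\}\big|\ge c/4$ for every $j$. If $s_j<2$, then $0\le X_j\le s_j$ while $\int_0^1 X_j=\sum_{k\in B_j}b_k|E_k|\ge c\,s_j\ge c$, so
\begin{equation*}
c\le \int_0^1 X_j\le \tfrac{c}{2}+2\,\big|\{X_j\ge c/2\}\big|,
\end{equation*}
and the claim follows. If $s_j\ge 2$, then $b_{n_j}>1$ by the previous paragraph, so $X_j\ge b_{n_j}\ZI_{E_{n_j}}>1>c/2$ on $E_{n_j}$, whence $\big|\{X_j\ge c/2\}\big|\ge|E_{n_j}|>c$.

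Finally, since the blocks are pairwise disjoint and $\{E_k\}$ is an independent family, the standard grouping property of independence shows that the events $A_j=\{X_j\ge c/2\}$, each determined by $\{E_k:k\in B_j\}$, are mutually independent; and $\sum_j|A_j|\ge\sum_j c/4=\infty$. By the second Borel--Cantelli lemma almost every $x$ belongs to $A_j$ for infinitely many $j$, and for such $x$ we obtain $\sum_k b_k\ZI_{E_k}(x)=\sum_j X_j(x)\ge\sum_{j:\,x\in A_j}c/2=\infty$, which is \e{u80}. I do not expect a genuine obstacle here: the only points needing care are that the blocks are well defined and exhaust $\ZN$ (this is exactly where $\sum b_k=\infty$ enters) and the dichotomy $s_j<2$ versus $s_j\ge 2$ that keeps the lower bound on $|A_j|$ uniform; everything else is the classical divergence Borel--Cantelli argument.
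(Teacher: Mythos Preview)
Your proof is correct. The blocking is well defined because the tails of $\sum b_k$ diverge, the dichotomy in step two handles both the case of a bounded block sum (Markov-type estimate using $X_j\le s_j<2$ and $\int X_j\ge c$) and the case of a single dominating term, and the independence of the $A_j$ follows from the disjointness of the blocks together with the grouping property of independent families; the second Borel--Cantelli lemma then finishes the argument.

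Your route, however, is genuinely different from the paper's. The paper does not block the indices at all: it first passes to a minorizing sequence $0<c_k\le b_k$ with $\sum c_k=\infty$ and $\sum c_k^2<\infty$, writes $\ZI_{E_k}=\phi_k+|E_k|$ with $\phi_k=\ZI_{E_k}-|E_k|$, and then invokes the a.e.\ convergence theorem for series $\sum c_k\phi_k$ in stochastically independent orthonormal-type systems (cited from \cite{KaSa}). Since $\sum c_k|E_k|\ge c\sum c_k=\infty$, the series $\sum c_k\ZI_{E_k}$ diverges a.e., and the conclusion for $b_k$ follows by comparison. Your argument is more elementary and entirely self-contained, resting only on the second Borel--Cantelli lemma; the paper's argument is shorter but imports a nontrivial convergence theorem from the theory of independent orthogonal series.
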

\begin{proof}
	Let $0<c_k\le b_k$ satisfies $\sum_kc_k=\infty$ and $\sum_kc_k^2<\infty$. Observe that $\phi_k(x)=\ZI_{E_k}(x)-|E_k|$ form a stochastically independent system of orthogonal functions. It is well known that any series 
	\begin{equation*}
	\sum_{k=1}^\infty c_k\phi_k(x) \text{ with  }\sum_kc_k^2<\infty
	\end{equation*}
in such systems is almost everywhere convergent (see \cite {KaSa}, ch. 2, Theorem 9). Combining this with the relation  $\sum_{k=1}^\infty c_k|E_k|=\infty$, we get a.e. divergence of $\sum_{k=1}^\infty c_k\ZI_{E_k}(x)$ and so \e{u80}.
\end{proof}
\begin{lemma}\label{L16}
If $P\in \ZP_N$ is a polynomial of degree $N$ and $\Delta\subset \ZT$ is an interval, then
\begin{equation}
\OSC_\Delta(P)=\sup_{x,y\in \Delta}|P(x)-P(y)|\lesssim N^{3/2}|\Delta|\cdot \|P\|_2.
\end{equation}
\end{lemma}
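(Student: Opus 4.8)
The plan is to pass from the oscillation of $P$ to the supremum norm of its derivative and then to apply a Bernstein-type bound made cheap by the fact that the spectrum of $P$ consists of only $N$ frequencies. Concretely, I would lift $\Delta$ to an interval of the same length in $\ZR$ and extend $P$ by periodicity; then for $x,y\in\Delta$ the fundamental theorem of calculus gives
\[
|P(x)-P(y)|=\left|\int_y^xP'(t)\,dt\right|\le |\Delta|\cdot\|P'\|_\infty ,
\]
so it suffices to show $\|P'\|_\infty\lesssim N^{3/2}\|P\|_2$.

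Writing $P(x)=\sum_{k=1}^Na_ke^{2\pi ikx}$ we have $P'(x)=2\pi i\sum_{k=1}^Nka_ke^{2\pi ikx}$, and hence by the triangle inequality, then Cauchy--Schwarz (using that there are only $N$ nonzero terms), then Parseval,
\[
\|P'\|_\infty\le 2\pi\sum_{k=1}^Nk|a_k|\le 2\pi N\sum_{k=1}^N|a_k|\le 2\pi N\sqrt N\left(\sum_{k=1}^N|a_k|^2\right)^{1/2}=2\pi N^{3/2}\|P\|_2 .
\]
Combining the two displays yields $\OSC_\Delta(P)\le 2\pi N^{3/2}|\Delta|\,\|P\|_2$, which is the asserted bound.

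There is no real obstacle here: the factor $N$ is the differentiation (Bernstein) gain and the factor $\sqrt N$ is a Nikolskii-type $L^2\to L^\infty$ embedding for polynomials supported on $N$ frequencies, both obtained for free from Cauchy--Schwarz on the Fourier side. If one preferred not to touch the coefficients, the same estimate follows from the classical Bernstein inequality $\|P'\|_\infty\le 2\pi N\|P\|_\infty$ combined with $\|P\|_\infty\le\sqrt N\,\|P\|_2$, but the coefficient computation above is already the shortest route.
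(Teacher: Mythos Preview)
Your argument is correct and essentially identical to the paper's: both bounds amount to Cauchy--Schwarz over the $N$ Fourier coefficients combined with the Lipschitz estimate $|e^{2\pi ikx}-e^{2\pi iky}|\le 2\pi k|x-y|\le 2\pi N|\Delta|$. The only cosmetic difference is that the paper applies Cauchy--Schwarz directly to $P(x)-P(y)=\sum_k a_k(e^{2\pi ikx}-e^{2\pi iky})$ rather than passing through $\|P'\|_\infty$, but this is the same computation in a different order.
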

\begin{proof}
	Suppose 
	\begin{equation*}
	P(x)=\sum_{k=1}^Na_ke^{2\pi ikx}.
	\end{equation*}
	Applying the H\"{o}lder inequality, for $x,y\in \Delta$ we get
	\begin{align*}
	|P(x)-P(y)|&\le \left(\sum_{k=1}^N|a_k|^2\right)^{1/2}\cdot \left(\sum_{k=1}^N|e^{2\pi kx}-e^{2\pi ky}|^2\right)^{1/2}\\
	&\lesssim N^{3/2}|\Delta|\cdot \|P\|_2.
	\end{align*}
\end{proof}
\begin{proof}[Proof of \cor{C2}]
	Applying \lem{L20}, we find a set of indexes $G\subset \ZN$ satisfying \e{u48} and \e{u49}.
For the sake of simplicity and without loss of generality we can suppose that $G=\ZN$. Indeed considering the general case of $G=\{r_k\}$ one just need to replace everywhere the summation $\sum_{k=1}^\infty $ by $\sum_{k\in G}$ and $\sum_{k=k_0}^\infty $ by $\sum_{k\in G\cap [k_0,\infty)}$. In some places it will also appear some indexation containing the integers $r_k$. So we suppose $G=\ZN$. Clearly there is a sequence of positive numbers $q_k\nearrow\infty$ such that
	\begin{align}
	&\frac{50^k}{q_kw(\nu_k)}\le 1,\label{u70}\\
	&\sum_{k=1}^\infty \frac{50^k}{q_kw(\nu_k)}=\infty,\quad \label{u45}\\
	&\sum_{k=1}^\infty \frac{50^k}{q_k^2w(\nu_k)}<\infty.\label{u54}
	\end{align}
Consider the intervals
	\begin{equation}
	\Delta_{k,j}=\left[\frac{j-1}{\nu_k},\frac{j}{\nu_k}\right),\quad 1\le j\le \nu_k,\quad k=1,2,\ldots.
	\end{equation}
Applying \lem{L13} with $N=(\nu_k)^2$ and $\Delta=\Delta_{k,j}$, $k\ge k_0$, we find a sequence of non-overlapping polynomials $U_{k,j,n}(x)$, $n=1,2,\ldots ,(\nu_k)^2$, such that 
\begin{align}
&\spec(U_{k,j,n})\subset (j\cdot (\nu_k)^{10},(j+1)\cdot (\nu_k)^{10}],\label{u55}\\
&\sum_{n=1}^{(\nu_k)^2}\left\|U_{k,j,n}\right\|_{L^2(\ZT)}^2\lesssim |\Delta_{k,j}|=\frac{1}{\nu_k},\label{u53}\\
&\sum_{n=1}^{(\nu_k)^2}\left|U_{k,j,n}(x)\right|\lesssim \frac{1}{(\nu_k)^2},\quad x\in \ZT\setminus \Delta_{k,j},\label{u30}\\
&\left|\left\{x\in \Delta_{k,j}:\, \max_{1\le m\le(\nu_k)^2}\Re\left(\sum_{n=1}^mU_{k,j,n}(x)\right)>\sqrt {50^k}\right\}\right|\gtrsim | \Delta_{k,j}|=\frac{1}{\nu_k}.\label{u58}
\end{align}
Observe that if
\begin{equation}\label{u35}
\Delta_{k+1,i}\cap \left\{x\in \Delta_{k,j}:\, \max_{1\le m\le(\nu_k)^2}\Re\left(\sum_{n=1}^mU_{k,j,n}(x)\right)>\sqrt {50^k}\right\}\neq\varnothing,
\end{equation}
then 
one can find an integer $m=m(k+1,j)$ with $1\le m\le(\nu_k)^{2}$ such that 
\begin{equation}\label{u34}
\Re\left(\sum_{n=1}^{m(k+1,j)}U_{k,j,n}(x)\right)>\frac{\sqrt {50^k}}{2} \text { for all }x\in \Delta_{k+1,i},
\end{equation}
since by \e{u55} any sum $\sum_{n=1}^{m}U_{k,j,n}$ is a polynomial of degree at most $(\nu_k)^{15}$ and using \lem{L16},  its oscillation on $\Delta_{k+1,i}$ can be roughly estimated by
\begin{equation}
\left\|\sum_{n=1}^{m}U_{k,j,n}\right\|_2\cdot (\nu_k)^{45/2}\cdot |\Delta_{k+1,i}|\le 1.
\end{equation}
This and \e{u35} immediately imply \e{u34}. From \e{u58} it follows that the measure of the union of all the intervals $\Delta_{k+1,i}$ satisfying \e{u35} has a lower bound $c|\Delta_{k,j}|$, where $0<c<1$ is an absolute constant.  Thus one can determine a set $E_{k,j}(\subset \Delta_{k,j})$, which is a union of some intervals $\Delta_{k+1,i}$ satisfying \e{u35} and
\begin{align}
|E_{k,j}|=d_k|\Delta_{k,j}|,\quad 0<c_1<d_k<1,\label{u36}
\end{align}
where $c_1$ is another absolute constant, while the constant $d_k$ is common for all the indexes $j=1,2,\ldots,\nu_k$.
Thus, the sets
\begin{equation*}
E_k=\bigcup_{1\le j\le \nu_k}E_{k,j},\quad k\ge k_0,
\end{equation*}
are stochastically independent and applying \lem{L15} we get 
\begin{equation}\label{u81}
\sum_{k=k_0}^\infty  \frac{50^k}{q_kw(\nu_k)}\ZI_{E_k}(x)=\infty \text { a.e.}
\end{equation} 
Using this one can choose an increasing sequence of integers $k_0<k_1<k_2<\ldots $ such that
\begin{equation}\label{u39}
\left|\left\{x\in \ZT:\, \sum_{k=k_s+1}^{k_{s+1}} \frac{50^k}{q_kw(\nu_k)}\ZI_{E_k}(x)>s\right\}\right|>1-\frac{1}{s}.
\end{equation}
Hence for almost every $x\in \ZT$ the relation 
\begin{equation}\label{u41}
\sum_{k=k_s+1}^{k_{s+1}} \frac{50^k}{q_kw(\nu_k)}\ZI_{E_k}(x)>s
\end{equation}
holds for infinitely many $s$. 
Our desired trigonometric series is
\begin{equation}\label{u27}
\sum_{k=k_0}^\infty  \frac{\sqrt{50^k}}{q_kw(\nu_k)}\sum_{j=1}^{\nu_k}\sum_{n=1}^{(\nu_k)^2}U_{k,j,n}(x),
\end{equation}
where each $U_{k,j,n}$ is considered in its trigonometric form. Note that some of the coefficients of the mentioned trigonometric series are zeros. Let us show that the coefficients of this series satisfy condition \e{a3}. Indeed, in light of \e{u48} and \e{u55} we have $w(s)\le 100w(\nu_k)$ for any $s\in \spec (U_{k,j,n})\subset (\nu_k,\nu_{k+1}]$. Thus \e{a3} may be easily deduced from \e{u53}, \e{u54} and the bound
\begin{equation*}
\sum_{k=k_0}^\infty\left(\frac{\sqrt{50^k}}{q_kw(\nu_k)}\right)^2\cdot w(\nu_k)\sum_{j=1}^{\nu_k}\sum_{n=1}^{(\nu_k)^2}\|U_{k,j,n}\|_2^2\lesssim\sum_{k=k_0}^\infty \frac{50^k}{q_k^2w(\nu_k)}<\infty.
\end{equation*}
We construct the appropriate rearrangement of series \e{u27} as follows. The collections of the trigonometric terms of our series \e{u27} involved in the groups 
\begin{equation}\label{u31}
U_{k,j,n}, \quad k_s<k\le k_{s+1},\quad 1\le j\le \nu_k,\quad 1\le n\le (\nu_k)^2
\end{equation} 
will be located in the increasing order with respect to the $s$. We just need to determine the location of each polynomial $U_{k,j,n}$ inside of the group. We do it by induction with respect to the index $k$ in \e{u31}. We leave the first group of the polynomials 
\begin{equation*}
\{U_{k_s+1,j,n}:\, 1\le j\le \nu_{k_s+1},\quad 1\le n\le (\nu_{k_s+1})^2\}
\end{equation*}
in their original order. Then suppose we have already rearranged all the polynomials $U_{k,j,n}$ corresponding to indexes $k=k_s+1,k_s+2,\ldots ,l-1$, so that the polynomials $U_{l-1,j,n}$, $n=1,2,\ldots, (\nu_{l-1})^2$ are located consecutively. Let us describe the procedure how to locate the polynomials of the next collection $\{U_{l,j,n}:\,1\le j\le \nu_{l},\, 1\le n\le (\nu_{l})^2\}$. Denote by $\Delta_{l-1,\bar j}$ the unique $(l-1)$-order interval containing the given interval $\Delta_{l,j}$ of order $l$. The following two cases are possible:

1) If $\Delta_{l,j}\subset \Delta_{l-1,\bar j}\setminus E_{l-1,\bar j}$, then we locate the polynomials $U_{l,j,n}$, $n=1,2,\ldots ,(\nu_{l})^2$, immediately after the polynomial $ U_{l-1,\bar j, (\nu_{l-1})^2}$. 

2) If $\Delta_{l,j}\subset E_{l-1,\bar j}$, then by the definition of $E_{l-1,\bar j}$ and by \e{u35}, \e{u34} for some $m=m(l,j)$ we have
\begin{equation}\label{u28}
\Re\left(\sum_{n=1}^{m(l,j)}U_{l-1,\bar j,n}(x)\right)>\sqrt{50^{l-1}}/2,\quad x\in \Delta_{l,j}.
\end{equation}
In this case we locate the polynomials $U_{l,j,n}$, $n=1,2,\ldots,(\nu_l)^2$, immediately after the $U_{l-1,\bar j,m}$. This completes the induction procedure and so the construction of the required rearrangement. It remains to prove the a.e. divergence of the series \e{u27} after the described rearrangement of the terms. For a given point $x\in \ZT$ there is a unique decreasing sequence of intervals $\Delta_{k,j_k(x)}$ containing $x$. Hence our series \e{u27} can be split into two subseries
\begin{equation}\label{u40}
\sum_{k=k_0}^\infty \frac{\sqrt{50^k}}{q_kw(\nu_k)}\sum_{n=1}^{(\nu_k)^2}U_{k,j_k(x),n}(x)+\sum_{k=k_0}^\infty \frac{\sqrt{50^k}}{q_kw(\nu_k)}\sum_{j=1}^{\nu_k}\sum_{n=1}^{(\nu_k)^2}U_{k,j,n}(x)\cdot \ZI_{\ZT\setminus\Delta_{k,j}}(x).
\end{equation}
From \e{u70} and \e{u30} it follows that
\begin{equation}\label{u47}
\sum_{k=k_0}^\infty \frac{\sqrt{50^k}}{q_kw(\nu_k)}\sum_{j=1}^{\nu_k}\sum_{n=1}^{(\nu_k)^2}|U_{k,j,n}(x)|\cdot \ZI_{\ZT\setminus\Delta_{k,j}}(x)\le \sum_{k=k_0}^\infty\frac{1}{\nu_k}<\infty.
\end{equation}
Thus we conclude that the second series in \e{u40} absolutely converges for any $x\in \ZT$. Our rearrangement of the basic series produces a rearrangement of the first subseries in \e{u40} and  it remains to prove for almost every $x\in \ZT$ such rearranged series diverges.
Denote 
\begin{align*}
A_s(x)&=\{k\in \ZN:\,  k_s<k\le k_{s+1},\,x\in E_{k,j_k(x)}\},\\
B_s(x)&=\{k\in \ZN:\, k_s<k\le k_{s+1},\, x\in \Delta_{k,j_k(x)}\setminus E_{k,j_k(x)}\}\\
&=\{k_s+1,k_s+2,\ldots,k_{s+1}\}\setminus A_s(x).
\end{align*}
According to the rearrangement construction, one can observe that there is a "restricted" partial sum (a sum of the form $\sum_{p}^q$ ) of the rearranged first  subseries of  \e{u40}, which contains all the sums of the forms
\begin{align}
&\frac{\sqrt{50^k}}{q_kw(\nu_k)}\sum_{n=1}^{m(k+1,j_{k+1}(x))}U_{k,j_k(x),n}(x),\quad k\in A_s(x),\label{u60}\\
&\frac{\sqrt{50^k}}{q_kw(\nu_k)}\sum_{n=1}^{(\nu_k)^2}U_{k,j_k(x),n}(x),\quad k\in B_s(x),\label{u38}
\end{align}
completely and there is no other terms in this partial sum. If \e{u41} holds, then according to \e{u28}, for the sum of the elements \e{u60} we obtain
\begin{align}
&\Re\left(\sum_{k\in A_s(x)}\frac{\sqrt{50^k}}{q_kw(\nu_k)}\sum_{n=1}^{m(k+1,j_{k+1}(x))}U_{k,j_k(x),n}(x)\right)\\
&\qquad\qquad\ge \frac{1}{2}\sum_{k=k_s+1}^{k_{s+1}}\frac{50^k}{q_kw(\nu_k)}\ZI_{E_k}(x)>\frac{s}{2}.\label{u59}
\end{align}
As for the elements \e{u38}, they form an a.e. absolutely convergence series. Indeed,  we have a pointwise bound
\begin{align}
\sum_{k=k_0}^\infty \frac{\sqrt{50^k}}{q_kw(\nu_k)}\left|\sum_{n=1}^{(\nu_k)^2}U_{k,j_k(x),n}(x)\right|&=\sum_{k=k_0}^\infty \frac{\sqrt{50^k}}{q_kw(\nu_k)}\left|\sum_{n=1}^{(\nu_k)^2}U_{k,j_k(x),n}(x)\right|\cdot \ZI_{\Delta_{k,j_k(x)}}(x)\\
&\le \sum_{k=k_0}^\infty \frac{\sqrt{50^k}}{q_kw(\nu_k)}\sum_{j=1}^{\nu_k}\left|\sum_{n=1}^{(\nu_k)^2}U_{k,j,n}(x)\right|\cdot \ZI_{\Delta_{k,j}}(x)\\
&= \sum_{k=k_0}^\infty R_k(x),\label{u71}
\end{align}
and then using \e{u70}, \e{u53} and an orthogonality argument, we obtain
\begin{align*}
\sum_{k=k_0}^\infty \|R_k\|_2&=\sum_{k=k_0}^\infty \frac{\sqrt{50^k}}{q_kw(\nu_k)}\left\|\sum_{j=1}^{\nu_k}\left|\sum_{n=1}^{(\nu_k)^2}U_{k,j,n}(x)\right|\cdot \ZI_{\Delta_{k,j}}(x)\right\|_2\\
&=\sum_{k=k_0}^\infty \frac{\sqrt{50^k}}{q_kw(\nu_k)}\left(\sum_{j=1}^{\nu_k}\left\|\sum_{n=1}^{(\nu_k)^2}U_{k,j,n}(x)\cdot \ZI_{\Delta_{k,j}}(x)\right\|_2^2\right)^{1/2}\\
&\le \sum_{k=k_0}^\infty \frac{\sqrt{50^k}}{q_kw(\nu_k)}\left(\sum_{j=1}^{\nu_k}\left\|\sum_{n=1}^{(\nu_k)^2}U_{k,j,n}\right\|_2^2\right)^{1/2}\\
&\lesssim \sum_{k=k_0}^\infty \frac{\sqrt{50^k}}{q_kw(\nu_k)}=\sum_{k=k_0}^\infty \frac{1}{\sqrt{50^k}}\cdot \frac{50^k}{q_kw(\nu_k)}\le \sum_{k=k_0}^\infty \frac{1}{\sqrt{50^k}}<\infty
\end{align*}
that implies the a.e. convergence of series \e{u71}. Combining \e{u59} with the a.e. absolutely convergence of the series consisting of the terms \e{u38}, we conclude that the first subseries in \e{u40} diverges for a.e. $x\in \ZT$. This completes the proof of \cor{C2}.
\end{proof}

\bibliographystyle{plain}

\begin{bibdiv}
\begin{biblist}
	\bib{Boch}{article}{
		author={Bo\v{c}karev, S. V.},
		title={Rearrangements of Fourier-Walsh series},
		language={Russian},
		journal={Izv. Akad. Nauk SSSR Ser. Mat.},
		volume={43},
		date={1979},
		number={5},
		pages={1025--1041, 1197},
		issn={0373-2436},
		review={\MR{552550}},
	}
\bib{Boch2}{article}{
	author={Bo\v{c}karev, S. V.},
	title={A majorant of the partial sums for a rearranged Walsh system},
	language={Russian},
	journal={Dokl. Akad. Nauk SSSR},
	volume={239},
	date={1978},
	number={3},
	pages={509--510},
	issn={0002-3264},
	review={\MR{0487239}},
}
\bib{Dem}{article}{
	author={Demeter, Ciprian},
	title={Singular integrals along $N$ directions in $\Bbb R^2$},
	journal={Proc. Amer. Math. Soc.},
	volume={138},
	date={2010},
	number={12},
	pages={4433--4442},
	issn={0002-9939},
	review={\MR{2680067}},
	doi={10.1090/S0002-9939-2010-10442-2},
}
\bib{Gev}{article}{
	author={Gevorkyan, G. G.},
	title={On Weyl factors for the unconditional convergence of series in the
		Franklin system},
	language={Russian},
	journal={Mat. Zametki},
	volume={41},
	date={1987},
	number={6},
	pages={789--797, 889},
	issn={0025-567X},
	review={\MR{904246}},
}

\bib{Seroj}{article}{
	author={Galstyan, S. Sh.},
	title={Convergence and unconditional convergence of Fourier series},
	language={Russian},
	journal={Dokl. Akad. Nauk},
	volume={323},
	date={1992},
	number={2},
	pages={216--218},
	issn={0869-5652},
	translation={
		journal={Russian Acad. Sci. Dokl. Math.},
		volume={45},
		date={1992},
		number={2},
		pages={286--289 (1993)},
		issn={1064-5624},
	},
	review={\MR{1191534}},
}
\bib{Kar}{article}{
	author={Karagulyan, Grigori. A.},
	title={On systems of non-overlapping Haar polynomials },
	language={English},
	journal={accepted in Arkiv for Math.},
	
}
\bib{KaSt}{book}{
	author={Ka\v{c}ma\v{z}, S.},
	author={\v{S}te{\i}ngauz, G.},
	title={Teoriya ortogonalnykh ryadov},
	language={Russian},
	publisher={Gosudarstv. Izdat. Fiz.-Mat. Lit., Moscow},
	date={1958},
	pages={507 pp. (1 insert)},
	review={\MR{0094635}},
}
\bib{KaSa}{book}{
	author={Kashin, B. S.},
	author={Saakyan, A. A.},
	title={Orthogonal series},
	series={Translations of Mathematical Monographs},
	volume={75},
	note={Translated from the Russian by Ralph P. Boas;
		Translation edited by Ben Silver},
	publisher={American Mathematical Society, Providence, RI},
	date={1989},
	pages={xii+451},
	isbn={0-8218-4527-6},
	review={\MR{1007141}},
}
\bib{Kas}{article}{
	author={Kashin, B. S.},
language={Russian},
journal={Proceedings of the International Conference on Constructive Function Theory'81},
date={1981},
pages={596},
}
\bib{Kol}{article}{
	author={Kolmogoroff, A.},
	author={Menchoff, D.},
	title={Sur la convergence des s\'{e}ries de fonctions orthogonales},
	language={French},
	journal={Math. Z.},
	volume={26},
	date={1927},
	number={1},
	pages={432--441},
	issn={0025-5874},
	review={\MR{1544864}},
	doi={10.1007/BF01475463},
}
\bib{KoRe}{article}{
		author={Koml\'{o}s, J.},
		author={R\'{e}v\'{e}sz, P.},
	title={Remark to a paper of Gaposhkin},
	journal={Acta Sci. Math. (Szeged)},
	volume={33},
	date={1972},
	pages={237--241},
	issn={0001-6969},
	review={\MR{0320617}},
}
\bib{Men}{article}{
	author={Menshov, D. E.},
	title={Sur les series de fonctions orthogonales I},
	language={Russian},
	journal={Fund. Math.},
	volume={4},
	date={1923},
	pages={82--105},
	
}
\bib{Mor}{article}{
	author={M\'{o}ricz, Ferenc},
	title={On the convergence of Fourier series in every arrangement of the
		terms},
	journal={Acta Sci. Math. (Szeged)},
	volume={31},
	date={1970},
	pages={33--41},
	issn={0001-6969},
	review={\MR{271617}},
}
\bib{Nak1}{article}{
	author={Nakata, Sabur\^{o}},
	title={On the divergence of rearranged Fourier series of square
		integrable functions},
	journal={Acta Sci. Math. (Szeged)},
	volume={32},
	date={1971},
	pages={59--70},
	issn={0001-6969},
	review={\MR{0435711}},
}
\bib{Nak2}{article}{
	author={Nakata, Sabur\^{o}},
	title={On the divergence of rearranged trigonometric series},
	journal={Tohoku Math. J. (2)},
	volume={27},
	date={1975},
	number={2},
	pages={241--246},
	issn={0040-8735},
	review={\MR{407519}},
	doi={10.2748/tmj/1178240990},
}
\bib{Nak3}{article}{
	author={Nakata, Sabur\^{o}},
	title={On the unconditional convergence of Walsh series},
	language={English, with Russian summary},
	journal={Anal. Math.},
	volume={5},
	date={1979},
	number={3},
	pages={201--205},
	issn={0133-3852},
	review={\MR{549237}},
	doi={10.1007/BF01908903},
}
\bib{Nak4}{article}{
	author={Nakata, Sabur\^{o}},
	title={On the unconditional convergence of Walsh series},
	language={English, with Russian summary},
	journal={Anal. Math.},
	volume={5},
	date={1979},
	number={3},
	pages={201--205},
	issn={0133-3852},
	review={\MR{549237}},
	doi={10.1007/BF01908903},
}
\bib{Nak5}{article}{
	author={Nakata, Sabur\^{o}},
	title={On the divergence of rearranged Walsh series},
	journal={Tohoku Math. J. (2)},
	volume={24},
	date={1972},
	pages={275--280},
	issn={0040-8735},
	review={\MR{340941}},
	doi={10.2748/tmj/1178241538},
}

\bib{NiUl}{article}{
	author={Niki\v{s}in, E. M.},
	author={Ul\cprime janov, P. L.},
	title={On absolute and unconditional convergence},
	language={Russian},
	journal={Uspehi Mat. Nauk},
	volume={22},
	date={1967},
	number={3 (135)},
	pages={240--242},
	issn={0042-1316},
	review={\MR{0212488}},
}

\bib{Ole}{article}{
	author={Olevskii, A. M.},
	title={Divergent Fourier series},
	language={Russian},
	journal={Izv. Akad. Nauk SSSR Ser. Mat.},
	volume={27},
	date={1963},
	pages={343--366},
	issn={0373-2436},
	review={\MR{0147834}},
}
\bib{Orl}{article}{
	author={Orlicz, W.},
	title={Zur Theorie der Orthogonalreihen},
	language={German},
	journal={Bull. Intern. Acad. Sci. Polon. Cracovie},
	date={1927},
	pages={81--115},
}
\bib{Pol}{article}{
	author={Pole\v{s}\v{c}uk, S. N.},
	title={On the unconditional convergence of orthogonal series},
	language={English, with Russian summary},
	journal={Anal. Math.},
	volume={7},
	date={1981},
	number={4},
	pages={265--275},
	issn={0133-3852},
	review={\MR{648491}},
	doi={10.1007/BF01908218},
}
\bib{Rad}{article}{
	author={Rademacher, Hans},
	title={Einige S\"{a}tze \"{u}ber Reihen von allgemeinen Orthogonalfunktionen},
	language={German},
	journal={Math. Ann.},
	volume={87},
	date={1922},
	number={1-2},
	pages={112--138},
	issn={0025-5831},
	review={\MR{1512104}},
	doi={10.1007/BF01458040},
}
\bib{Tan}{article}{
	author={Tandori, K\'{a}roly},
	title={\"{U}ber die orthogonalen Funktionen. X. (Unbedingte Konvergenz.)},
	language={German},
	journal={Acta Sci. Math. (Szeged)},
	volume={23},
	date={1962},
	pages={185--221},
	issn={0001-6969},
	review={\MR{0144140}},
}
\bib{Tan2}{article}{
	author={Tandori, K\'{a}roly},
	title={Beispiel der Fourierreihe einer quadratisch-integrierbaren
		Funktion, die in gewisser Anordnung ihrer Glieder \"{u}berall divergiert},
	language={German},
	journal={Acta Math. Acad. Sci. Hungar.},
	volume={15},
	date={1964},
	pages={165--173},
	issn={0001-5954},
	review={\MR{161082}},
	doi={10.1007/BF01897034},
}
\bib{Tan3}{article}{
	author={Tandori, K\'{a}roly},
	title={\"{U}ber die Divergenz der Walshschen Reihen},
	language={German},
	journal={Acta Sci. Math. (Szeged)},
	volume={27},
	date={1966},
	pages={261--263},
	issn={0001-6969},
	review={\MR{208265}},
}
\bib{Uly6}{article}{
	author={Ul\cprime yanov, P. L.},
	title={Divergent Fourier series},
	language={Russian},
	journal={Uspehi Mat. Nauk},
	volume={16},
	date={1961},
	number={3 (99)},
	pages={61--142},
	issn={0042-1316},
	review={\MR{0125398}},
}
\bib{Uly7}{article}{
	author={Ul\cprime yanov, P. L.},
	title={Divergent Fourier series of class $Lp\,(p\geq 2)$},
	journal={Soviet Math. Dokl.},
	volume={2},
	date={1961},
	pages={350--354},
	issn={0197-6788},
	review={\MR{0119026}},
}
\bib{Uly1}{article}{
	author={Ul\cprime yanov, P. L.},
	title={Weyl factors for unconditional convergence},
	language={Russian},
	journal={Mat. Sb. (N.S.)},
	volume={60 (102)},
	date={1963},
	pages={39--62},
	review={\MR{0145265}},
}
\bib{Uly2}{article}{
	author={Ul\cprime yanov, P. L.},
	title={On Haar series},
	language={Russian},
	journal={Mat. Sb. (N.S.)},
	volume={63 (105)},
	date={1964},
	pages={356--391},
	review={\MR{0161086}},
	}
	\bib{Uly3}{article}{
	author={Ul\cprime yanov, P. L.},
	title={Weyl multipliers for the unconditional convergence of orthogonal
		series},
	language={Russian},
	journal={Dokl. Akad. Nauk SSSR},
	volume={235},
	date={1977},
	number={5},
	pages={1038--1041},
	issn={0002-3264},
	review={\MR{0450886}},
}
\bib{Uly4}{article}{
	author={Ul\cprime yanov, P. L.},
	title={Exact Weyl factors for unconditional convergence},
	language={Russian},
	journal={Dokl. Akad. Nauk SSSR},
	volume={141},
	date={1961},
	pages={1048--1049},
	issn={0002-3264},
	review={\MR{0132966}},
}
\bib{Uly5}{article}{
	author={Ul\cprime yanov, P. L.},
	title={Solved and unsolved problems in the theory of trigonometric and
		orthogonal series},
	language={Russian},
	journal={Uspehi Mat. Nauk},
	volume={19},
	date={1964},
	number={1 (115)},
	pages={3--69},
	issn={0042-1316},
	review={\MR{0161085}},
}
\bib{Uly8}{article}{
	author={Ul\cprime yanov, P. L.},
	title={A. N. Kolmogorov and divergent Fourier series},
	language={Russian},
	journal={Uspekhi Mat. Nauk},
	volume={38},
	date={1983},
	number={4(232)},
	pages={51--90},
	issn={0042-1316},
	review={\MR{710115}},
}
\bib{Uly9}{article}{
	author={Ul\cprime yanov, P. L.},
	title={The work of D. E. Men\cprime shov on the theory of orthogonal series and
		its further development},
	language={Russian, with Russian summary},
	journal={Vestnik Moskov. Univ. Ser. I Mat. Mekh.},
	date={1992},
	number={4},
	pages={8--24, 101},
	issn={0579-9368},
	translation={
		journal={Moscow Univ. Math. Bull.},
		volume={47},
		date={1992},
		number={4},
		pages={8--20},
		issn={0027-1322},
	},
	review={\MR{1215456}},
}
\bib{Zag}{article}{
	author={Zahorski, Zygmunt},
	title={Une s\'{e}rie de Fourier permut\'{e}e d'une fonction de classe $L^{2}$
		divergente presque partout},
	language={French},
	journal={C. R. Acad. Sci. Paris},
	volume={251},
	date={1960},
	pages={501--503},
	issn={0001-4036},
	review={\MR{147833}},
}
\bib{Zyg}{book}{
	author={Zygmund, A.},
	title={Trigonometric series: Vol. II},
	series={Second edition, reprinted with corrections and some additions},
	publisher={Cambridge University Press, London-New York},
	date={1968},
	review={\MR{0236587}},
}
\end{biblist}
\end{bibdiv}

\end{document}